\newcommand{\mc}[1]{\mathcal{#1}}
\newcommand{\dtnot}{F_{t_0}} 
\newcommand{\xil}{\xi_{\lambda}} 
\newcommand{\rnotm}{R_0[M]}
\newcommand{\dtt}{F_b} 
\newcommand{\maxn}{\mfr^n}
\newcommand{\maxnp}{\mfr^{n+1}}
\newcommand{\vnot}{v_0}
\newcommand{\rmp}{R_0[M']}
\newcommand{\rmpp}{R_0[M'']}
\newcommand{\dvnot}{D_{v_0}} 
\newcommand{\otpr}{\mc{O}_{T'}}
\newcommand{\wh}[1]{\widehat{#1}}
\newcommand{\msj}{\mathscr{J}}
\newcommand{\obv}{\mc{O}b_v}  
\newcommand{\henss}{\mc{O}_{S,s}^h}
\newcommand{\frk}[1]{\mathfrak{m}_{#1}}
\newcommand{\on}[1]{\operatorname{#1}}
\newcommand{\imp}{\on{im}(\pi)}
\newcommand{\got}{\Gamma(\ot)} 
\newcommand{\oto}{\mc{O}_{T_0} } 
\newcommand{\ovmn}{\ov{\phi}_{m,n}} 
\newcommand{\goto}{ \Gamma(\mc{O}_{T_0})} 
\newcommand{\whov}{\wh{\mc{O}}_{V,v}}
\newcommand{\red}{\on{red}}
\newcommand{\ared}{A_{0}} 
\newcommand{\ov}[1]{\overline{#1}}
\newcommand{\otr}{\mc{O}_{\terd}} 
\newcommand{\ovv}{\mc{O}_{V,v}}
\newcommand{\z}{\mathbb{Z}}
\newcommand{\bos}{\on{bos}}
\newcommand{\ot}{\mc{O}_T}
\newcommand{\whxx}{\wh{\mxx}}
\newcommand{\omxx}{\ov{\mxx}}
\newcommand{\tura}{\frak{a}} 
\newcommand{\derp}{\on{Der}} 
\newcommand{\muu}{\frak{m}_u}
\newcommand{\mr}{\frk{R}}
\newcommand{\derm}{\on{Der}(R, M)}
\newcommand{\cder}{\mc{D}er} 
\newcommand{\whxi}{\wh{\xi}} 
\newcommand{\rtwid}{\tilde{R}}
\newcommand{\mxx}{\mc{X}}
\newcommand{\sdual}{k[\epsilon, \eta]}
\newcommand{\dual}{k[\epsilon]}
\newcommand{\dodd}{k[\eta]} 
\newcommand{\faka}{\frk{A}}
\newcommand{\jj}{\mc{J}}
\newcommand{\fmb}{\frak{m}_B}
\newcommand{\jar}{\mc{J}_R}
\newcommand{\tant}{t_{\omxx}} 
\newcommand{\plutan}{\tant^+}
\newcommand{\negtan}{\tant^-} 
\newcommand{\tanb}{t_B^*}
\newcommand{\tana}{t_A^*}
\newcommand{\spe}{\on{Spec}}
\newcommand{\mfr}{\mathfrak{m}}
\newcommand{\hr}{\hookrightarrow}
\newcommand{\tas}{t_A^*}
\newcommand{\bas}{t_B^*}
\newcommand{\mm}{\mc{M}}
\newcommand{\terd}{T_{\on{red}}} 
\newcommand{\ex}{\on{Ex}} 
\newcommand{\exc}{\mc{E}x} 
\newcommand{\oss}{\mc{O}_{S,s}}
\newcommand{\whos}{\wh{\mc{O}}_{S,s}}
\newcommand{\blam}{B_{\lambda}} 
\newcommand{\lgr}{\overset{\sim}{\longrightarrow}}
\newcommand{\spf}{\on{Spf}}
\newcommand{\sS}{\on{sAffSch}} 
\newtheorem{theorem}{Theorem}[section]
\newtheorem{lemma}[theorem]{Lemma}
\newtheorem{proposition}[theorem]{Proposition}
\newtheorem{corollary}[theorem]{Corollary}
\newtheorem{definition}[theorem]{Definition}
\newtheorem{example}[theorem]{Example}
\newtheorem{remark}[theorem]{Remark}
\title{Artin's theorems in supergeometry} 
\author{Nadia Ott\footnote{\emph{Email address}: \href{mailto:ottnadia@sas.upenn.edu}{ottnadia@sas.upenn.edu}\\ \indent University of Pennsylvania, Philadelphia, PA}}
\begin{document}

\maketitle

\begin{abstract}
We generalize  Artin's three main algebraicity theorems to 
the setting of supergeometry:  approximation \cite{artin1969algebraic},  algebraization of formal deformations \cite{artin1969algebraization}, and algebraization of stacks \cite{artin1974versal}. 
 
\end{abstract}

\setcounter{tocdepth}{1}
\tableofcontents

\section{Introduction}

Artin's theorems on approximation \cite{artin1969algebraic}, 
algebraization of formal
deformations \cite{artin1969algebraization}, and 
stacks \cite{artin1974versal} give general criteria for
functors to be, in various senses, described by algebraic objects.  It has long been expected that 
analogous results hold in supergeometry; for instance, this was indicated
in Deligne's sketch construction of the moduli space of stable super
Riemann surfaces \cite{deligne1987letter}.  
Some works have made progress in this direction 
\cite{vaintrob1990deformation, flenner1992analytic, moosavian2019existence}, and the existence of the moduli space of stable super Riemann surfaces has since been established by other means \cite{moosavian2019existence, felder2020moduli}. 
However, proofs of the full suite of Artin theorems have remained 
absent from the supergeometry literature.\footnote{One reason 
for this absence may be 
the sense that establishing the Artin theorems in supergeometry
would require the tedious repetition of various difficult
arguments in commutative algebra, deformation theory, and algebraic geometry, e.g. for the original approach
\cite{schlessinger1968functors, rim1972formal, 
neron1964modeles, artin1969algebraic, artin1969algebraization, artin1974versal}
(with some of these substituteable by later approaches, 
see e.g. 
\cite{flenner1981kriterium, popescu1986general, 
conrad2002approximation, hall2017openness, hall2019artin}). 
In any case one can find in the literature various assertions
that certain results hold in supergeometry ``by the same argument as''
something in these works.} Here we fill this gap by proving the Artin theorems.

We restrict ourselves
to working over $\spe \mathbb{C}$ 
due to the needs of the subject 
and the comfort of the author. 
We provide complete
proofs, adhering to the principle that we are happy to cite 
{\em statements} in the (bosonic) algebraic geometry literature, 
but not {\em proofs}: if an argument must be repeated with 
the word `super' added, we repeat the argument.\footnote{Usually
with some details added and of 
course with reference and credit given to the original source.}
This is done so that we can verify that the various foundational
results needed to run these arguments are in fact known
in supergeometry (and so
the supergeometer need not be referred to various other works
and verify at every
stage that they still hold with odd coordinates).  Fortunately,
not all arguments are such repetitions: at several
key points we are 
able to reduce to the (known) bosonic case by an argument 
which is significantly simpler 
than the original bosonic argument.  This is 
possible because superschemes are nilpotent thickenings
of bosonic schemes, so the difference between them  does
not involve the difficult `convergence' questions which are often
at the heart of the Artin theorems.

We now describe the results proven in this article. 
Recall that a superscheme 
$X$ gives rise to a functor from 
affine superschemes to sets by 
$X(S) := \on{Hom}(S, X)$; such a functor is called 
{\em representable} and necessarily satisfies \'etale descent.
More generally, a superstack is an
arbitrary 2-functor from the category of 
affine superschemes to the category of groupoids, satisfying \'etale descent. 
(In the body of the text we will as usual
pass to the 1-categorical formulation in terms
of categories fibered in groupoids; 
see Def. \ref{def: superstack}.)

We say a superstack 
is limit preserving if, for a directed
systems of algebras, 
$\varinjlim \mxx( \spe A_i) \xrightarrow{\sim} \mxx (\spe \varinjlim A_i)$. 
As all rings are 
direct limits of its finitely generated sub-rings, 
the `limit preserving' condition ensures 
superstacks are
determined by their values over a finite type superschemes.

We now state the Artin theorems.  
The first -- Artin approximation -- 
asserts that object associated
to the completion of a local
ring of a finite type superscheme
are approximated to any fixed
finite order by some  object
over some finite type 
\'etale cover, depending on the order. 

\begin{theorem}[Artin approximation] \label{thm: intro approximation} Let $S$ be finite type affine superscheme and let $\mxx$ be a limit-preserving superstack. Let $s$ be a closed point in $S$ and let $\wh{\xi} \in \mxx(\wh{\mc{O}}_{S,s})$.  Then for any integer $n \ge 1$, there exists an \'etale morphism $(S', s') \to (S,s): s' \mapsto s$ and an object $\xi \in \mxx(S')$ such that  $\wh{\xi} \vert_{\spe (\mc{O}_{S,s}/\frk{s}^{n})} $ and $ \xi \vert_{\spe(\mc{O}_{S',s'}/\frk{s'}^{n})}$ are isomorphic as objects in $\mxx$ over $\spe (\mc{O}_{S,s}/\frk{s}^{n}) \cong \spe(\mc{O}_{S',s'}/\frk{s'}^{n})$.  
\end{theorem}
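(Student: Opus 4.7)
The plan is to reduce the theorem to the classical (bosonic) Artin approximation theorem of \cite{artin1969algebraic}, exploiting the fact that a finite-type superscheme is a nilpotent thickening of its bosonic reduction involving only finitely many odd generators. Since the Henselization $\henss$ is the filtered colimit of coordinate rings of \'etale neighborhoods of $(S,s)$, the limit-preserving hypothesis on $\mxx$ gives $\mxx(\spe \henss) = \varinjlim \mxx(\spe \mc{O}_{S',s'})$, so it suffices to produce $\xi \in \mxx(\spe \henss)$ whose restriction modulo $\frk{s}^n$ agrees with that of $\wh{\xi}$.

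I then descend $\wh{\xi}$ one step further: viewing $\wh{\mc{O}}_{S,s}$ as the filtered colimit of its finitely generated super $\henss$-subalgebras and using limit-preservation again, $\wh{\xi}$ comes from an object $\xi_A$ over some such subalgebra $A \subset \wh{\mc{O}}_{S,s}$. Presenting $A$ as $\henss[x_1, \ldots, x_p, \theta_1, \ldots, \theta_r]/(f_1, \ldots, f_q)$ with $x_i$ even and $\theta_j$ odd, the inclusion $A \hookrightarrow \wh{\mc{O}}_{S,s}$ is recorded as a formal solution $(\wh{a}, \wh{\theta})$ of the super polynomial system $\{f_i = 0\}$. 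If I can approximate $(\wh{a}, \wh{\theta})$ to order $n$ by a solution $(a, \theta)$ over $\henss$, then the corresponding evaluation $A \to \henss$ pulls $\xi_A$ back to the required $\xi$.

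The technical heart is therefore a super analogue of Artin's approximation for polynomial systems over the Henselian super local ring $\henss$. To prove it, fix a finite set $\tau_1, \ldots, \tau_N$ of odd generators of the odd part of $\henss$ over its bosonic part $(\henss)_{\bos}$, available because the odd sheaf of a finite-type superscheme is coherent and finite generation is preserved by Henselization and completion; the same generators serve for $\wh{\mc{O}}_{S,s}$. Writing each super unknown as a linear combination of super monomials in the $\tau_k$ with new bosonic unknown coefficients, and then collecting coefficients of $\tau$-monomials in each equation $f_i = 0$, converts the super system into a purely bosonic system of polynomial equations over $(\henss)_{\bos}$; the formal solution $(\wh{a}, \wh{\theta})$ produces a corresponding formal bosonic solution over $(\wh{\mc{O}}_{S,s})_{\bos}$, and classical Artin approximation supplies an approximation which reassembles into the desired super solution.

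I expect the main obstacle to be the bookkeeping in this last step: one must verify that finite generation of the odd sheaf genuinely propagates from $\mc{O}_{S,s}$ through both its Henselization and its completion, that the $\tau_k$ can be chosen compatibly across all three rings so the expansion of super unknowns is well defined, and that an $n$th-order bosonic congruence in the coefficients translates to an $n$th-order super congruence in the original unknowns---possibly at the cost of replacing $n$ by $n + c$ for a constant $c$ depending on the $\frk{s}$-degrees of the $\tau_k$, absorbed at the outset.
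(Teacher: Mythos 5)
Your proposal takes a genuinely different route from the paper. The paper follows Conrad--de Jong: it invokes the super N\'eron--Popescu theorem (Theorem \ref{pop}, reduced to the bosonic Popescu theorem by a flat base-change argument) to write $\whos = \varinjlim \blam$ with $\blam$ \emph{smooth} over $\oss$, descends $\whxi$ to some $\blam$ by limit-preservation, and then cuts the required \'etale neighborhood $S'$ out of $\spe\blam$ as the preimage of a section of $\mathbb{A}^{n|m}_{\oss}$ constructed from the formal point; no polynomial systems or odd generators ever appear. You instead follow Artin's original 1969 strategy: descend $\whxi$ to a finitely \emph{presented} $\henss$-subalgebra $A\subset\whos$, read the inclusion as a formal solution of a super polynomial system, and approximate that solution over $\henss$. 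The global architecture of your argument (the two uses of limit-preservation, the identification of the two pullbacks of $\xi_A$ modulo $\mfr_s^n$) is correct and, if the equational approximation lemma were in hand, would prove the theorem. What Popescu buys the paper is precisely that it never has to prove that lemma.

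The gap is in your reduction of the super system to a bosonic one, and as written the step fails for two reasons. First, the odd part $(\henss)^-$ is only a coherent $(\henss)^+$-module, not free, so the super monomials $\tau^I$ are not linearly independent: the vanishing of $\sum_J g_J\,\tau^J$ is \emph{not} equivalent to the vanishing of all coefficients $g_J$, so ``collecting coefficients'' does not yield an equivalent system. One must fix a presentation of the relevant modules and introduce auxiliary unknowns witnessing that the coefficient vector lies in the relation submodule. Second, and more seriously, you place the new unknowns in $(\henss)_{\bos}=\henss/\jj$; but the surjection $(\henss)^+\to(\henss)_{\bos}$ admits no ring section in general (even nilpotents such as $\tau_k\tau_l$ obstruct it), so a solution with coefficients in $(\henss)_{\bos}$ cannot be reassembled into an element of $\henss$, and conversely the even unknowns lose the $\jj^2$-information when pushed to $(\henss)_{\bos}$. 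The repair is to run the whole reduction over the \emph{even part} $(\henss)^+=\mc{O}_{S^+,s}^h$, which is the Henselization of a local ring of an honest finite-type $k$-scheme and hence receives classical Artin approximation: even unknowns are valued directly in $(\henss)^+$ with no expansion, only odd unknowns are expanded over chosen generators of $(\henss)^-$, and relations are handled by the auxiliary unknowns above. The compatibility you need to transport the formal solution, namely $(\whos)^-\cong(\henss)^-\otimes_{(\henss)^+}(\whos)^+$ with the same relation module, follows from flatness of completion (this is exactly the cartesian-square lemma of \cite{moosavian2019existence} quoted before Theorem \ref{pop}). With those corrections your argument goes through; the congruence bookkeeping at the end costs nothing, since coefficients agreeing modulo $(\mfr_s^+)^n$ already force the assembled super elements to agree modulo $\mfr_s^n$.
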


The second -- Artin algebraization -- shows
that if we require the initial data $\whxi$ to be \emph{formally versal}, then 
a far stronger conclusion holds: One can
find a single cover and a single algebraic object 
which approximates the original object \emph{up to any order.}  
We recall that an object $\whxi$ is called formally versal if it satisfies the following infinitesimal lifting property: For any surjection of Artin superalgebras, $B' \to B$, and map $R \to B$, there exists an arrow completing the diagram 
\begin{equation} \label{formallyversallift} 
\begin{tikzcd} 
\spe B \arrow[d] \arrow[r]                  & \spe R \arrow[d, "\whxi"] \\
\spe B' \arrow[r] \arrow[ru, dotted] & \mxx                     
\end{tikzcd}
\end{equation}
Note here that for any superscheme $\spe A$ the map  $\spe A \to \mxx$
is by definition a map of functors 
$\hom(\, \cdot \,, \spe A) \to \mxx$, which by
2-Yoneda is identified with an object in $\mxx(\spe A)$. 

\begin{theorem}[Artin algebraization] \label{thm: intro algebraization}  Let $\mxx$ be a limit-preserving superstack and let $(R, \mr)$ be a local, complete, Noetherian superalgebra. Let $\whxi \in \mxx(\spe R)$ be formally versal. Then there exists 
\begin{enumerate}
    \item an affine finite type superscheme $\spe A$ and a closed point $u \in \spe A$, 
    \item an object $\xi_A \in \mxx(\spe A)$, 
    \item an isomorphism $\alpha: R \lgr \wh{A}_{\frk{u}}$ of superalgebras, and
    \item a compatible family of isomorphisms $\wh{\xi} \vert_{\spe(R/\frk{R}^n)} \cong \xi_A \vert_{\spe(A_{\frk{u}}/\frk{u}^n)}$ in $\mxx$ over $R/\mr^{n} \cong A_{\frk{u}}/\frk{u}^{n}$ for all $n \ge 1$. 
\end{enumerate}
\end{theorem}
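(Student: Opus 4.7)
The plan is to reduce to the bosonic Artin algebraization theorem of \cite{artin1969algebraization} by exploiting the fact that $R$ is a finite nilpotent extension of its bosonic truncation $R_\bos = R/\langle R_1\rangle$, and then to upgrade via formal versality of $\whxi$ together with the super Artin approximation established as Theorem \ref{thm: intro approximation}.

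First, let $\whxi_\bos$ denote the restriction of $\whxi$ to $\spe R_\bos$. Since every surjection of bosonic Artin rings is in particular a surjection of Artin superalgebras, formal versality of $\whxi$ automatically implies formal versality of $\whxi_\bos$, viewed as an object of $\mxx$ restricted to bosonic test schemes (a restriction which satisfies the hypotheses of bosonic algebraization by the bosonic restrictions of the hypotheses on $\mxx$). Applying bosonic Artin algebraization to $(R_\bos, \whxi_\bos)$ then yields a finite type bosonic affine scheme $\spe A_\bos$ with closed point $u_\bos$, an object $\xi_{A,\bos} \in \mxx(\spe A_\bos)$, an isomorphism $\alpha_\bos : R_\bos \lgr \wh{A}_{\bos, u_\bos}$, and a compatible family of isomorphisms of restrictions modulo powers of the maximal ideal.

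Next, I would extend $A_\bos$ to a finite type super ring $A$ with $\wh{A}_u \cong R$ at a closed point $u$ above $u_\bos$. Since $R_1$ is a finitely generated $R_\bos$-module, the super-algebra structure of $R$ over $R_\bos$ is encoded in finitely many odd generators $\theta_1, \ldots, \theta_q$ together with relations in $R_\bos$; using density of $A_\bos$ in $\wh{A}_{\bos, u_\bos}$ (and possibly an étale refinement), these relations transfer to $A_\bos$ itself, producing the super ring $A$. The inclusion $A_\bos \hookrightarrow A$ is then a finite nilpotent thickening by the odd ideal $J \subset A$, with $J^{q+1} = 0$.

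Finally, formal versality of $\whxi$, applied stage by stage through the finite filtration $A_\bos \hookrightarrow A/J^2 \hookrightarrow \cdots \hookrightarrow A/J^{q+1} = A$, produces a formal lift $\wh{\xi}_A \in \mxx(\spe \wh{A}_u)$ compatible with $\whxi$ under the identification $\wh{A}_u \cong R$; applying Theorem \ref{thm: intro approximation} algebraizes this to the desired $\xi_A \in \mxx(\spe A)$ on a possibly refined étale neighborhood. The hard part will be this last step, namely upgrading the single-order agreement provided by Theorem \ref{thm: intro approximation} to compatibility at \emph{all} orders $n \ge 1$. This is handled inductively using formal versality of $\whxi$: at each order, the obstruction to refining an isomorphism modulo $\mr^n$ to one modulo $\mr^{n+1}$ vanishes at the formal level and can be algebraized by a further application of Theorem \ref{thm: intro approximation}. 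Because the extension of $R_\bos$ by odd elements has finite depth (bounded by $q+1$), the super portion of the argument is strictly finite-dimensional and introduces no new convergence difficulty beyond the bosonic case.
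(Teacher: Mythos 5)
There are genuine gaps here, concentrated in your second and third steps. First, the step ``extend $A_{\bos}$ to a finite type super ring $A$ with $\wh{A}_u \cong R$'' is not well-posed as described: $R_{\bos}$ is a \emph{quotient} of $R$, not a subring, so $R$ is not an $R_{\bos}$-algebra and is not presented as ``$R_{\bos}$ plus odd generators and relations over $R_{\bos}$''; it is an iterated square-zero extension of $R_{\bos}$ by the pieces of the filtration $\jar \supset \jar^2 \supset \cdots$, with nontrivial extension data. Even granting some presentation with power-series coefficients, ``density of $A_{\bos}$ in $\wh{A}_{\bos,u_{\bos}}$'' only lets you approximate those coefficients to a \emph{finite} order, and the entire difficulty of algebraization is precisely to show that the finite-order approximation has completion isomorphic to $R$ \emph{on the nose}. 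You give no mechanism for this. The paper's mechanism is: apply the finite-order Conrad--de Jong approximation (Theorem \ref{CDJ}) at order $N=r$, where $r$ is the odd nilpotence degree of $\jar$; then use formal versality of $\whxi$ against the Artin surjections $A/\mfr_u^{n+1}\to A/\mfr_u^n$ to assemble a single ring map $\wh{\alpha}\colon R\to \wh{A}_{\mfr_u}$; prove $\wh{\alpha}$ surjective via cotangent spaces (Lemma \ref{surjcot}) and injective by reducing to the bosonic Artin algebraization together with $\ker(\wh{\alpha})\subset\jar$ and $\jar^{r+1}=0$. That single map $\alpha$ is what produces the compatible family of isomorphisms at all orders simultaneously.

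Second, your proposed fixes for the remaining steps do not work as stated. Lifting ``stage by stage through $A_{\bos}\hookrightarrow A/J^2\hookrightarrow\cdots\hookrightarrow A$'' invokes the lifting property against surjections of \emph{non-Artinian} rings; that is versality (Theorem \ref{versality}), not formal versality, and it requires the extra hypotheses (ii)--(iii) of that theorem, which are not available in the statement of Artin algebraization. And the plan to ``upgrade single-order agreement to all orders by a further application of Theorem \ref{thm: intro approximation} at each order'' cannot succeed: each application of Artin approximation produces a possibly different \'etale neighborhood and a different object, so iterating it never yields one fixed $\xi_A$ agreeing with $\whxi$ at every order. The all-orders statement must come from a single isomorphism $R\lgr\wh{A}_{\mfr_u}$, which is exactly the piece your argument omits.
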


Formal versality is about what happens over Artin rings and so we can study it using deformation theory. 
When using results from deformation theory we pass to the functor $\omxx_x$ over the category $C$ of local, Artin superalgebras parameterizing isomorphism classes of \emph{extensions} of a fixed object $x \in \mxx(\spe k)$. For a given morphism $\spe k \to S$, we say that $y \in \mxx(S)$ is an extension of $x$ over $S$ if there is an arrow $x \to y$ in $\mxx$ over $\spe k \to S$. 
In general, it not possible to determine formal versality outright; and so we pass to the functor $\wh{\omxx}_x$ sending a complete, local, Noetherian superalgebra $R$ to the set
\[ \wh{\omxx}_x(\spf R) := \varprojlim \omxx_x(\spe R/\mr^n).  \]   
of \emph{formal objects} over the formal superscheme $\spf R$. 

We will now state
Schlessinger's theorem \cite[Theorem 2.11]{schlessinger1968functors} (which predated and motivated Artin's work)  on the existence of formally versal, formal moduli.

. 



\begin{theorem}[Schlessinger's theorem] \label{intro: Schlessinger} Let $A' \to A$ and $A'' \to A$ be morphisms in $C$ and consider the map 
\begin{equation} \label{SM} \omxx_x( \spe A' \bigsqcup_{\spe A} \spe A'') \to \omxx_x( \spe A') \times_{\omxx_x(\spe A)} \omxx_x(\spe A'') \end{equation} 
Then there exists a complete, local, Noetherian superalgebra $R$ and a formally versal, formal object $\whxi \in \wh{\omxx}_x(\on{Spf} R)$ if and only if the following conditions hold.
\begin{itemize}
    \item[\emph{(S1)a:}] \eqref{SM} is a surjection whenever $A'' \to A$ is a small extension. 
    
    \item[\emph{(S1)b:}] \eqref{SM} is a bijection whenever $A=k, A''=\sdual$ . 
    
    \item[\emph{(S2):}] $\omxx_x$ has a finite-dimensional tangent space. 
\end{itemize}
\end{theorem}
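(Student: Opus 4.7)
The strategy is to follow the classical argument of Schlessinger, adapted to the $\mathbb{Z}/2$-graded setting, with the minimal additions needed to handle odd generators. Necessity is essentially formal: given a formally versal $\whxi \in \wh{\omxx}_x(\spf R)$, condition (S2) reduces to the finite-dimensionality of $\frk{R}/\frk{R}^2$ together with the fact that versality implies the tangent map $\hom(R, \sdual) \to \omxx_x(\spe \sdual)$ is surjective; (S1)a follows by lifting compatible pairs of maps $R \to A'$, $R \to A''$ out of a single map $R \to A' \times_A A''$, using that $R$ is pro-represented; (S1)b holds because $\hom(R, -)$ itself sends the fiber product $\spe k \sqcup_{\spe k} \spe \sdual = \spe \sdual$ to a bijection.

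For sufficiency, I would build $R$ as an inverse limit of successive Artinian quotients. First, invoke (S1)b (with $A = k$, $A'' = \sdual$) to endow $\tana := \omxx_x(\spe \sdual)$ with a well-defined super $k$-vector space structure: the two maps $\sdual \to \dual$ and $\sdual \to \dodd$ decompose $\tana = \tana^+ \oplus \tana^-$, with addition and scalar multiplication coming from the fiber-product bijection. By (S2) each summand is finite-dimensional. Choose a dual basis and let
\[ R_0 := k[[x_1, \dots, x_p \, | \, \eta_1, \dots, \eta_q]], \]
so that $R_0$ surjects onto every Artin superalgebra whose cotangent space has dimension at most $(p|q)$. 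Next, inductively define decreasing ideals $J_n \subset R_0$ with $\mr_0^{n+1} \subseteq J_n$ and compatible elements $\xi_n \in \omxx_x(\spe R_0/J_n)$ as follows: at stage $n$, consider the collection of all pairs $(J, \xi)$ with $\mr_0^{n+1} \subseteq J \subseteq J_{n-1}$ and $\xi \in \omxx_x(\spe R_0/J)$ extending $\xi_{n-1}$. Using (S1)a repeatedly for the small extensions $R_0/(J \cap J') \twoheadrightarrow R_0/J$, one shows that the intersection $J_n$ of all such $J$ still carries an extension $\xi_n$. (The only thing to note here is that ``small extension'' in the super setting must be allowed to have either even or odd one-dimensional kernel; both cases are covered by (S1)a applied to the appropriate square-zero surjection.)

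Set $R := \varprojlim R_0/J_n$ and $\whxi := (\xi_n) \in \wh{\omxx}_x(\spf R)$. To verify formal versality, given a small extension $B' \twoheadrightarrow B$ in $C$ and a morphism $R \to B$ compatible with an object $\omxx_x(\spe B')$, one reduces to a concrete lifting problem for maps $R_0 \to B'$: the given $R \to B$ factors through some $R_0/J_n$, and an obstruction/extension computation, using (S1)a again and the minimality built into the definition of the $J_n$, produces the required lift. The main obstacle, and the only point that is not a literal rerun of Schlessinger, is ensuring that the super vector space structure on $\tana$ from (S1)b interacts correctly with the inductive construction when odd generators $\eta_j$ are introduced: one must check that small extensions with one-dimensional \emph{odd} kernel behave identically to those with even kernel under the obstruction analysis, which amounts to verifying that $\eta_j^2 = 0$ is already built into the definition of a superalgebra and does not impose an extra compatibility constraint. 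Once this is settled, the inductive step, the passage to the limit, and the verification of versality proceed exactly as in the bosonic proof.
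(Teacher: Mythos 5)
Your proposal takes essentially the same route as the paper's proof (Appendix \ref{schlessinger proof appendix}): form the power series superalgebra on a dual basis of the even and odd tangent spaces, inductively extract minimal ideals $J_n$ using (S1)a and closure under pairwise intersection via $S/(J\cap K)\cong S/J\times_{S/J_{n-1}}S/K$, pass to the limit, and verify versality from the minimality of the $J_n$ (the paper makes this last step precise via the lemma on essential small extensions). The only differences are cosmetic: the paper writes out only the sufficiency direction, and it takes the admissible ideals to satisfy $\mathfrak{m}_S J_{q}\subseteq J\subseteq J_{q}$ rather than your $\mathfrak{m}^{n+1}\subseteq J\subseteq J_{n-1}$, both of which work.
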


The formal object $\whxi$ is a sequence $(x=\whxi_0, \whxi_1, \cdots), \ \whxi_n \in \omxx_x(\spe R/\mr^n)$ of extensions of $x$. It is useful to view a formal object as a  `power series'; the first order term of which is an object in the tangent space to $\omxx_x$. The tangent space to $\omxx_x$
is
$\omxx_x(\mathrm{Spec}(\sdual))$, where $\sdual$ denotes the \emph{super dual numbers}.  
Any finite order extension of $x$ is built from iterating the first order extension and we can use this fact to approximate the extensions of $x$ up to finite order. 
A formal object is what we get when we take this process to the limit. 

This leaves open the important question of convergence. We say that the formal object is {\em effective}
if it is contained in the image of the natural map 
 \begin{equation} \label{effmap} \mxx( \spe R) \to \varprojlim \mxx( \spe R/\frk{R}^n). \end{equation}
 We call the superstack $\mxx$ effective if \eqref{effmap} is an equivalence of categories.
 \begin{remark}
 Effectivity will be a hypothesis in Artin's third theorem on algebraicity of stacks.  Often, effectivity can be checked using the Grothendieck existence theorem; a super version of which is proved in \emph{\cite{moosavian2019existence}}. 
 \end{remark}

While the preceding results have been of a 
local nature, the third Artin theorem is
a global result.  
To state it, let us recall that
an algebraic space is a quotient of a scheme
by an \'etale equivalence relation, and 
that a map of stacks 
$f: \mxx \to \mathcal{Y}$ is said 
to be representable by a scheme
(space) if for any $B \to \mathcal{Y}$, 
one has that $\mxx \times_\mathcal{Y} B$
is a scheme (space).  Properties of maps
of schemes which are stable under 
base change, such as smoothness, 
give rise to properties of representable
morphisms by demanding all base changes
satisfy the original property. 
Representability
of the diagonal $ \Delta: \mxx \to \mxx \times \mxx$ implies that {\em any} map from a scheme
is representable, and so in particular
we may always ask whether
a map from a scheme is smooth, \'etale, etc.

As in \cite{artin1974versal},
we say:

\begin{definition} A superstack $\mxx$ is an algebraic superstack if the following conditions hold: 
\begin{itemize}
    \item[\emph{(1)}] The diagonal morphism
    $ \Delta: \mxx \to \mxx \times \mxx$
    is representable by algebraic superspaces.   
    \item[\emph{(2)}] There exists a smooth, surjective morphism $\pi: U \to \mxx$ with $U$ a superscheme. 
\end{itemize}
\end{definition}

We say that the algebraic superstack $\mxx$ is locally of finite type if there exists a smooth surjection $\pi: U \to \mxx$ with $U$ a  superscheme locally of finite type. 


From   
Schlessinger's theorem  and 
Artin algebraization,
we deduce the (not yet useful) algebraicity criterion:

\begin{corollary} \label{intro: corollary not useful} $\mxx$ is an algebraic superstack locally of finite type if and only if 
\begin{enumerate}
    \item The Schlessinger conditions \emph{(S1)} and \emph{(S2)} are satisfied.
    
    \item Effectivity: If $A$ is a complete, local, Noetherian superalgebra, then the natural map 
    \[  \mxx( \spe A) \to \varprojlim \mxx( \spe A/\frk{A}^n)\] 
   is an equivalence of categories. 
   \item $\mxx$ is limit preserving. 
    \item The diagonal map $\Delta: \mxx \to \mxx \times \mxx$ is representable. 
    \item The maps $\xi_R: (\spe R, u) \to \mxx$ obtained 
    by applying Artin algebraization to each of the formally
    versal, formal objects produced in Schlessinger's theorem are smooth in some neighborhood of $u$.
\end{enumerate}
\end{corollary}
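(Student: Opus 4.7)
We handle the two implications separately. For the forward direction (algebraic $\Rightarrow$ (1)--(5)), we fix a smooth surjection $\pi : U \to \mxx$ with $U$ a superscheme locally of finite type. Condition (4) is immediate from the definition. Limit preservation (3) descends from $U$ along $\pi$. For the Schlessinger conditions, given $x \in \mxx(\spe k)$, smoothness of $\pi$ produces a lift $\tilde x \in U(\spe k)$, and $\omxx_x$ is then smoothly covered by the deformation functor of $\tilde x$ on the superscheme $U$, reducing (S1) and (S2) to the classical finite type superscheme case. Effectivity (2) follows from the infinitesimal lifting property of smooth morphisms (used to extend formal objects order-by-order along $\pi$) combined with the super Grothendieck existence theorem of \cite{moosavian2019existence}. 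For (5), the Artin algebraization of a formally versal object at a point of $\mxx$ can be lifted along $\pi$ to an \'etale neighborhood in $U$, whose composition with $\pi$ is smooth.

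For the reverse direction, we build a smooth atlas. For each equivalence class of closed point $x \in \mxx(\spe k)$, Schlessinger's theorem applied using (1) produces a complete, local, Noetherian superalgebra $R_x$ and a formally versal $\whxi_x \in \wh{\omxx}_x(\spf R_x)$. Effectivity (2) upgrades $\whxi_x$ to an honest $\xi_x \in \mxx(\spe R_x)$, and Artin algebraization (Theorem \ref{thm: intro algebraization}) then yields an affine finite type superscheme $\spe A_x$, a closed point $u_x$, an isomorphism $R_x \cong \wh{(A_x)}_{u_x}$, and a map $\xi_{A_x} : \spe A_x \to \mxx$ whose completion at $u_x$ recovers $\xi_x$. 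Condition (4) makes $\xi_{A_x}$ representable, so hypothesis (5) produces an open $V_x \ni u_x$ on which $\xi_{A_x}|_{V_x}$ is smooth. Setting $U := \bigsqcup_x V_x$ yields a smooth morphism $U \to \mxx$ from a superscheme locally of finite type.

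The main obstacle is showing that $U \to \mxx$ is surjective; this is where Artin approximation (Theorem \ref{thm: intro approximation}) is indispensable. Smoothness of each $\xi_{A_x}|_{V_x}$, together with the representability guaranteed by (4), implies that its image is an open substack of $\mxx$. Given any geometric point $y : \spe k \to \mxx$, we would use limit preservation (3) to descend to a finite type test setting and then apply Theorem \ref{thm: intro approximation} to the formal neighborhood of $y$: approximation produces an \'etale neighborhood of some finite type base point carrying an object matching $y$ to sufficient order, and by smoothness of $\xi_{A_x}$ this is enough to lift $y$ into some $V_x$ whose image in $\mxx$ lies in the same residue class. The delicate point is ensuring that indexing the construction by closed points is truly sufficient rather than merely covering a closed substack; this is resolved by combining approximation (which brings arbitrary points into the formal orbit of a closed point) with the openness of smooth images, after which a topological argument on each finite type chart yields surjectivity.
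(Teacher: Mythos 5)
Your proof of the ``if'' direction (conditions $\Rightarrow$ algebraic) is the same as the paper's: Schlessinger plus effectivity produce a formally versal object over a complete local superalgebra, Artin algebraization turns it into a finite type chart, condition (4) gives representability, and hypothesis (5) gives smoothness near $u$; taking the disjoint union over all $x \in \mxx(\spe k)$ gives the atlas. Two comments. First, your surjectivity discussion is more elaborate than it needs to be, and the appeal to Artin approximation there is a red herring: since the construction is indexed over \emph{every} $x \in \mxx(\spe k)$ and the chart at $x$ hits $x$ (the formal object begins with $\whxi_0 = x$), every $k$-point of $\mxx$ lies in the image of $U \to \mxx$; combined with openness of smooth representable morphisms and the fact that finite type test schemes are Jacobson, surjectivity follows with no further input. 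Approximation does not ``bring arbitrary points into the formal orbit of a closed point,'' and the paper does not use it here. Second, the paper explicitly omits the ``only if'' direction (it is not used later), offering only a remark on why the algebraization maps must be smooth --- namely that, after base change to finite type schemes, formal versality at $u$ gives the Artinian lifting property, which for finite type maps is equivalent to smoothness at $u$. Your sketch of that direction (descending (S1), (S2), effectivity, and limit preservation from a smooth atlas) is the standard argument and plausible, but it is at the same level of detail as the paper's remark rather than a complete proof; in particular the claim that $\spe A \to \mxx$ factors through $U$ \'etale-locally would itself need the versality comparison you are implicitly invoking.
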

\begin{proof}
Note first that by effectivity, the Schlessinger
formally versal, formal objects are in fact formally 
versal objects.  Thus we are permitted to apply
Artin approximation and get a formally versal object $\xi_R \in \mxx(\spe R)$.  Now 
`if' is obvious: Since we assumed the diagonal
is representable, the $\xi_R$ give us a chart at each
point of $\mxx$, and we have assumed that these 
are all smooth.  

We omit the proof of `only if' as we do not use it, 
but let us just remark about why the maps $\xi_A$ 
must be smooth.  As we assume $\mxx$ is algebraic 
of locally finite type, 
we may study $\xi_A$ after base change 
to some map between finite type schemes.  
By versality, this map
satisfies the lifting condition with respect
to maps from spectra of Artin rings (at $u$).
For maps of schemes of finite type, this is equivalent to smoothness
at $u$. 
\end{proof}

To turn this into a useful criterion, we must 
give  criteria for showing that the formally versal
object $\xi_R$ is in fact smooth. A refinement of this idea is to give criteria which ensure that (1) formal versality is an open condition, and that (2) formal versality and formal smoothness are equivalent notions.

We recall that the object $\xi_R$ is formally versal at a closed point $t \in \spe R$ if it satisfies the following infinitesimal lifting property: For every surjection of Artin superalgebras, $B' \to B$, there exists an arrow completing the diagram  
\begin{equation} \label{nei} 
\begin{tikzcd} 
\spe k(t) \arrow[r, hook] & \spe B \arrow[d] \arrow[r]                  & \spe R \arrow[d, "\xi_R"] \\
{} & \spe B' \arrow[r] \arrow[ru, dotted] & \mxx                     
\end{tikzcd}
\end{equation}
where $k(t)$ is the residue class field of $t$. Formal versality is an open condition if $\xi_R$ formally versal at $t \in \spe R$ implies that there exists a Zariski open subset $ t \in U \subset \spe R$ such that $\xi_R$ is formally versal at every closed point $u \in U$.


Following Flenner \cite{flenner1981kriterium} we deduce criteria for openness of formal versality by considering a certain sheaf $\exc(\xi_R, \mm)$ 
on the superscheme $\spe R$ whose vanishing locus is the set of points at which $\xi_R$ is formally versal.
Observe that if $\exc(\xi_R, \mm)$ were a coherent module, then openness of formal versality would follow automatically from the fact that coherent modules have closed support;  however, in general, $\exc(\xi_R, \mm)$ is just a sheaf of pointed sets. 
Flenner deals with this problem by identifying certain coherent modules on $\spe R$, naturally associated to $\exc(\xi_R, \mm)$, and deducing a set of criteria which ensure that the union of the support of these modules is equal to the support of $\exc(\xi_R, \mm)$. Let us now describe these modules. 

Let $T=\spe R$, and let $\mm$ be a coherent $\otr$-module. Let $\exc(T, \mm)$ be the sheaf on $T$ classifying extensions of $T$ by $\mm$ and let $\mc{D}(\mm)$ be the sheaf on $T$ classifying extensions of $\xi_R$ over trivial extensions of $T$ by $\mm$. The sheaf $\exc(T, \mm)$ is well-known to be coherent, and $\mc{D}(\mm)$ is coherent under the Schlessinger conditions. 
There is a natural map $\pi: \exc(\xi_R, \mm) \to \exc(T, \mm)$ with quotient $\exc(T, \mm)/\pi(\exc(\xi_R, \mm))$ classifying obstructions to extending $\xi_R$. We define $\mc{O}b(\mm)$ to be the quotient $\exc(T, \mm)/\pi(\exc(\xi_R, \mm))$. 

We will now state Flenner's criterion for openness of formal versality (formulated in the super case).

\begin{theorem}[Openness of formal versality] \label{thm: intro openness}  Suppose $\mxx$ is a superstack satisfying the following conditions: 
\begin{enumerate} 
\item Schlessinger's conditions \emph{(S1)} and \emph{(S2)}
\item \emph{Coherence of $\mc{O}b$}: For every affine superscheme $T$, object $v \in \mxx(T)$ and coherent $\otr$-module $\mc{M}$, $\mc{O}b_v(\mc{M})$ is a coherent $\ot$-module.
\item \emph{Constructibility}: Let $v, T$ be as above and suppose that $T' \subseteq T$ is an irreducible reduced subspace of $T$. Then there is a Zariski open dense subset $V'\subseteq T'$ such that for each $t \in V'$, the canonical morphisms 
\begin{itemize}
    \item[\emph{(a)}] $\mc{D}_v(\otpr) \otimes k(t) \to \mc{D}_v(\otpr \otimes k(t))_t$  and $\mc{D}_v(\Pi \otpr) \otimes k(t) \to \mc{D}_v(\Pi \otpr \otimes k(t))_t$ are  bijective, and
    \item[\emph{(b)}] $\mc{O}b_v(\otpr) \otimes k(t) \to \mc{O}b_v(\otpr \otimes k(t))_t$ and $\mc{O}b_v(\Pi \otpr) \otimes k(t) \to \mc{O}b_v(\Pi \otpr \otimes k(t))_t$  are injective. 
\end{itemize}
\end{enumerate} 
Then for every superscheme $T$ and every $v \in \mxx(T)$, the subset of points $t$ in $T$ which are formally versal for $\mxx$ form a Zariski-open subset of $T$. 
\end{theorem}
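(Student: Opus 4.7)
The plan is to prove openness of formal versality by Noetherian induction on $T$, using the coherent modules $\obv(\mc{M})$ and $\mc{D}_v(\mc{M})$ to control the non-versal locus, and using the constructibility hypothesis as the bridge from stalk-vanishing at a point to stalk-vanishing on a Zariski-open. The starting point is the four-term sequence of sheaves on $T$,
\[
0 \to \mc{D}_v(\mc{M}) \to \exc(v, \mc{M}) \to \exc(T, \mc{M}) \to \obv(\mc{M}) \to 0,
\]
exact as a sequence of sheaves of pointed sets, whose outer terms are coherent under our hypotheses and whose middle term is a torsor-like object. A diagram chase then translates the lifting criterion \eqref{nei} at a closed point $t$ into the simultaneous vanishing of $\obv(\mc{M})_t$ and $\obv(\Pi \mc{M})_t$ for every coherent $\otr$-module $\mc{M}$ supported at $t$; the parity-reversed version is unavoidable because small extensions of Artin superalgebras with residue field $k(t)$ come in both even and odd flavors.

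Next, by Noetherian induction on closed subspaces of $T$, it suffices to treat the case in which $T$ is affine, reduced, and irreducible, and to show that if $v$ is formally versal at some closed point $t_0 \in T$ then there is a Zariski-open neighborhood of $t_0$ on which $v$ is formally versal at every closed point. Specializing the stalk-vanishing reformulation to the structure sheaves $\otpr$ of irreducible reduced subspaces $T' \subseteq T$ through $t_0$, formal versality at $t_0$ yields $\obv(\otpr)_{t_0} = \obv(\Pi \otpr)_{t_0} = 0$, and by coherence of $\obv$ these sheaves vanish on a Zariski-open $U \ni t_0$. Intersecting $U$ with the dense opens $V' \subseteq T'$ provided by the constructibility hypothesis and invoking its injectivity clauses, one obtains $\obv(k(t'))_{t'} = \obv(\Pi k(t'))_{t'} = 0$ for every closed point $t'$ in $U \cap V' \cap T'$.

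Finally, I would promote vanishing at residue-field coefficients to vanishing at arbitrary coherent $\mc{M}$ supported at $t'$ by a standard devissage: any such $\mc{M}$ admits a finite filtration whose successive quotients are of the form $j_* k(s)$ or $j_* \Pi k(s)$ for closed points $s$ specializing to $t'$, and the additivity of the four-term sequence along short exact sequences in $\mc{M}$ propagates vanishing of $\obv$ and $\obv \circ \Pi$ up the filtration. This yields the desired stalk-vanishing criterion, hence formal versality, at every closed point of an open neighborhood of $t_0$, completing the induction. The main obstacle is the very first step: translating the lifting criterion \eqref{nei} into coherent stalk-vanishing in supergeometry requires careful tracking of parity through Schlessinger's conditions (S1) and (S2), verifying that $\mc{D}_v$ genuinely measures non-uniqueness rather than existence of lifts, and checking that the super-dual-number tangent space argument cleanly separates into $\obv(\mc{M})$ and $\obv(\Pi \mc{M})$ contributions so that nothing is lost in passing to the coherent world.
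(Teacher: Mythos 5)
Your overall architecture (reduce to a stalk-vanishing criterion, then use coherence plus the constructibility hypothesis to spread vanishing from a point to a dense open of each integral closed subspace, then conclude openness by an ind-constructibility/Noetherian-induction argument) matches the shape of the paper's proof via Lemma \ref{tech} and Theorem \ref{flenneropen}. But the central translation step is wrong, and it is wrong in a way that collapses the whole difficulty of the theorem. Formal versality of $v$ at $t$ is \emph{not} equivalent to the vanishing of $\mc{O}b_v(\mm)_t$ and $\mc{O}b_v(\Pi\mm)_t$; by Flenner's criterion (Theorem \ref{flennerm}) it is equivalent to the vanishing of $\exc(v,k(t))_t$ and $\exc(v,\Pi k(t))_t$, where $\exc(v,\mm)$ is only a sheaf of pointed sets, not a coherent module. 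The sheaf $\mc{O}b_v(\mm)=\exc(T,\mm)/\pi(\exc(v,\mm))$ measures whether extensions of $T$ lift to extensions of $v$ (an existence statement), whereas versality requires the triviality of \emph{every} pair $(T\hr T', v\to v')$, i.e.\ the vanishing of the full $\exc(v,\mm)_t$, which decomposes as $\on{coker}(c)_t=0$ \emph{and} $\on{im}(\pi)_t=0$ via the sequence $\cder(\ot,\mm)\xrightarrow{c} \mc{D}_v(\mm)\to\exc(v,\mm)\xrightarrow{\pi}\exc(T,\mm)$. Your proposed four-term sequence beginning $0\to\mc{D}_v(\mm)\to\exc(v,\mm)$ is also not correct as stated: it is $\on{coker}(c)$, not $\mc{D}_v(\mm)$, that is identified with $\pi^{-1}(0)$ (Lemma \ref{properties}(3)). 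If your reformulation in terms of $\mc{O}b_v$ alone were valid, openness would follow immediately from closedness of support of a coherent sheaf — exactly the ``if $\exc(v,\mm)$ were coherent'' remark in the introduction — and neither hypothesis (a) of the constructibility condition (about $\mc{D}_v$) nor most of Flenner's argument would be needed. The fact that your proposal never invokes clause (a) is the symptom of this gap.

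What is actually required, and what the paper does, is the following: on a suitable dense open $V'$ of each integral closed $T'\subseteq T$ one must show that $\exc(v,\mc{O}_{T'})_t=0$ if and only if $\exc(v,k(t))_t=0$ (and the $\Pi$-twisted version). This is done by applying generic flatness to the coherent pieces $\on{coker}(c)$ and $\on{im}(\pi)\subseteq\exc(T,\mm)$, using clause (a) to identify $\on{coker}(c)\otimes k(t)$ with $\on{coker}(c_t)_t$ and clause (b) together with coherence of $\mc{O}b_v$ to identify $\on{im}(\pi)\otimes k(t)$ with $\on{im}(\pi_t)_t$; then $\on{Supp}(\exc(v,\mm))=\on{Supp}(\on{coker}(c))\cup\on{Supp}(\on{im}(\pi))$ is closed because both pieces are coherent. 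Your closing devissage (filtering a coherent module supported at a point by copies of $k(t)$ and $\Pi k(t)$) is fine and corresponds to the equivalence $(2)\Leftrightarrow(3)$ of Theorem \ref{flennerm}, but it operates downstream of the step that is missing. To repair the proposal you would need to replace the $\mc{O}b_v$-vanishing criterion by the $\exc(v,-)$-vanishing criterion and then supply the comparison of stalks for \emph{both} coherent constituents, which is where both halves of the constructibility hypothesis enter.
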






It is also possible to deduce
formal smoothness from formal versality 
given some additional hypotheses on $\mxx$. 
We recall that the object $\xi_R$ is \emph{formally smooth} if for any infinitesimal extension $B' \to B$ of Noetherian superalgebras and map $\spe B \to \spe R$ there exists an arrow completing the diagram \begin{equation} \label{into: defn formal smoothness}
\begin{tikzcd} 
 \spe B \arrow[d] \arrow[r]                  & \spe R \arrow[d, "\xi_R"] \\
 \spe B' \arrow[r] \arrow[ru, dotted] & \mxx                     
\end{tikzcd}
\end{equation} 


\begin{theorem} \label{thm: intro formal smoothness} Let $\mxx$ be a superstack satisfying the conditions in Theorem \ref{thm: intro openness}. Let $v \in \mxx(T)$ be algebraic and
suppose $\mxx$ satisfies the following additional conditions: 

\begin{enumerate}
    \item  The sheaf $\mc{D}$ is compatible with completion: If $T_0$ is a reduced finite type scheme, $a_0 \in \mxx(T_0)$, $t \in T_0$ and $\mm$ is a coherent $\oto$-module, then \[\mc{D}_{a_0}(\mm) \otimes_{\mc{O}_{T_0}} \wh{\mc{O}}_{T_0,t} \lgr \varprojlim \mc{D}_{a_0}(\mm/\mfr_t^n \mm)_t \]
    
    \item The sheaf $\mc{D}$ is compatible with \'etale localization: If $e: S_0 \to T_0$ is an \'etale morphism of reduced schemes, where $e^*(a_0)=b_0$, and $\mm$ a coherent $\oto$-module, then
\[
    \mc{D}_{b_0}(\mm \otimes_{\oto} \mc{O}_{S_0}) \cong \mc{D}_{a_0}(\mm) \otimes_{\oto} \mc{O}_{S_0}
\]
\end{enumerate}

Then $v$ is formally smooth if and only it is formally versal at every closed point $t \in T$. 

\end{theorem}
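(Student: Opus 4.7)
The ``only if'' direction will be immediate from the definitions: formal versality at a closed point $t$ is a special case of the formal smoothness lifting problem, namely where $B'$, $B$ are Artin local with residue field $k(t)$.

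For the ``if'' direction, suppose $v$ is formally versal at every closed point of $T$. Given an infinitesimal extension $0 \to I \to B' \to B \to 0$ of Noetherian superalgebras with $I^2 = 0$ and a morphism $f : \spe B \to T$ over $\mxx$, my plan is to show that the obstruction in $\obv$ to lifting $f$ to $\spe B'$ vanishes globally on $\spe B$, and then to produce an actual lift using the $\mc{D}_v$-torsor structure on the set of lifts. Note first that, by the Schlessinger conditions and the coherence hypothesis in Theorem \ref{thm: intro openness}, both $\obv(\mc{M})$ and $\mc{D}_v(\mc{M})$ are coherent $\ot$-modules for every coherent $\otr$-module $\mc{M}$, so the obstruction is detected by its stalks and by Nakayama.

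The core step is vanishing of the obstruction after completion at each closed point. Fix a maximal ideal $\mathfrak{n} \subset B$ with image a closed point $t \in T$. Applying formal versality at $t$ successively to the sequence of Artin quotients $B'/\mathfrak{n}^n \to B/\mathfrak{n}^n$ (each a surjection of Artin superalgebras with residue field $k(t)$), I build a compatible family of lifts $\whxi_n \in \mxx(\spe B'/\mathfrak{n}^n)$. The completion compatibility of $\mc{D}_v$ (hypothesis 1 of the theorem) is what ensures that this compatible family assembles into a genuine extension of $f$ over $\wh{B}_\mathfrak{n}$, not merely a formal object, and hence kills the obstruction after completion at $t$. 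Since this holds at every closed point, coherence of $\obv$ together with faithful flatness of completion and Nakayama yields global vanishing of the obstruction on $\spe B$.

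With obstructions gone, the set of lifts of $f$ to $\spe B'$ forms a $\mc{D}_v$-torsor. The étale-localization compatibility of $\mc{D}_v$ (hypothesis 2 of the theorem) ensures this torsor is trivialized after an étale base change, and étale descent for $\mxx$ then glues the resulting local lifts into a global extension $\spe B' \to T$ over $\mxx$, proving formal smoothness. The main obstacle I anticipate is the central step of translating formal versality at $t$ — which directly produces only an inverse system of Artin-quotient lifts — into a genuine lift over the Noetherian completion $\wh{B}_\mathfrak{n}$; this is exactly where the completion-compatibility hypothesis on $\mc{D}_v$ is indispensable, since without it the compatible family $(\whxi_n)$ would only give a formal object in $\wh{\omxx}_{v(t)}$ rather than an honest element of $\mxx(\spe \wh{B}_\mathfrak{n})$.
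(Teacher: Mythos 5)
Your overall architecture (a local-to-global argument in which hypothesis 1 controls the passage to the limit at a point and hypothesis 2 controls globalization) is the right shape, and the ``only if'' direction is indeed immediate. But there are genuine gaps at the two places where the theorem's hypotheses actually do work. First, the obstruction to the formal-smoothness lifting problem does not live in $\mc{O}b_v$: that sheaf classifies obstructions to extending the \emph{object} $v$ over square-zero extensions of $T$ itself, whereas here one must (a) lift the ring map $R \to B$ to $R \to B'$, which is obstructed by a cotangent-complex class in $\on{Ext}^1(L_R^{\bullet}\otimes_R B, I)_{ev}$, and then (b) correct the chosen lift by a derivation so that the pullback of $v$ becomes isomorphic to the given object over $\spe B'$. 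Step (b) is governed by surjectivity of $c:\on{Der}(R,I)\to D_{a_0}(I)$, and your assertion that ``the set of lifts forms a $\mc{D}_v$-torsor'' is exactly this surjectivity --- it is the main thing to be proved, not a freebie; a priori the lifts form a pseudo-torsor under $\on{im}(c)$ only. Relatedly, hypothesis 2 is not used to ``trivialize a torsor'' (a torsor under a quasi-coherent module on an affine scheme is automatically trivial); it is used to descend surjectivity of $c$ from an \'etale cover via the base-change isomorphism for $\mc{D}$.

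Second, your claim that completion-compatibility of $\mc{D}$ turns the compatible family of Artin-level lifts into ``a genuine extension over $\wh{B}_{\mathfrak{n}}$, not merely a formal object'' conflates (A5) with effectivity (condition (A2) of the main theorem), which is \emph{not} a hypothesis here. (A5) is a statement about the modules $\mc{D}_{a_0}(\mm)$; its actual role is to identify $\varprojlim D_{a_0}(M/\mfr^n M)$ with $D_{a_0}(M)\otimes\wh{R}$ so that surjectivity of $c_n$ at each finite level (obtained by reducing to trivial extensions $A[M]$ with $M$ of length $(1|0)$ or $(0|1)$ and inducting on length --- a reduction your sketch omits entirely, along with the Mittag--Leffler argument needed to make the system of lifts compatible) implies surjectivity of $c$ by faithful flatness of completion. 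The paper's route (Theorem \ref{versality} applied to henselizations, vanishing of the cotangent obstruction because it is an \'etale-local class, and \'etale descent of surjectivity of $c$ via (A6)) fills precisely the steps your proposal leaves open; as written, your argument would not close without importing that material.
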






Given
the above results on versality and smoothness we can turn 
the non-useful Corollary \ref{intro: corollary not useful} 
into the more checkable criterion which is
the third Artin theorem. 
The conditions (formulated in the super case) are as follows. 

\begin{theorem} \label{mainth}  Let $\mxx$ be a limit preserving superstack and assume that conditions \emph{(S1)} and \emph{(S2)} hold. Then $\mxx$ is a algebraic superstack locally of finite type if and only if
\begin{enumerate}
    \item[\emph{(A1)}.] The diagonal morphism $\Delta: \mxx \to \mxx \times \mxx$ is represented by an algebraic superspace locally of finite type. 
    
    \item[\emph{(A2)}.] \emph{Effectivity}: If $R$ is a complete, local, Noetherian superalgebra, then the natural map 
    \[  \mxx( \spe R) \to \varprojlim \mxx( \spe R/\frk{R}^n)\] 
   is an equivalence of categories.

    \item[\emph{(A3)}.] \emph{Coherence of $\mc{O}b$}: For every superscheme $T$, object $v \in \mxx(T)$ and coherent $\otr$-module $\mc{M}$, $\mc{O}b_v(\mc{M})$ is a coherent $\ot$-module.
    
    
    \item[\emph{(A4)}.] \emph{Constructibility}: Let $v, T$ be as above and suppose that $T' \subseteq T$ is an irreducible reduced subspace of $T$. Then there is a Zariski open dense subset $V'\subseteq T'$ such that for each closed point $t \in V'$, the canonical morphisms 
\begin{itemize}
    \item[\emph{(a)}] $\mc{D}_v(\otpr) \otimes k(t) \to \mc{D}_v(\otpr \otimes k(t))_t$  and $\mc{D}_v(\Pi \otpr) \otimes k(t) \to \mc{D}_v(\Pi \otpr \otimes k(t))_t$ are  bijective, and
    \item[\emph{(b)}] $\mc{O}b_v(\otpr) \otimes k(t) \to \mc{O}b_v(\otpr \otimes k(t))_t$ and $\mc{O}b_v(\Pi \otpr) \otimes k(t) \to \mc{O}b_v(\Pi \otpr \otimes k(t))_t$  are injective. 
\end{itemize}

\item[\emph{(A5)}.] The sheaf $\mc{D}$ is compatible with completion: If $T_0$ is a reduced finite type scheme, $a_0 \in \mxx(T_0)$, $t \in T_0$ and $\mm$ is a coherent $\oto$-module, then \[ \mc{D}_{a_0}(\mm) \otimes_{\mc{O}_{T_0}} \wh{\mc{O}}_{T_0,t} \lgr \varprojlim \mc{D}_{a_0}(\mm/\mfr_t^n \mm)_t \]
    
    \item[\emph{(A6)}.]  The sheaf $\mc{D}$ is compatible with \'etale localization: If $e: S_0 \to T_0$ is an \'etale morphism of reduced schemes, with $e^*(a_0)=b_0$, and $\mm$ is a coherent $\oto$-module, then
\[
    \mc{D}_{b_0}(\mm \otimes_{\oto} \mc{O}_{S_0}) \cong \mc{D}_{a_0}(\mm) \otimes_{\oto} \mc{O}_{S_0}
\]
 \end{enumerate}
\end{theorem}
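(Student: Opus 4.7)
The plan is to reduce Theorem \ref{mainth} to Corollary \ref{intro: corollary not useful} and verify the only remaining non-trivial hypothesis there, namely smoothness of the Artin-algebraized $\xi_R$ near its distinguished point, by invoking Theorems \ref{thm: intro openness} and \ref{thm: intro formal smoothness}. The ``if'' direction of Corollary \ref{intro: corollary not useful} requires (S1), (S2), effectivity, limit preservation, representability of the diagonal, and pointwise smoothness of the algebraized objects. All but the last are built into the hypotheses of Theorem \ref{mainth}, with (A1) supplying representability of the diagonal and (A2) supplying effectivity.

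First I would fix a closed point $x: \spe k \to \mxx$ and apply Schlessinger's theorem (Theorem \ref{intro: Schlessinger}) to produce a complete, local, Noetherian superalgebra $R$ together with a formally versal, formal object $\whxi \in \wh{\omxx}_x(\spf R)$. By effectivity (A2), this formal object is the restriction of an honest $\whxi \in \mxx(\spe R)$, which retains formal versality in the sense of diagram \eqref{formallyversallift}. Artin algebraization (Theorem \ref{thm: intro algebraization}) then yields a finite type affine superscheme $\spe A$, a closed point $u \in \spe A$, an object $\xia \in \mxx(\spe A)$, and an isomorphism $\alpha: R \lgr \wh{A}_{\muu}$ identifying the restrictions $\whxi \vert_{\spe(R/\mr^n)}$ with $\xia \vert_{\spe(A_{\muu}/\muu^n)}$. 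Since formal versality at a point is tested against Artin quotients, which only see the formal completion, formal versality of $\whxi$ transfers to formal versality of $\xia$ at $u$.

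The key step is to enlarge ``formally versal at $u$'' to ``smooth in a Zariski neighborhood of $u$''. Hypotheses (S1), (S2), (A3), (A4) are precisely the input data for Theorem \ref{thm: intro openness} on openness of formal versality, which produces a Zariski open neighborhood of $u$ in $\spe A$ on which $\xia$ is formally versal at every closed point. On this neighborhood, hypotheses (A5) and (A6) supply the additional input to Theorem \ref{thm: intro formal smoothness}, upgrading formal versality at every closed point to formal smoothness of $\xia$. Since $\spe A$ is of finite type over $\spe \bb$, formal smoothness implies ordinary smoothness on a possibly smaller open neighborhood of $u$, which completes the verification of condition (5) of Corollary \ref{intro: corollary not useful} and hence proves the ``if'' direction. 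For the ``only if'' direction, one picks a smooth chart $\pi: U \to \mxx$ with $U$ a superscheme locally of finite type and checks (A1)--(A6) after base change to $U$, where they reduce to known representable statements about coherent sheaves and \'etale localization; following the excerpt, I would omit this direction since it is not needed for applications.

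The main obstacle is not in the above assembly, which is essentially formal once all earlier statements are in hand, but in the prerequisites themselves: Schlessinger's theorem, Artin approximation and algebraization, and the Flenner-style openness and smoothness criteria all require careful transposition to the super setting, and in particular demand that the coherence of the obstruction sheaf $\obv$ and the tangent sheaf $\mc{D}_v$ be established with the $\z/2$-grading in place. The body of the paper is devoted to establishing those foundations; once they are granted, the proof of Theorem \ref{mainth} is the short chain of implications sketched here.
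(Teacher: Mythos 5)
Your proposal is correct and follows essentially the same route as the paper: Schlessinger produces a formally versal formal object, (A2) effectivizes it, Artin algebraization yields a finite type chart, Theorem \ref{thm: intro openness} (via (A3), (A4)) gives an open locus of formal versality, Theorem \ref{thm: intro formal smoothness} (via (A5), (A6)) upgrades this to formal smoothness and hence smoothness on a finite type neighborhood, and (A1) gives representability; the paper likewise proves only the ``if'' direction by assembling these charts into a smooth atlas. Framing this as verifying condition (5) of Corollary \ref{intro: corollary not useful} is just a repackaging of the same argument.
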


\begin{proof}  We prove this result
using the statements of the above theorems,
whose proofs we provide in later sections. 

Since (S1) and (S2) hold there exists a formally versal, formal object $\whxi \in \wh{\omxx}_x(\spf R)$ for each $x \in \mxx(\spe k)$ by Schlessinger's Theorem (Theorem \ref{intro: Schlessinger}).  
By (A2), the formal object $\whxi$ can be approximated with a formally versal object $\xi \in \mxx(\spe R)$. We can approximate $\xi$ with a formally versal, algebraic object,  $\xi_R \in \mxx(\spe R)$ by Artin algebraization (Theorem \ref{thm: intro algebraization}). 
The set of closed points in $\spe R$ at which $\xi_R$ is formally versal is Zariski open by (A3) and (A4) and Theorem \ref{thm: intro openness}. Denote this locus by $U \subset \spe R$---possibly shrinking $U$ to a superscheme of finite type---and let $\xi_R$ continue to denote the pullback of $\xi_R$ to an object over $U$. $\xi_R$ is formally smooth by  (A5) and (A6) and Theorem \ref{thm: intro formal smoothness}. The associated map $U \to \mxx$ is representable by (A1). This map is smooth because it is formally smooth and $U$ is of finite type. 
Repeating this process for each $x \in \mxx(\spe k)$, let $\mc{U}= \bigsqcup U_i$ and $u= \sqcup \xi_R$, where the index $i$ is over objects $x \in \mxx(\spe k)$. Then $\mc{U} \to \mxx$ is a smooth, and representable cover of $\mxx$. Furthermore, $\mc{U}$ is locally of finite type since each $U_i$ is of finite type. Thus, $\mxx$ is an algebraic superstack locally of finite type. 
\end{proof}

\vspace{2mm} 
{\bf Strategy and outline.}
Artin approximation (Theorem
\ref{thm: intro approximation} above) is
proven as Theorem \ref{superArt} below; 
we follow \cite{conrad2002approximation}
using the observation of 
\cite{moosavian2019existence} that
the (deep and difficult) N\'eron-Popescu
theorem \cite{neron1964modeles, popescu1986general}
can be extended to super commutative algebra
by a simple argument.  

 Artin algebraization (Theorem \ref{thm: intro algebraization} above) is proven as 
Theorem \ref{superartalg} below.  We adapt some
ideas from \cite{conrad2002approximation},
but are able to avoid many difficult convergence
arguments by reducing to the bosonic case.  

The proof of Schlessinger's theorem (Theorem \ref{intro: Schlessinger} above) is identical
to the bosonic case; we provide it in 
Appendix \ref{schlessinger proof appendix}
 for completeness. 

Openness of versality (Theorem \ref{thm: intro openness} above) is proven as Theorem \ref{flenneropen} below.  We follow \cite{flenner1981kriterium}, except that the proof of the key
lemma about half-exact functors (Lemma \ref{copr}, 
the counterpart of \cite[Theorem 6.1]{flenner1981kriterium}) is done
by reduction to the bosonic case.\footnote{ 
It is asserted in 
\cite{flenner1992analytic} that
Flenner's results on openness of versality 
generalize to the superanalytic setting, 
but few proofs are explicitly written; 
missing in particular is any argument that
the nontrivial   \cite[Lemma 4.1]{flenner1981kriterium} holds 
in supercommutative algebra.} 

We show in Theorem \ref{versality} that
(under appropriate hypotheses), 
formal versality in fact implies versality; 
here we begin by following Artin's original proof \cite[Theorem 3.3]{artin1974versal}, but then avoid a convergence question by reducing to the bosonic case. 

Finally, the relationship between formal 
versality and formal smoothness (Theorem \ref{thm: intro formal smoothness} above) is proven as 
Theorem \ref{formsmooth} below, 
following Artin's original proof \cite[Proposition 4.3]{artin1974versal}.

\vspace{2mm}

{\bf Acknowledgements.}  
I would like to thank Jack Hall for many helpful explanations. I would also like to thank Vivek Shende, Ron Donagi, and Alexander Voronov for helpful discussions. 

\section{Notations and conventions} 

The letter $k$ will denote an algebraically closed field of characteristic zero. We consider the following categories. 
\begin{quote}
    $\on{sAffSch}$= category of Noetherian affine superschemes over $ \spe k$ whose residue fields at closed points are all equal to $k$.  
   
\end{quote}

\begin{quote} $C$=category of local, Artin superalgebras over $k$ with residue field at the closed point equal to $k$. 
    
\end{quote}

\begin{quote} $C^*$=category of local, Noetherian, Henselian superalgebras over $k$ with residue field at the closed point equal to $k$. 
    
\end{quote}
All super vector spaces are assumed to be over $k$.  Morphisms of super vector spaces are  grading-preserving. 
All superalgebras are over $k$, and are assumed to be Noetherian and supercommutative. Morphisms of local rings are always local.  

We denote the $\z_2$-grading on a superalgebra $A$ by $A=A^+ \oplus A^-$ and write $\mc{J}_A$ for the ideal generated by $A^-$ in $A$. For a homogeneous element $a \in A$, $|a|=i \ \on{mod} 2$ will denote the parity of $a$. We say that $a$ is even (resp. odd) if $|a|=0$ (resp. $|a|=1$). We write $A_{\bos}$ for the ordinary $k$-algebra $A/\mc{J}_A$ and $A_{\red}$ for the quotient of $A$ by the ideal of \emph{all} nilpotents in $A$. Given a superscheme $S$, we denote the $\z_2$-graded structure sheaf of $S$ by $\mc{O}_S= \mc{O}_{S}^+ \oplus \mc{O}_{S}^-$ and write $\mc{J}_S = \langle \mc{O}_{S}^- \rangle$ for the sheaf of odd nilpotents in $S$. 


\section{Preliminaries} \label{schlessinger}

\subsection{Super algebraic geometry}

A \emph{super vector space} over $k$ is a vector space $V$ with a $\z_2$-grading, $V=V^+ \oplus V^-$ where $V^+$ and $ V^-$ are  $k$-vector spaces in the ordinary sense. The super dimension of $V$, denoted by $\on{sdim}_k(V)$, is a pair of non-negative integers $r|s$ where $r = \on{dim}_k V^+$ and $s = \on{dim}_k V^-$. We say that $V$ is finite dimensional if $r < \infty$ and $ s < \infty$. 

A \emph{superalgebra} over $k$ is a super vector space which is an associative algebra with unit $1$ and such that the multiplication $A \otimes A \to A$ is a morphism of super vector spaces. A superalgebra is supercommutative if for all $a,b \in A$ we have that 
\[ ab = (-1)^{|a||b|} ba.  \] 
A superalgebra is Noetherian if it satisfies the ascending chain condition on ideals. 

Henceforth, the word superalgebra will mean a Noetherian, supercommutative $k$-superalgebra.

A super module $M$ over a superalgebra $A$ is a super vector space on which $A$ acts from the left, where the action $a \otimes m \mapsto a \cdot m, \ \ (a \in A, m \in M)$ is a morphism of super vector spaces, that is, $|a \cdot m| = |a| + |m|$.  
Henceforth, we write $A$-module to mean $A$ super module. 

Every superalgebra $A$ has an ideal $\mc{J}$ of odd nilpotents generated by $A^-$. $\mc{J}$ has a canonical filtration:
\[  \mathscr{J}: \jj \supset \jj^2 \supset \jj^3 \supset \cdots \supset \jj^r, \] 
where $r$ is the smallest integer for which $\jj^{r+1}=0$. The \emph{$\jj$-associated graded of $A$} is the $\z \times \z_2$-graded $A/\jj$-module
\[ \on{gr}_{\jj}(A) := A/\jj \oplus \jj/\jj^2 \oplus \cdots \oplus \jj^r \]
.

A \emph{superspace} is a \emph{locally super ringed space} $(X, \mc{O}_X)$ consisting of a topological space $X$ and a structure sheaf 
 \[ \mc{O}_X = \mc{O}_X^+ \oplus \mc{O}_X^-\]
 of superalgebra such that the stalks are local superrings. 
 
A \emph{superscheme} $(X, \mc{O}_X)$ is a superspace $(X, \mc{O}_X)$ such that $(X, \mc{O}_X^+)$ is an ordinary scheme and $\mc{O}_X^-$ is a quasi-coherent sheaf of $\mc{O}_X^+$-modules. A \emph{morphism of superschemes} $f:(X, \mc{O}_X) \to (Y, \mc{O}_Y)$ is a continuous map $f:X \to Y$ of topological spaces along with a morphism $f^{\#}: \mc{O}_Y \to f_* \mc{O}_X$ of sheaves of superalgebras inducing a local morphism on stalks. 


A superscheme $X$ is affine if it is the $\spe$ of a superalgebra $A$. An affine superscheme $\spe A$ is of finite type, Noetherian, etc. if the superalgebra $A$ is is finite type, Noetherian, etc. 

\begin{definition}[Flat morphism] A morphism $f: X \to Y$ of superschemes is \emph{flat} if $f^{-1} \mc{O}_Y$ is a flat $\mc{O}_X$-module. 
 
\end{definition}

 \begin{definition}[Smooth morphism] \label{smooth} A morphism $f: X \to Y$ of superschemes is \emph{smooth} at $x \in X$ if the following hold:
 
 \begin{itemize} 
 \item[(a)] $f$ is of finite type at $x$.
 \item[(b)] $f$ is flat at $x$.
 \item[(c)] If $y=f(x)$, then $X_y = X \times_Y \on{Spec} k(y)$ is regular at $x$. 
 
 \end{itemize}
 We say that $f$ is \emph{smooth of relative dimension $(m|n)$} if $f$ is smooth and for each $y=f(x)$, $\on{sdim}_{k(y)} X_y = (m|n)$.

 \end{definition} 
 
  \begin{definition}[\'Etale morphism] \label{etalemorph} A smooth morphism $f: X \to Y$ of superschemes is $\acute{e}$tale if for every $y=f(x)$, $\on{sdim}_{k(y)} X_y = (0|0)$. We say that $f:X \to Y$ is an $\acute{e}$tale covering of $Y$ if $f$ is also surjective. 
 
 \end{definition} 


\begin{lemma}
\label{locallyetalesubscheme} Let $f: X \to Y$ be a smooth morphism of superschemes of relative dimension $(m|n)$ and let $y \in Y$ be a closed point of $Y$, let $x \in X_y$ be a point in the fiber over $y$. Then there exists a Zariski open subset $x \in U \subseteq X$ and an \'etale morphism of $Y$-superschemes $U \to \mathbb{A}_Y^{m|n}$ such that the following diagram  commutes
\begin{center}
    \begin{tikzcd}
    U \arrow[rd, swap, "f \vert_U"] \arrow[r] & \mathbb{A}_Y^{m|n} \arrow[d, "\on{pr}"] \\
    {} & Y 
    \end{tikzcd}
\end{center}

\end{lemma}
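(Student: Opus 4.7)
The plan is to mimic the classical proof that a smooth morphism is locally étale over affine space: we choose super-coordinates on $X$ near $x$ which restrict to a minimal system of generators for the maximal ideal of the fiber $X_y$ at $x$, and use them to define the desired étale map. First I would reduce to the affine case $X=\spe B$, $Y=\spe A$ by shrinking both; let $\mathfrak{p}\subset B$ correspond to $x$ and $\mathfrak{q}\subset A$ to $y$. By Definition \ref{smooth}, the fiber $X_y=\spe(B\otimes_A k(y))$ is regular at $x$ of super dimension $(m|n)$; concretely, the maximal ideal of the local super-ring $(B\otimes_A k(y))_\mathfrak{p}$ admits a minimal system of generators consisting of $m$ even and $n$ odd elements.

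Next, I would lift such a minimal generating set to elements $t_1,\ldots,t_m\in B^+$ and $\theta_1,\ldots,\theta_n\in B^-$, further shrinking around $x$ if needed so that the lifts are global sections on an affine open $U=\spe B$ containing $x$. Sending $T_i\mapsto t_i$ and $\Theta_j\mapsto\theta_j$ defines a map of $A$-superalgebras $A[T_1,\ldots,T_m,\Theta_1,\ldots,\Theta_n]\to B$, and therefore a $Y$-morphism $\phi\colon U\to\mathbb{A}_Y^{m|n}$ making the required triangle commute. What remains is to verify that $\phi$ is étale at $x$ in the sense of Definition \ref{etalemorph}, i.e.\ smooth of relative super dimension $(0|0)$. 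Finite type is automatic; for the fiber condition, let $z=\phi(x)$. By construction, the image in $B\otimes_A k(y)$ of the maximal ideal of $A[T,\Theta]$ at $\phi(\mathfrak{p})$ already generates the maximal ideal at $\mathfrak{p}$, so the fiber $\phi^{-1}(z)$ has trivial maximal ideal at $x$ and is therefore regular of super dimension $(0|0)$ there.

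The hard part is verifying that $\phi$ is flat at $x$. I would apply the fibral criterion for flatness: flatness of $\phi$ at $\mathfrak{p}$ follows from (a) flatness of $f$ at $\mathfrak{p}$, which holds by smoothness of $f$, together with (b) flatness at $\mathfrak{p}$ of the induced map on $y$-fibers $k(y)[T,\Theta]\to B\otimes_A k(y)$. For (b), both source and target are super-regular local rings of the same super dimension $(m|n)$, and by construction the map carries a minimal system of generators of the maximal ideal to such a system; passing to completions in characteristic zero, both completions are isomorphic to $k(x)[[T_1,\ldots,T_m]]\otimes_{k(x)}\bigwedge^\bullet k(x)\langle\Theta_1,\ldots,\Theta_n\rangle$ by a super Cohen-type structure theorem, and the induced map is an isomorphism, hence faithfully flat. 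Flatness of the original map then descends from the completion. The two delicate ingredients here — the super fibral flatness criterion and the super Cohen structure theorem for regular super-local rings — reduce cleanly to their classical bosonic counterparts by filtering by the ideal $\mc{J}$ of odd nilpotents and arguing one graded piece at a time, which is the strategy used elsewhere in the paper.
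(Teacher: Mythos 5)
The paper states Lemma \ref{locallyetalesubscheme} without proof, so there is no argument of the author's to compare against; your proposal is essentially the standard classical proof (EGA IV-style) transported to the super setting, and its outline is sound. Two points deserve attention. First, your construction only works as written when $x$ is a \emph{closed} point of the fiber: for a non-closed $x$ the local ring $(B\otimes_A k(y))_{\mathfrak p}$ has dimension strictly less than $(m|n)$ and its residue field is not $k$, so you cannot extract $m$ even and $n$ odd generators of its maximal ideal. This is easily repaired either by specializing to a closed point $x_0\in\overline{\{x\}}\cap X_y$ (any open $U\ni x_0$ on which $\phi$ is \'etale also contains the generization $x$), or, more cleanly, by replacing ``generators of $\mathfrak m_x$'' with ``sections $t_i,\theta_j$ whose differentials form a basis of $\Omega^1_{X/Y}\otimes k(x)$,'' which makes sense at every point where $f$ is smooth of relative dimension $(m|n)$. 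The latter is in fact the route the paper itself implicitly takes: in the proof of Theorem \ref{superArt} it chooses a basis $du_1,\dots,du_n,d\mu_1,\dots,d\mu_m$ of $\Omega^1_{B_\lambda/\mc{O}_{S,s}}$ and asserts that the resulting map $\mc{O}_{S,s}[x_\bullet,\eta_\bullet]\to B_\lambda$ is \'etale, which is precisely the content of this lemma.

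Second, the flatness step is where the real work lives, and your proposal outsources it to two unproven super statements (the fibral flatness criterion and a Cohen-type structure theorem for regular local super rings). The reduction-to-bosonic strategy you sketch is the right one and matches the paper's philosophy, but note that ``regular'' for super local rings is itself never defined in the paper (Definition \ref{smooth}(c) uses it as a black box), so to make your completion argument airtight you would need to fix a definition under which the completion of the fiber's local ring at a closed point is $k[[T_1,\dots,T_m]]\otimes_k\Lambda(\Theta_1,\dots,\Theta_n)$. Granting that, the surjectivity of the map on completions follows from surjectivity on cotangent spaces via Lemma \ref{surjcot}, and a surjective endomorphism of a Noetherian ring is injective, so the completion map is indeed an isomorphism; an alternative that avoids the structure theorem is to prove flatness of the fiber map by miracle flatness on the bosonic level and then handle the odd part by induction on the filtration by powers of $\mc{J}$.
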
 

\subsection{Extensions and deformations}

An \emph{infinitesimal extension} $(R', \phi)$ of a superalgebra $R$ is a surjection $\phi: R' \to R$ of superalgebras such that $\on{ker}(R' \to R)=I$ is nilpotent. An infinitesimal extension of a superscheme $T$ is a closed immersion $i: T \hr T'$ of superschemes defined by a sheaf of ideals $\mc{I} \subset \mc{O}_{T'}$ such that $\mc{I}^n =0 $ for $n >1$.  

An morphism of infinitesimal extensions $(R', \phi) \to (R'', \phi')$ of $R$ is a commutative diagram
\begin{equation}
 \begin{tikzcd}
 0 \arrow[r] & I \arrow[r] \arrow[d, double, no head] & R' \arrow[r, "\phi"] \arrow[d, " \alpha"] & R \arrow[r] \arrow[d, double, no head] & 0 \\
 0 \arrow[r] & I \arrow[r] & R'' \arrow[r, "\phi''"] & R \arrow[r] & 0. 
 \end{tikzcd}
\end{equation}
The map $\alpha$ is an isomorphism by the Snake lemma. It follows that the extensions $(R', \phi)$ of $R$ by $I$ form a groupoid. 

As in the classical case, any infinitesimal extension $(R', \phi)$ of $R$ can be factored into a finite composite of square-zero extensions, the ideal $I$ of which is naturally a finitely generated $R$-module. 
  

Let $I$ be a finitely-generated $R$-module and let $\ex(R, I)$ denote the set of isomorphism classes of square-zero extensions of $R$ by $I$. 
This gives a functor $\ex(R,-): \on{Coh}(R) \to \on{Set}$ sending a finitely generated $R$-module $I$ to the set $\ex(R,I)$. There is an $R$-module structure on $\ex(R,I)$ which is described exactly as in the classical case, (see e.g, \cite{sernesi2007deformations}): In particular,  $\ex(R,-): \on{Coh}(R) \to \on{Coh}(R)$.

A square-zero extension $(R', \phi)$ of $R$ is called \emph{trivial} if it has a section $s: R \to R'$ such that $\phi s=1_R$.
Given an $R$-module $I$ we can always construct a trivial extension of $R$ by $I$: Define on $ R \oplus I$ the multiplication
\begin{equation} \label{multtriv} (r, i)  \cdot (r', i') = (rr', r'i + i' r), \ \ r, r' \in R, \ i, i' \in I. 
\end{equation} 
Observe that $I^2=0$. Henceforth, we set $R[I]:=(R \oplus I, \cdot)$. We identify $R[I]$ with the zero element in $\ex(R,I)$. 




\begin{lemma} \label{torsorder} Let $(R', \phi)$ be a square-zero extension of $R$ and let $f: B \to R$ be a morphism of superalgebras. Let  $f_1, f_2: B \to R'$ be two superalgebra morphisms such that the following diagram commutes. 
     \begin{equation} \label{torsordig} 
\begin{tikzcd}
            &             & B \arrow[d, shift left, "f_1"] \arrow[d, shift right, swap, "f_2"] \arrow[rd, "f"] &             &   \\
0 \arrow[r] & I \arrow[r] & R' \arrow[r, "\phi"]                        & R \arrow[r] & 0
\end{tikzcd}
    \end{equation}
Then the induced map $f_1 - f_2: B \to I$ is a grading-preserving $k$-linear derivation.

\end{lemma}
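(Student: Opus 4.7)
The plan is straightforward: define $d := f_1 - f_2 \colon B \to R'$ and show that (a) $d$ lands in $I$, (b) $d$ is $k$-linear and grading-preserving, and (c) $d$ satisfies the (even) super-Leibniz rule, with $B$ acting on $I$ through $f \colon B \to R$ and the given $R$-module structure on $I$.

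For (a), since $\phi \circ f_1 = \phi \circ f_2 = f$ by the commutativity of \eqref{torsordig}, we have $\phi \circ d = 0$, so $d$ factors through $\ker(\phi) = I$. Claim (b) is immediate: $f_1$ and $f_2$ are morphisms of $k$-superalgebras, hence grading-preserving and $k$-linear, and these properties are preserved under taking differences. The only real content is (c). For $a, b \in B$ I would compute directly, adding and subtracting $f_2(a) f_1(b)$:
\begin{align*}
d(ab) \;=\; f_1(a) f_1(b) - f_2(a) f_2(b) \;=\; \bigl(f_1(a) - f_2(a)\bigr) f_1(b) + f_2(a)\bigl(f_1(b) - f_2(b)\bigr) \;=\; d(a)\, f_1(b) + f_2(a)\, d(b).
\end{align*}
Since $d(a), d(b) \in I$ and $I^2 = 0$ in $R'$, and since $f_1(x) - f_2(x) \in I$ for any $x$, we may replace $f_1(b)$ by $f_2(b)$ in the first term at no cost. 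Then, using that the $R$-module structure on $I$ is induced by $\phi$ (so that multiplication in $R'$ by any lift of $r \in R$ agrees with the $R$-action on $I$), both $f_2(a)$ and $f_2(b)$ act on $I$ as $f(a)$ and $f(b)$ respectively. This yields
\[
d(ab) \;=\; d(a)\, f(b) + f(a)\, d(b),
\]
which is the Leibniz rule for an \emph{even} derivation $B \to I$ (viewing $I$ as a $B$-module via $f$). No Koszul sign appears because $d$ is grading-preserving, i.e.\ of degree $0$.

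I do not anticipate a serious obstacle: the argument is purely formal and the supercommutativity of $R'$ never needs to be invoked beyond the well-definedness of the $R$-module structure on $I$. The one place to be careful is to justify that multiplication by $f_2(a) \in R'$ really does coincide with the $R$-action by $f(a)$ on elements of $I$; this is precisely the statement that $I$ is a square-zero ideal and hence its $R'$-module structure descends to an $R = R'/I$-module structure, which is part of the data of the extension.
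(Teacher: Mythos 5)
Your proof is correct, and it is the standard argument: the paper in fact states Lemma \ref{torsorder} without proof, and your three steps (image in $I$ via $\phi\circ f_1=\phi\circ f_2$, linearity and parity from $f_1,f_2$ being superalgebra maps, and the Leibniz rule via $I^2=0$ plus the descent of the $R'$-action on $I$ to $R$) are exactly what is implicitly being invoked. The one point worth being explicit about, which you do address, is that $d(a)f_1(b)=d(a)f_2(b)$ because the difference lies in $I^2=0$, and that the resulting identity $d(ab)=d(a)f(b)+f(a)d(b)$ is the even super-Leibniz rule since $d$ has degree $0$.
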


The maps $f_1$ and $f_2$ in Lemma \ref{torsorder}  are called \emph{lifts} of $f$. In particular, Lemma \ref{torsorder} shows that the set of lifts of $f$ form a $\on{Der}_k(B,I)$-torsor. In the case that  $B=R$ and $f=1_R$, giving a lift of $1_R$ is equivalent to giving a section $s: R \to R'$ such that $\phi s=1_r$.  


\subsubsection{Artin superalgebras}

Let $C$ denote the category of local, Artin superalgebras over $k$ with residue field at the unique closed point equal to $k$. 

\begin{example} \normalfont An important example of a local, Artin superalgebra are the super dual numbers $k[\epsilon, \eta]/(\epsilon^2, \epsilon \eta), \ |\epsilon|=0, \ |\eta|=1$. 

\end{example}

As in the ordinary case, any surjection $A' \to A, \ \on{ker}(A' \to A)=I$ in $C$ is nilpotent. 
We say that a square-zero extension $A' \to A, \ \on{ker}(A' \to A)=I$ with $I=(t_1, \cdots, t_r, \theta_1, \cdots, \theta_s), \ |t_i|=0, \ |\theta_i|=1$ is $(r|s)$-small if $\mfr_{A'} \cdot I=0$ so that $I$ is naturally an $(r|s)$-dimensional super vector space over $k=A'/\mfr_{A'}$. 
 
 \begin{example} \normalfont The surjection $k[\eta] \to k, \ |\eta|=1, \ \on{ker}(k[\eta] \to k)=(\eta)$ is a $(0|1)$-small extension of $k$ by $(\eta) \cong \Pi k$. 
The surjection $\sdual \to k, \ \on{ker}(\sdual \to k)=(\epsilon, \eta)$ is a $(1|1)$-small extension of $k$ by $(\epsilon, \eta) \cong k \times \Pi k$.

 \end{example}
 
 \begin{lemma} \label{factorization} Let $f: A' \to A, \ \on{ker}(A' \to A)=I$ be a square-zero extension in $C$. Then $f$ can be factored as a composite of $(1|0)$- and $(0|1)$-small extensions. 
 
 \end{lemma}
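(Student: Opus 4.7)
The plan is to factor $f\colon A' \to A$ as a tower of quotients $A' \twoheadrightarrow A'/J_{N-1} \twoheadrightarrow \cdots \twoheadrightarrow A'/J_1 \twoheadrightarrow A'/J_0 = A$, where $I = J_0 \supsetneq J_1 \supsetneq \cdots \supsetneq J_N = 0$ is a filtration of $I$ by $A'$-submodules (equivalently, ideals of $A'$ contained in $I$) such that each successive quotient $J_{i-1}/J_i$ is one-dimensional over $k$, either purely even or purely odd, and $\mfr_{A'}J_{i-1}\subseteq J_i$. Granted such a filtration, the kernel of the $i$-th step is $J_{i-1}/J_i$, which is annihilated by the image of $\mfr_{A'}$ in $A'/J_i$; hence each step is $(1|0)$- or $(0|1)$-small in the sense of the preceding definition.

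To construct the filtration I would first exploit the $\mfr_{A'}$-adic filtration on $I$: $I \supseteq \mfr_{A'}I \supseteq \mfr_{A'}^2 I \supseteq \cdots$. Since $A'$ is Artin local, $\mfr_{A'}$ is nilpotent, so this chain terminates at $0$ after finitely many steps. Because $A'$ is Noetherian and $I$ is a finitely generated $A'$-module, each graded piece $\mfr_{A'}^j I/\mfr_{A'}^{j+1}I$ is a finitely generated module annihilated by $\mfr_{A'}$, and is thus a finite-dimensional $\z_2$-graded $k$-vector space.

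Second, within each $\mfr_{A'}^j I/\mfr_{A'}^{j+1}I$ I would pick a homogeneous $k$-basis and use it to produce a chain of $k$-subspaces with one-dimensional homogeneous successive quotients. Pulling each such $k$-subspace back to $\mfr_{A'}^j I$ gives a $k$-subspace containing $\mfr_{A'}^{j+1}I$; since $\mfr_{A'}$ annihilates the quotient, such a preimage is automatically closed under the $A'$-action (any $a \in A'$ decomposes as a scalar in $k$ plus an element of $\mfr_{A'}$, and the $\mfr_{A'}$-part acts as zero modulo $\mfr_{A'}^{j+1}I$). Concatenating these refinements as $j$ varies produces the filtration $\{J_i\}$ with all required properties.

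The main work is really bookkeeping: verifying that the intermediate objects are genuine $A'$-submodules (not merely $k$-subspaces of $I$) and that the residue field identification $A'/\mfr_{A'} = k$ from our standing conventions gives us the freedom to refine the graded pieces into purely even or purely odd one-dimensional quotients. I do not anticipate any essentially super obstacle, since the argument is just a mild $\z_2$-graded refinement of the standard Artinian d\'evissage.
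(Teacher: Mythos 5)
Your argument is correct and is essentially the paper's own proof: first dévissage along the $\mathfrak{m}_{A'}$-adic filtration of $I$ (using nilpotence of $\mathfrak{m}_{A'}$) to reduce to the small case, then refinement of each graded piece by a homogeneous $k$-basis into $(1|0)$- and $(0|1)$-small steps. Your extra care in checking that the intermediate $k$-subspaces are genuine ideals is a point the paper leaves implicit, but the route is the same.
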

 
\begin{proof} The maximal ideal $\mfr_{A'}$ is nilpotent since $A'$ is Artin and so there exists some integer $m \ge 0$ such that $\mfr_{A'}^m=0$. From this we get a factorization,  
 \[ A' = A'/I \mfr_{A'}^{m-1} \to A'/I\mfr_{A'}^{m-2} \to \cdots \to A'/I =A. \] 
of surjective maps whose kernels $ \mfr_{A'}^iI/\mfr_{A'}^{i+1} I$ are all annihilated by $\mfr_{A'}$. That is, $f$ can be factored as a sequence of small $(r_i|s_i)$-extensions, for $i=1, \cdots, m-1$, where $(r_i|s_i) = \on{sdim}_k(\mfr_{A'}^iI/\mfr_{A'}^{i+1} I)$.  Thus it suffices to prove the lemma when $f$ is a $(r|s)$-small extension. Take a basis $(t_1, \dots, t_r, \theta_1, \dots, \theta_s)$ for $\on{ker}(f)=I$ as a super vector space over $k$. From this choice we get a  factorization, 
\[ A' \to A'/(t_1) \to A'/(t_1, t_2) \to \cdots \to A'/(t_1, \dots, t_r) \to A'/(t_1, \dots, t_r, \theta_1) \to \cdots \to A'/(t_1, \dots, \theta_s)=A \]
into $s$ $(0|1)$-small extensions and  $r$ $(1|0)$-small extensions.


\end{proof} 




 \begin{definition} An infinitesimal deformation of a superscheme $Y$ is a cartesian diagram of morphisms of superschemes 
 \begin{equation} \label{defn: deformation} 
     \begin{tikzcd}
     Y \arrow[r] \arrow[d] & \mc{Y} \arrow[d, "\pi"] \\
    \spe k \arrow[r, "s"] & S 
     \end{tikzcd}
 \end{equation}
 with $S$ an Artin superscheme, and $\pi$ flat and surjective. 
 \end{definition}

\subsection{Categories fibered in groupoids} 

A \emph{category fibered in groupoids} over the category $\sS$ is a functor $p: \mxx \to \sS$ with the following properties: 
\begin{enumerate}
\item For every morphism $f: S \to T$ in $\sS$ and for every object $t$ in $\mxx$ over $T$, there exists an object $s \in \mxx(S)$ and a cartesian arrow $\ov{f}:s \to t$.

\item Compositions of cartesian arrows are cartesian. 

\item $\ov{f}: s \to t$ is an isomorphism if $p(\ov{f})$ is. 
\end{enumerate}

The fiber $\mxx(T)$ is the groupoid whose objects are the objects in $\mxx$ over $T$ and whose morphisms are arrows in $\mxx$ which go to $\on{id}_T$ under $p$.  
Henceforth, we will refer to $\mxx$ as simply a groupoid. 



\begin{example} \normalfont  
The functor $\on{Def}_Y: C \to \on{Groupoid}$ sending an Artin superscheme $S$ to the groupoid of infinitesimal deformations of $Y$ over $S$ is a category fibered in groupoids. To check this, 
let $f: S \to T$ be a morphism in $C$ and let $\mc{Y}$ be a deformation of $Y$ over $T$. Then the top horizontal arrow in the fiber product  diagram 
\begin{center}
    \begin{tikzcd}
    \mc{Y} \times_T S \arrow[r] \arrow[d] & \mc{Y} \arrow[d] \\
    S \arrow[r, "f"] & T
    \end{tikzcd}
\end{center}
is a cartesian arrow $\mc{Y} \times_T S \to \mc{Y}$ over $f: S \to T$. This proves condition 1. Conditions 2  and 3 are now obvious from properties of the fiber product.

\end{example}

For a fixed object $v \in \mxx(T)$ and a morphism of superschemes $f: T \to S$, we write $\mxx_v(S)$ for the groupoid of maps $v \to s$ in $\mxx$ lying over $f$. An isomorphism $(v \to s_1) \to (v \to s_2)$ in $\mxx_v$ is a commutative diagram 
\begin{center}
\begin{tikzcd} \label{isosgr}
                        & s_1 \arrow[dd, "\alpha"] \\
v \arrow[rd] \arrow[ru] &                          \\
                        & s_2                     
\end{tikzcd}
\end{center}



A bar will denote the set of isomorphism classes in $\mxx$. For example,  $\ov{\mxx}_v(S)$ is the set of isomorphism classes in the groupoid $\mxx_v(S)$. 

\begin{example} \normalfont  $\ov{\on{Def}}_Y: C \to \on{Set}$ is a functor sending an Artin superscheme $S$ to the \emph{set} of isomorphism classes of deformations of $Y$ over $S$.  

\end{example}

\begin{definition} \label{def: limit preserving} We say that a groupoid $\mxx$ \emph{limit-preserving} if for every directed system $\{ A_i \}_i$ of superalgebras the natural map
\[ \varinjlim \mxx( \spe A_i) \to \mxx (\spe \varinjlim A_i)  \]
is an equivalence of categories. \footnote{It is customary to call this property limit-preserving instead of the more accurate colimit-preserving.  } 
\end{definition}

The following definitions of a superstack and an algebraic superstack are straightforward generalizations of their ordinary counterparts.

\begin{definition}[Superstack]
\label{def: superstack}
A groupoid $\mxx$ is a \emph{superstack} if it satisfies the following conditions: 

\begin{enumerate} 
\item For every pair $x,y \in \mxx(T)$, the isomorphism functor
\[ \on{Isom}(x,y): \sS/T \to \on{Set}: (f: S \to T) \longmapsto \{ \phi: f^*x \lgr f^*y \  |  \ p(\phi)= \on{id}_T \} \] 
is a sheaf in the \'etale topology on $T$. 
\item For any \'etale covering $\{ A \overset{\phi_i}{\longrightarrow} B_i \}$, every descent datum for $\mxx$ relative to $\phi_i$ is effective. 
\end{enumerate} 
\end{definition}

\section{Schlessinger's conditions} 

In this section we provide super versions of Artin's generalization \cite{artin1974versal} of Schlessinger's conditions \cite{schlessinger1968functors}.  

To generalize Schlessinger's conditions,  we consider arbitrary maps of infinitesimal extensions of a given reduced algebra $R_0$ and, in particular, diagrams
\begin{equation} \label{infextr} R' \to R \to R_0, \ \ \on{ker}(R' \to R)= M \end{equation}
 of infinitesimal extensions of $R_0$, where $R' \to R$ is surjective and $M$ is naturally a finite  $R_0$-module.

\begin{remark} \normalfont Schlessinger's original conditions \ref{intro: Schlessinger} are for extensions of $k$. Artin's conditions, as we will see, are formulated for extensions of an arbitrary reduced ring $R_0$. 
The condition that $M$ be a finite $R_0$-module is coming from the fact that any surjection $f: R \to R_0$ can be factored into a finite composite of extensions by finite $R_0$-modules. This is a generalization of the fact that any surjection of local, Artin superalgebras can be factored into a finite composite of extensions by finite dimensional super vector spaces (Lemma \ref{factorization}).








 
\end{remark} 
 


%
In terms of superschemes, we consider arbitrary maps of infinitesimal extensions of a given reduced scheme $T_0$, and, in particular, diagrams 
 \begin{equation} \label{infext} T_0 \hookrightarrow T \hookrightarrow T', \ \on{ker}(i^{-1}\mc{O}_{T'} \to \mc{O}_T)= \mm \end{equation}
 of infinitesimal extensions of $T_0$, where $T \hr T'$ is a closed immersion and $\mm$ is a coherent $\mc{O}_{T_0}$-module.

The following conditions are super versions of Artin's conditions (S1) and (S2) in \cite{artin1974versal}. 
\newline

\textbf{Condition.} (S1). (a)  Let $\mxx$ be a superstack over $\sS$. Let \begin{equation} \label{sonea}
\begin{tikzcd} 
T_0 \arrow[r, hook] & T \arrow[r] \arrow[d, hook] & Y \\
                        & T'                                              &  
\end{tikzcd}
\end{equation} 
be a diagram in $\sS$, where $T_0 \hr T \to T'$ is as in \eqref{infext}.  Assume that the composition $T_0 \hr T \to Y$ is a closed immersion. Let $t \in \mxx(T)$. Then the natural map 
\begin{equation} \label{semihomogeneity} 
\ov{\mxx}_t(T' \sqcup_T Y) \to \ov{\mxx}_t(T') \times \ov{\mxx}_t(Y),
\end{equation}
is surjective.  
\newline

\textbf{Condition.} (S1). (b) Let $i: T_0 \hr Y$ be a closed immersion, $T_0$ a reduced scheme, and let $\mm$ be a coherent $\mc{O}_{T_0}$-module. Let $y \in \mxx(Y)$ such that $t_0 \to y$ is an arrow in $\mxx$ over $i$. Then the natural map 
\begin{equation}  \label{sonebe} \ov{\mxx}_{y}(Y[\mm]) \to \ov{\mxx}_{t_0}(T_0[\mm])) \end{equation}
is bijective. 
\newline

Set 
\begin{equation} \label{defnd}
\omxx_{t_0}(T_0[\mm]) = D_{t_0}(\mm)
\end{equation}
\newline

We check carefully in Lemma \ref{modulestructure} that (S1)b gives $D_{t_0}(\mm)$ a natural $\goto$-module structure, and that when (S1)a holds the additive group $D_{t_0}(\mm)$ acts transitively on the set $\omxx_t(T')$.
\newline 

\textbf{Condition.} (S2) $D_{t_0}(\mm)$ is a coherent $\goto$-module. 

\vspace{2mm} 

Conditions (S1) and (S2) have the obvious reformulations in terms of superalgebras, \emph{i.e.}, replace $\mm$ with its associated $R_0$ module $M$, $T_0 \hr T$ with the associated surjection $R \to R_0$, etc.

\begin{lemma}  \label{modulestructure} \emph{(S1)b} gives $D_{t_0}(\mm)$ a natural $\goto$-module structure, and when \emph{(S1)a} holds the additive group $D_{t_0}(\mm)$ acts transitively on the set $\omxx_t(T')$.  
\end{lemma}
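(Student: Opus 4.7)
My plan is to run the classical Schlessinger--Artin construction in the super setting, noting that all the ring-theoretic identifications of trivial extensions and their fiber products carry over unchanged to supercommutative algebras.

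Step 1 (module structure on $D_{t_0}(\mm)$). The key input is the pushout identification $T_0[\mm] \sqcup_{T_0} T_0[\mm] \cong T_0[\mm \oplus \mm]$ (dually, $R_0[M] \times_{R_0} R_0[M] \cong R_0[M \oplus M]$). Applying (S1)b with $Y = T_0[\mm]$ gives, for each $\xi_1 \in D_{t_0}(\mm)$, a bijection between extensions of $\xi_1$ over $Y[\mm] \cong T_0[\mm \oplus \mm]$ and $D_{t_0}(\mm)$, whose aggregate form is a natural bijection
\[
D_{t_0}(\mm \oplus \mm) \;\cong\; D_{t_0}(\mm) \times D_{t_0}(\mm)
\]
given by restriction to the two summands. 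Addition is then defined as the composite
\[
D_{t_0}(\mm) \times D_{t_0}(\mm) \;\cong\; D_{t_0}(\mm \oplus \mm) \;\xrightarrow{D_{t_0}(+)}\; D_{t_0}(\mm),
\]
using the sum map $\mm \oplus \mm \to \mm$ and the functoriality of $D_{t_0}$ in its module argument. Scalar multiplication by $r \in \goto$ is induced by $r\cdot : \mm \to \mm$, and the zero is the pullback of $t_0$ along $T_0[\mm] \to T_0$. Commutativity, associativity, distributivity, and the scalar axioms all reduce, via the analogous multi-linear bijections $D_{t_0}(\mm^{\oplus n}) \cong D_{t_0}(\mm)^n$, to the corresponding identities on $\mm$.

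Step 2 (action on $\omxx_t(T')$). Taking $Y = T$ in (S1)b produces a bijection $\omxx_t(T[\mm]) \cong D_{t_0}(\mm)$; let $\tilde d \in \omxx_t(T[\mm])$ denote the element corresponding to $d$. The pushout $T' \sqcup_T T[\mm]$ is isomorphic to the trivial extension $T'[\mm]$ (in algebras, $R' \times_R R[M] \cong R'[M]$ via $(r', r+m) \mapsto (r', m)$). Condition (S1)a therefore supplies a joint lift $w \in \omxx_t(T'[\mm])$ restricting to $v$ on $T'$ and to $\tilde d$ on $T[\mm]$. The map $\sigma : R'[M] \to R'$, $(r', m) \mapsto r' + \iota(m)$, where $\iota : M \hookrightarrow R'$ is the inclusion of the kernel of $R' \to R$, is a ring homomorphism (using $M^2 = 0$ in $R'$ and that the $R'$-action on $M$ factors through $R \to R_0$). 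I set $v + d := \sigma^*(w)$.

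Step 3 (well-definedness and transitivity). For well-definedness, if $w_1, w_2 \in \omxx_t(T'[\mm])$ both lift $(v, \tilde d)$, they are both objects of $\omxx_v(T'[\mm])$ and both restrict to $d$ on $T_0[\mm]$; the (S1)b bijection $\omxx_v(T'[\mm]) \cong D_{t_0}(\mm)$ (with $Y = T'$) then forces $w_1 \cong w_2$, so $\sigma^*(w_1) \cong \sigma^*(w_2)$. For transitivity, given $v_1, v_2 \in \omxx_t(T')$, the identification $R' \times_R R' \cong R'[M]$ realizes the two projections as $\pi : (r', m) \mapsto r'$ and $\sigma : (r', m) \mapsto r' + \iota(m)$; (S1)a yields $w \in \omxx_t(T'[\mm])$ with $\pi^*(w) = v_1$ and $\sigma^*(w) = v_2$. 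Letting $d \in D_{t_0}(\mm)$ be the image of $w \in \omxx_{v_1}(T'[\mm])$ under the (S1)b bijection, the uniqueness in (S1)b identifies the restriction of $w$ along $R'[M] \to R[M]$ as $\tilde d$, so $w$ qualifies as a lift of $(v_1, \tilde d)$ in the sense of the action, whence $v_1 + d = \sigma^*(w) = v_2$. The main obstacle is the bookkeeping needed to match up these pushouts and projections and to verify that the (S1)b bijections are compatible with them in the intended way; in the super setting one must additionally check that $\iota$, the sum map, and the augmentations are all $\z_2$-graded so that $D_{t_0}(\mm)$ acquires a supercommutative $\goto$-module structure.
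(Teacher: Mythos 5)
Your proposal is correct and follows essentially the same route as the paper: the module structure is defined exactly as here, via the identification $R_0[M]\times_{R_0}R_0[M]\cong R_0[M\oplus M]$, the (S1)b bijection $D_{t_0}(\mm\oplus\mm)\cong D_{t_0}(\mm)\times D_{t_0}(\mm)$, and pullback along the sum and scaling maps on $\mm$. For the transitive action the paper simply cites Schlessinger's Remark 2.17, whereas you write out that standard argument (identifying $R'\times_R R'\cong R'[M]$ with projections $\pi$ and $\sigma$ and invoking (S1)a for surjectivity and (S1)b for the matching of restrictions); this is a faithful and correct expansion of the step the paper leaves to the reference.
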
 

\begin{proof} 


Let $\phi: B \to R_0$, $M$ and $t_0$ be as in the hypotheses of (S1)b 
and set $F_{t_0}(-)=\omxx_{t_0}^{op}(-)$. 
Our first goal is to prove that the natural map
\begin{equation} \label{original}
    \dtnot(B \times_{R_0} \rnotm) \to F_{t_0}(B) \times \dtnot(\rnotm). 
\end{equation}
is bijective by (S1)b. 

First observe that the map $B \times_{R_0} \rnotm \to B[M]: (b, \phi(b) + r m) \mapsto (b, r m), \ (b \in B, r \in R_0, m \in M)$ is an isomorphism of superalgebras.

For each $b \in F_{t_0}(B)$, the natural map 
\begin{equation} \label{schlesslemms} \dtt(B \times_{R_0} \rnotm) \to \dtt(B) \times \dtnot(\rnotm) \end{equation}
is a bijection since $\dtt(B)=b$ and since $\dtt(B \times_{R_0} \rnotm)=\dtt(B[M])$ is equal to $\dtnot(\rnotm)$ by (S1)b. This proves that  \eqref{original} is a bijection. 


We will now define a $R_0$-module structure on $D_{t_0}(M)=\dtnot(\rnotm \times_{R_0} \rnotm)$ using the bijection \eqref{original} with $B=R_0[M]$.

For the additive structure: For each $t \in D_{t_0}(M)$, there is an arrow $s^*(t) \to t$ in $D_{t_0}(M)$ over \[ s: R_0[M] \times_{R_0} R_0[M] \to R_0[M]: ((r,m), (r,m')) \mapsto (r, m+m'),  \ \ r,r' \in R, \ m,m \in M.  \] 
In particular, we have a map $+: D_{t_0}(M \times M) \to D_{t_0}(M): t \to s^*(t)$. Since $D_{t_0}(M \times M)=D_{t_0}(M) \times D_{t_0}(M)$ by (S1)b, the map $+$ defines an additive structure on $D_{t_0}(M)$. 
Multiplication by $c \in R_0$ is induced by the morphism 
 \[ R_0[M] \to R_0[M]: (a,m) \mapsto (a, cm). \] 
The additive identity (or zero object) in $D_{t_0}(M)$ is just the trivial extension of $t_0$ over $\rnotm$.  



The proof that $D_{t_0}(M)$ acts transitively on $\omxx_t(T')$ can be proved exactly as in \cite[Remark 2.17]{schlessinger1968functors}. 

\end{proof}

Schlessinger's theorem
\cite[Theorem 2.11]{schlessinger1968functors} 
asserts that (S1) and (S2) are enough to
ensure the existence of formally versal, formal objects for $\mxx$.

\begin{theorem} \label{superschless} Suppose\emph{ (S1)} and \emph{(S2)} hold for $\omxx_x$. Then there exists a complete, local, Noetherian superalgebra $R$ and a formally versal, formal object $\whxi \in \wh{\omxx}_x(\on{Spf} R)$. 
\end{theorem}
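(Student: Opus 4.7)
The plan is to follow Schlessinger's original inductive construction from \cite{schlessinger1968functors}, adapted to the super setting with only cosmetic changes.

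First, I would fix the ambient formal parameter space. By (S2) applied with $T_0 = \spe k$ and $\mm = k\epsilon \oplus k\eta$, the super vector space $\omxx_x(\spe \sdual)$ is finite-dimensional over $k$; let $(m|n)$ be its super dimension. Take $S$ to be the completed supersymmetric algebra on the dual, i.e.\ $S = k[[u_1, \ldots, u_m]] \otimes_k \Lambda(\theta_1, \ldots, \theta_n)$ with maximal ideal $\mfr_S$. This is a complete, local, Noetherian superalgebra, and $R$ will be obtained as a quotient of $S$ equipped with a compatible system of lifts of $x$.

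Second, I would inductively construct a descending chain of ideals
\[
\mfr_S = J_1 \supseteq J_2 \supseteq J_3 \supseteq \cdots
\]
together with compatible elements $\xi_n \in \omxx_x(\spe S/J_n)$ lifting $x$. Given $J_n$ and $\xi_n$, consider the collection $\mc{I}$ of ideals $I$ with $\mfr_S \cdot J_n \subseteq I \subseteq J_n$ for which $\xi_n$ admits a lift to $\omxx_x(\spe S/I)$. This family is nonempty (it contains $J_n$) and closed under pairwise intersection: the isomorphism $S/(I_1 \cap I_2) \cong S/I_1 \times_{S/(I_1+I_2)} S/I_2$ combined with (S1)(a) (applied with $T = \spe S/(I_1+I_2)$, $T' = \spe S/I_1$, $Y = \spe S/I_2$) yields a common lift. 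Since $J_n/\mfr_S J_n$ is a finite-dimensional super vector space, the collection $\mc{I}$ has a unique minimal element (intersections stabilise after finitely many steps), which I define to be $J_{n+1}$, together with a witnessing lift $\xi_{n+1}$ compatible with $\xi_n$.

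Third, set $R = \varprojlim_n S/J_n$ (complete, local, and Noetherian by construction) and $\whxi = (\xi_n) \in \wh{\omxx}_x(\spf R)$. To verify formal versality as in \eqref{formallyversallift}, Lemma \ref{factorization} reduces me to treating $(1|0)$- and $(0|1)$-small surjections $B' \to B$ in $C$. Given a map $R \to B$ (which factors through some $R_n = S/J_n$) together with $\eta \in \omxx_x(\spe B')$ restricting to the pullback of $\whxi$, I choose any set-theoretic lift $\tilde\psi \colon S \to B'$ and consider $\tilde\psi^*\xi_{n+1}$; by Lemma \ref{modulestructure} it differs from $\eta$ by an element of the torsor $D_x(\ker(B'\to B))$. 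The minimality of $J_{n+1}$, together with the transitive action of $D_x(\ker(B'\to B))$, allows this difference to be absorbed into a modification of $\tilde\psi$ which then factors through $S/J_{n+1}$, producing the desired lift $R \to B'$ carrying $\whxi$ to $\eta$.

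The main obstacle is the existence of the minimal ideal $J_{n+1}$ at each inductive stage; this is the substantive content of Schlessinger's classical argument, and the role of (S1)(a) is precisely to guarantee that the family of liftable ideals is closed under intersection so that a minimum inside the finite-dimensional super vector space $J_n/\mfr_S J_n$ exists. Since (S1), (S2), and Lemma \ref{factorization} are already stated in the $\z/2$-graded category, no step of the translation requires new ideas beyond tracking even and odd generators in parallel.
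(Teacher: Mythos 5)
Your construction is the same as the paper's (Appendix \ref{schlessinger proof appendix} follows Schlessinger's inductive construction, with the odd variables carried along), and the outline is sound, but two of the steps as you have written them do not quite go through.

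First, in the closure-under-intersection step you apply (S1)(a) to the fiber product $S/I_1 \times_{S/(I_1+I_2)} S/I_2$. To produce an object over this fiber product, (S1)(a) needs objects over $S/I_1$ and $S/I_2$ restricting to a \emph{common} object over the base $S/(I_1+I_2)$. What you have are two lifts of $\xi_n$, and these are only known to agree after further restriction to $S/J_n$; over $S/(I_1+I_2)$ they may be non-isomorphic extensions of $\xi_n$. The standard repair, which the paper uses, is to first enlarge $I_1$ inside $J_n/\mfr_S J_n$ so that $I_1 + I_2 = J_n$ without changing $I_1 \cap I_2$; then the base of the fiber product is $S/J_n$ itself, where the given $\xi_n$ lives, and (S1)(a) applies as stated.

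Second, in the versality check the expression $\tilde\psi^*\xi_{n+1}$ is not yet defined: a ring-homomorphism lift $\tilde\psi\colon S \to B'$ of $S \to S/J_n \to B$ only factors through $S/\mfr_S J_n$ (since $\tilde\psi(\mfr_S J_n) \subseteq \mfr_{B'}\cdot \ker(B'\to B) = 0$), not through $S/J_{n+1}$, and $J_{n+1}$ may be strictly larger than $\mfr_S J_n$. Producing a lift that does factor through $S/J_{n+1}$ is the crux, and it is handled in the paper via the essential-extension dichotomy (Lemma \ref{essentials}): either $S/J_n \times_B B' \to S/J_n$ has a section, or it is essential, in which case the induced map $w\colon S \to S/J_n \times_B B'$ is surjective, (S1)(a) shows that $\xi_n$ lifts to $S/\ker(w)$, and minimality forces $J_{n+1} \subseteq \ker(w)$, so $w$ factors through $S/J_{n+1}$. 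Only after this does the torsor and transitivity argument you invoke take over. Both repairs are exactly Schlessinger's, so no new idea is needed, but as written these two steps are gaps rather than cosmetic omissions.
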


We check carefully in Appendix 
\ref{schlessinger proof appendix} that this result can
be established for superalgebras by the same argument as
in the ordinary case, with care to take account also for
odd directions.

\section{Artin approximation} \label{artapp}

Following \cite{conrad2002approximation}, 
we will deduce Artin approximation from 
the Popescu desingularization theorem.  
The latter is a very difficult result in 
commutative algebra.  Fortunately, as observed
by \cite{moosavian2019existence}, the supercommutative
version of Popescu is easily reduced to the 
original theorem.  We begin by recalling this result:

\begin{lemma}[{\cite[Lemma 7.9]{moosavian2019existence}}] Let $X$ and $Y$ be affine superschemes and let $f: X \to Y$ be a flat morphism such that $f^{-1}(\jj_Y)$ generates $\jj_X$. Then the commutative diagram 
\begin{center}
    \begin{tikzcd}
    X \arrow[r, "f"] \arrow[d] & Y \arrow[d] \\
    X_{\on{ev}} \arrow[r, "f_{\on{ev}}"] & Y_{\on{ev}} 
    \end{tikzcd}
\end{center}
is cartesian, and $f_{\on{ev}}$ is flat. 
\end{lemma}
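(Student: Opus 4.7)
The plan is to translate to algebra and observe that both assertions follow from a single base-change computation. Write $X = \spe A$, $Y = \spe B$, so the hypothesis becomes: $B \to A$ is flat and the extended ideal $\jj_B \cdot A \subseteq A$ equals $\jj_A$. (Here I identify $X_{\on{ev}}$ with $\spe A_{\bos}$ and $Y_{\on{ev}}$ with $\spe B_{\bos}$ in accordance with the paper's convention $A_{\bos} = A/\jj_A$.)

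The key calculation is the identification
\[ B_{\bos} \otimes_B A \;=\; A/\jj_B A \;=\; A/\jj_A \;=\; A_{\bos}, \]
where the middle equality is precisely the hypothesis $\jj_B A = \jj_A$. This says that $A_{\bos}$ is the base change of $A$ along $B \twoheadrightarrow B_{\bos}$.

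From this identification, the cartesian claim is immediate: tensoring both sides with $B$ over $B_{\bos}$ yields
\[ A_{\bos} \otimes_{B_{\bos}} B \;=\; (B_{\bos} \otimes_B A) \otimes_{B_{\bos}} B \;=\; A \otimes_B (B_{\bos} \otimes_{B_{\bos}} B) \;=\; A, \]
so the natural map $\spe A \to \spe A_{\bos} \times_{\spe B_{\bos}} \spe B$ is an isomorphism. Flatness of $f_{\on{ev}}$ is likewise immediate: since $B \to A$ is flat by hypothesis and flatness is preserved under arbitrary base change, the base change $B_{\bos} \to B_{\bos} \otimes_B A = A_{\bos}$ along $B \to B_{\bos}$ is flat.

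There is no real obstacle here; the content of the lemma is essentially the observation that the hypothesis $\jj_B A = \jj_A$ is exactly what is needed to make the bosonic truncation compatible with the given morphism, after which the two conclusions are formal consequences of the fact that tensor product commutes with base change and flatness is stable under base change.
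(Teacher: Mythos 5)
Your proposal proves a different statement from the one the lemma asserts, because it misidentifies $X_{\on{ev}}$. Here $X_{\on{ev}}$ denotes the \emph{even part} $\spe A^+$ (the spectrum of the even subalgebra $\mc{O}(X)^+$), not the bosonic reduction $\spe A_{\bos} = \spe A/\mc{J}_A$. This is forced by the direction of the vertical arrows: a morphism $X \to X_{\on{ev}}$ requires a ring map $\mc{O}(X_{\on{ev}}) \to A$, which exists for the inclusion $A^+ \hookrightarrow A$ but not for the quotient $A \twoheadrightarrow A_{\bos}$. It is also how the lemma is used in the proof of Theorem \ref{pop}, where the square with vertical maps $A^+ \hookrightarrow A$ and $\wh{A}^+ \hookrightarrow \wh{A}$ is shown to be (co)cartesian, i.e.\ $\wh{A} \cong A \otimes_{A^+} \wh{A}^+$; the bosonic reading would give nothing usable there. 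Under your reading the second display is not even well formed: $A_{\bos} \otimes_{B_{\bos}} B$ presupposes a ring map $B_{\bos} \to B$, which does not exist in general.

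Your first computation, $B_{\bos} \otimes_B A = A/\mc{J}_B A = A/\mc{J}_A = A_{\bos}$, is correct, and it does show that the square of bosonic reductions (with the closed immersions $X_{\bos} \hookrightarrow X$ and $Y_{\bos} \hookrightarrow Y$ as the vertical arrows) is cartesian with $B_{\bos} \to A_{\bos}$ flat --- but that is a strictly easier statement. The actual claim, $A \cong A^+ \otimes_{B^+} B$ together with flatness of $B^+ \to A^+$, is not a formal consequence of base change. The even part of the comparison map $A^+ \otimes_{B^+} B \to A$ is the identity on $A^+$, so the content is that the multiplication map $A^+ \otimes_{B^+} B^- \to A^-$ is bijective and that $A^+$ is $B^+$-flat: surjectivity uses the hypothesis $\mc{J}_B A = \mc{J}_A$ (whose odd part says exactly $A^- = A^+\cdot f(B^-)$), while injectivity and the flatness of $B^+ \to A^+$ genuinely require the flatness of $B \to A$. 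Note also that the paper does not prove this lemma; it is quoted from \cite{moosavian2019existence}, so there is no internal proof to compare against --- but as written your argument does not establish the quoted statement.
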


\begin{theorem}[\cite{moosavian2019existence}, Lemma 7.30] \label{pop} Let $(A, \mfr)$ be a local Noetherian superring and suppose that the natural morphism $A^+ \to \wh{A}^+$ is geometrically regular. 

Then, 
\[ \wh{A} = \varinjlim_{\lambda} C_{\lambda}\] 
where $C_{\lambda}$ are smooth $A$-superalgebras. 
\end{theorem}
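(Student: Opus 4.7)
The plan is to reduce to the classical Popescu desingularization theorem, leveraging that $A$ is a finite $A^+$-module. First I would apply the ordinary (non-super) Popescu theorem to the geometrically regular morphism $A^+ \to \wh{A}^+$ of Noetherian rings, obtaining a presentation $\wh{A}^+ = \varinjlim B_{\lambda}$ as a filtered colimit of ordinary smooth $A^+$-algebras.

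Next I would define $C_{\lambda} := B_{\lambda} \otimes_{A^+} A$, endowed with the $\z_2$-grading inherited from $A$, and verify that each $C_{\lambda}$ is smooth over $A$ as a superalgebra in the sense of Def.~\ref{smooth}. Finite type and flatness are immediate from base change. For the fiber-regularity condition, I would use that in a Noetherian supercommutative ring the odd ideal $\jj_A$ is nilpotent (because $A^-$ is finitely generated over $A^+$ and odd elements square to zero in characteristic zero), so $\jj_A$ lies in every maximal ideal and every closed-point residue field $k(\mfr)$ is purely bosonic. The super fiber $C_{\lambda} \otimes_A k(\mfr)$ then reduces to the classical fiber $B_{\lambda} \otimes_{A^+} k(\mfr)$, which is regular by smoothness of $B_{\lambda}$ over $A^+$.

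Finally I would compute
\[ \varinjlim C_{\lambda} = \left( \varinjlim B_{\lambda} \right) \otimes_{A^+} A = \wh{A}^+ \otimes_{A^+} A, \]
and identify the right-hand side with $\wh{A}$. This identification uses that $A$ is a finite $A^+$-module (so that the $\mfr^+$-adic completion of $A$ as an $A^+$-module agrees with $A \otimes_{A^+} \wh{A}^+$) together with the fact that the $\mfr^+$-adic and $\mfr$-adic topologies on $A$ coincide, since $\mfr$ and $\mfr^+ A$ differ by the nilpotent ideal $\jj_A$. I expect the main obstacle to be precisely this identification $\wh{A} = A \otimes_{A^+} \wh{A}^+$: one must verify finiteness of $A$ over $A^+$ and the coincidence of the two completion topologies before concluding. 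Once these structural facts are in place, the super Popescu statement follows mechanically from its classical counterpart.
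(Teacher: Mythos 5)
Your proposal is correct and follows essentially the same route as the paper: apply the classical N\'eron--Popescu theorem to $A^+ \to \wh{A}^+$ and then base change the resulting smooth $A^+$-algebras along $A^+ \to A$. The only divergence is in justifying the key identification $\wh{A} \cong A \otimes_{A^+} \wh{A}^+$: the paper invokes flatness of $A \to \wh{A}$ together with \cite[Lemma 7.9]{moosavian2019existence} to see that the square is (co)cartesian, whereas you argue directly from finiteness of $A$ as an $A^+$-module and the coincidence of the $\mfr$-adic and $\mfr^+A$-adic topologies (valid since $\jj_A$ is nilpotent) --- both justifications are sound.
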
 
\begin{proof} 
Here $\wh{A}^+$ denotes the completion of $A^+$ with respect to the even component $\mfr^+$ of the maximal ideal $\mfr$.  Since $A^+ \to \wh{A}^+$ is geometrically regular, we can use the ordinary Neron-Popescu theorem to conclude that
\[ \wh{A}^+= \varinjlim_{\lambda} C_{\lambda} \] 
with $C_{\lambda}$ smooth $A^+$ superalgebras. Let  $f$ denote the natural map $ A \to \wh{A}$.  $f$ is  well-known to be flat. Furthermore,  it is easy to see that  $f^{-1}(\mc{J}_{\wh{A}})$ generates  $\mc{J}_A$. Therefore, we can apply \cite[Lemma 7.9]{moosavian2019existence} 
to conclude that the diagram
\begin{center}
\begin{tikzcd}
A^+ \arrow[r, "f_0"] \arrow[d] & \wh{A}^+ =\varinjlim_{\lambda} C_{\lambda} \arrow[d] \\
A \arrow[r, "f"]               & \wh{A}            
\end{tikzcd}
\end{center}
is cocartesian. Thus, $\wh{A} = \varinjlim_{\lambda}(C_{\lambda}) \otimes_{A^+} A = \varinjlim_{\lambda} (C_{\lambda} \otimes_{A^+} A)$. The result now follows since $C_{\lambda} \otimes_{A^+} A$ is smooth over $A^+$ and so $C_{\lambda} \otimes_{A^+} A$  is smooth over $A$. 


\end{proof}

\begin{theorem}[Artin approximation] \label{superArt} Let $S$ be a finite type affine superscheme and let $\mxx$ be a limit-preserving superstack. Let $s$ be a closed point in $S$ and let $\wh{\xi} \in \mxx(\spe \wh{\mc{O}}_{S,s})$.  Then for any integer $n \ge 1$, there exists an \'etale morphism $(S', s') \to (S,s): s' \mapsto s$ and an object $\xi \in \mxx(S')$ such that  $\wh{\xi} \vert_{\spe (\mc{O}_{S,s}/\frk{s}^{n})} $ and $ \xi \vert_{\spe(\mc{O}_{S',s'}/\frk{s'}^{n})}$ are isomorphic as objects in $\mxx$ over $\spe (\mc{O}_{S,s}/\frk{s}^{n}) \cong \spe(\mc{O}_{S',s'}/\frk{s'}^{n})$.  
\end{theorem}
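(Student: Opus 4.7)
The plan is to adapt the Popescu-theoretic proof of Artin approximation from \cite{conrad2002approximation} to the super setting, relying on the super N\'eron--Popescu theorem (Theorem \ref{pop}) which the excerpt has already established. Write $A := \oss$ and $\hat A := \whos$, with maximal ideal $\frk{s}$. Since $A^+$ is the local ring at a closed point of a finite-type $\bb$-scheme, the induced map $A^+ \to \hat A^+$ is geometrically regular, so Theorem \ref{pop} presents $\hat A$ as a filtered colimit $\hat A = \varinjlim_\lambda C_\lambda$ of smooth $A$-superalgebras. Because $\mxx$ is limit-preserving, the object $\whxi \in \mxx(\spe \hat A)$ descends: up to isomorphism it is the pullback of some $\xi_\lambda \in \mxx(\spe C_\lambda)$ along the structural morphism $\spe \hat A \to \spe C_\lambda$, for some sufficiently large $\lambda$.

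The problem is now reduced to a lifting question in (super) commutative algebra. Let $\sigma_n : C_\lambda \to A/\frk{s}^n$ be the composition of $C_\lambda \to \hat A$ with the reduction $\hat A \twoheadrightarrow A/\frk{s}^n$. We seek a pointed \'etale neighborhood $(S',s') \to (S,s)$ and an $A$-algebra morphism $\tilde\sigma : C_\lambda \to \mc{O}_{S',s'}$ such that the composition $C_\lambda \to \mc{O}_{S',s'} \to \mc{O}_{S',s'}/\frk{s'}^n$ coincides with $\sigma_n$ under the canonical identification $\mc{O}_{S',s'}/\frk{s'}^n \cong A/\frk{s}^n$. Granted such a $\tilde\sigma$, the pullback $\xi \in \mxx(S')$ of $\xi_\lambda$ along $\tilde\sigma$ satisfies the conclusion of the theorem: both $\whxi|_{\spe(A/\frk{s}^n)}$ and $\xi|_{\spe(\mc{O}_{S',s'}/\frk{s'}^n)}$ are pullbacks of $\xi_\lambda$ along $A$-algebra maps $C_\lambda \to A/\frk{s}^n$ which agree by construction, hence are canonically isomorphic in $\mxx$.

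To build $\tilde\sigma$, we proceed in two steps. First, lift $\sigma_n$ to an $A$-algebra morphism $C_\lambda \to A^h$, where $A^h$ is the super Henselization of $A$. Since $C_\lambda$ is smooth over $A$, Lemma \ref{locallyetalesubscheme} allows us \'etale-locally on $\spe C_\lambda$ to present $C_\lambda$ as \'etale over an affine super space over $A$, and producing the lift then becomes a (super) multi-variable Hensel's lemma statement. Second, because $C_\lambda$ is finitely presented over $A$ and $A^h$ is the filtered colimit $\varinjlim \mc{O}_{S',s'}$ over pointed \'etale neighborhoods of $s$, any $A$-algebra map $C_\lambda \to A^h$ automatically factors through some $\mc{O}_{S',s'}$.

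The main obstacle is the super Hensel step. Following the general strategy of the paper, we reduce to the bosonic case: the even reduction $\sigma_n^+ : C_\lambda^+ \to (A/\frk{s}^n)^+$ lifts to $(A^h)^+$ by the classical multi-variable Hensel's lemma applied to the smooth morphism $C_\lambda^+ \to A^+$, and the odd components of $\tilde\sigma$ are then constructed by a finite iteration, at each step solving a linear equation over the even coefficient ring and using nilpotence of the odd ideal $\mc{J}_{C_\lambda}$. Because any odd ideal is nilpotent of bounded order, the iteration terminates after finitely many steps, so no genuinely new convergence question arises beyond the classical bosonic Hensel input already in place.
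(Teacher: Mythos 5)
Your proposal is correct and shares the paper's skeleton --- super N\'eron--Popescu (Theorem \ref{pop}) writes $\whos$ as a filtered colimit of smooth $\oss$-superalgebras $\blam$, and limit-preservation descends $\whxi$ to some $\xi_\lambda \in \mxx(\spe \blam)$ --- but you implement the remaining section-finding step by a genuinely different device. The paper never invokes the Henselization or any Hensel's lemma: it uses the local structure of smooth morphisms (Lemma \ref{locallyetalesubscheme}) to present $\blam$ as \'etale over a free polynomial superalgebra $E = \oss[x_1, \dots, x_n, \eta_1, \dots, \eta_m]$, lifts the composite $E \to \blam \to \whos \to \oss/\mfr_s^{n}$ to a map $f'\colon E \to \oss$ (trivial, since one merely lifts the images of the free generators), and then defines $S'$ as the fiber product $\spe \blam \times_{\spe E,\, f'} \spe \oss$, which is \'etale over $\spe \oss$ by base change from $\spe\blam \to \spe E$; agreement of the two objects modulo $\mfr_s^{n}$ then comes from the fact that both restrictions are pullbacks of $\xi_\lambda$ along maps $\blam \to \oss/\mfr_s^{n}$ that agree on $E$. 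You instead lift $\sigma_n\colon \blam \to \oss/\mfr_s^{n}$ to the Henselization $\oss^h$ via a super multi-variable Hensel's lemma and factor through an \'etale neighborhood using finite presentation. Both are standard variants of the Conrad--de Jong argument; yours buys a reusable super Hensel/Henselization statement (which the paper does eventually need, in Section 7, via the category $C^*$), but at the cost of having to establish it here, whereas the fiber-product trick gets the \'etale neighborhood for free. Your sketch of the super Hensel step (bosonic Hensel plus a finite iteration over the odd nilpotents) is sound in outline, with one caveat: the reduction should be to the bosonic quotients $\blam/\jj_{\blam}$ over $(\oss)_{\bos}$ rather than to the even parts $\blam^{+}$ over $\oss^{+}$, since it is the former that is smooth by the cartesian-square lemma of \cite{moosavian2019existence}; the even part contains products of odd elements and is not the right coefficient ring for the linear equations you solve at each stage of the iteration.
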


\begin{proof}
We follow 
\cite{conrad2002approximation}, drawing also
from the exposition \cite{alper2015artin}. Since $S$ if of finite type, the natural map $\mc{O}_{S,s}^+ \to \wh{\mc{O}}_{S,s}^+$ is geometrically regular, \cite[7.4.4]{grothendieck1965elements}. 
Thus,
\[ \whos = \varinjlim B_{\lambda}  \] 
where $B_{\lambda}$ are smooth $\oss$-superalgebras, by Theorem \ref{pop}. 
Since $\mxx$ is limit-preserving, there exist some large enough $\lambda$, and an object $\xi_{\lambda} \in \mxx(\spe \blam) $ such that $\whxi \to \xi_{\lambda}$ is an arrow over the natural morphism $\phi: \spe \whos \to \spe \blam$ of superschemes over $\spe \oss$. 

Using this fact, we will now prove that for each $n \ge 1$, there exists a pointed superscheme $(S',s')$ and an object $\xi \in \mxx(S')$ with the following properties: 
\begin{enumerate} 
\item there is a closed immersion
$i: S' \hr \spe \blam:s' \to s$, 

\item the composition $S' \hr \spe \blam \to \spe \oss$ is \'etale, and 

\item $\xi=i^*(\xil) \vert_{\spe(\mc{O}_{S',s'}/\mfr_{s'}^n)} $ and $ \whxi \vert_{\spe(\whos/\mfr_s^n)}$ are isomorphic as objects in $\mxx$ over $\spe(\mc{O}_{S',s'}/\mfr_{s'}^n) \cong \spe(\whos/\mfr_s^n) \cong \spe (\oss/ \mfr_s^n)$. 

\end{enumerate} 

The remainder of the proof concerns the construction 
of $S'$.
\newline

For (1): Since $\blam$ is a smooth $\oss$-superalgebra, $\Omega_{\blam/\oss}^1$ is a locally free $\blam$-module. Let $ s \in U \subset \spe \blam$ be an affine Zariski open neighborhood $s$ on which $\Omega_{\blam/\oss}^1$ trivializes. Choose a basis $du_1, \dots, du_n, d \mu_1, \dots, d \mu_m, \ |du_i|=0, |d \mu_i|=1$ for $\Omega_{\blam/\oss}^1 \vert_U$. The map 
\begin{equation} \label{etalething} \oss[x_1, \dots, x_n, \eta_1, \dots, \eta_m] \to \blam: x_i \mapsto u_i, \eta_i \mapsto \mu_i \end{equation} 
is an \'etale morphism of smooth $\oss$-superalgebras.

Let $E=\oss[x_1, \dots, x_n, \eta_1, \dots, \eta_m]$ and let $g$ denote the map $\oss \hr E$ making $E$ and $\oss$-superalgebra.  
We will now prove that $g$ has a section $f': E \to \whos$ such that $f' \circ g = 1_{\oss}$. 


Consider the extension 
 \[ 0 \to \frak{m}_s^{n} \to \oss \to \oss/\frak{m}_s^{n} \to 0  \]  
 and note that $\oss/\mfr_s^n$ is a local, Artin superalgebra. We have a map $E \to \oss/\frak{m}_s^{n}$ given by the composition $E \to \blam \to \whos \to \oss/\mfr_s^n$.
Since $E$ is an \'etale $\blam$-superalgebra, the map $E \to \oss/\frak{m}_s^{n}$ has a unique lift $f': E \to \oss$. In particular, $f' \circ g=1_{\oss}$ is a section of $g$. 


Now define
$S'$ to be the fiber product of the diagram 
\begin{center}
\begin{tikzcd}
S' \arrow[r] \arrow[d, hook, "i"] & \spe \oss \arrow[d, hook, "f'"]     \\
\spe \blam \arrow[r]         & \mathbb{A}_{\oss}^{n|m} = \spe E
\end{tikzcd}
\end{center}
and set $\xi = i^* \xil$. 
\newline

For (2): The map  $S' \to \spe \oss$ is \'etale because the map $\spe \blam \to \mathbb{A}_{\oss}^{m|n}$ is \'etale. 
\newline

For (3):  This is obvious since $\xi=i^*(\xil)$ and  $\xil \in \mxx(\spe \blam)$ was such that $\xil \vert_{\spe (\oss/\mfr_s^n)} \cong \whxi \vert_{\spe (\whos/\mfr_s^n)}$ as objects in $\mxx$ over $\spe (\oss/\mfr_s^n) \cong \spe (\whos/\mfr_s^n)$. 
\end{proof}

\section{Artin algebraization} 

In this section, we will prove super Artin algebraization
following the ideas of \cite{conrad2002approximation}, 
except that since we have access already to the bosonic
result, we may replace 
the difficult arguments involving associated gradeds 
by induction on the odd nilpotence degree.



In \cite[Theorem 3.2]{conrad2002approximation}, Conrad and de Jong formulate a more general
version of Artin approximation 
which requires 
only  that 
the initial data is an object over a complete, 
local, Noetherian ring.
They also proved more; indeed
the most subtle and difficult part
of their theorem
establishes an isomorphism
 $\on{gr}_{\mfr}(R) \cong \on{gr}_{\mfr_u}(A)$ of graded algebras,
 which was essential for their proof
 of Artin algebraization.  We will
 not require such a statement
 to prove super Artin algebraization (given 
 the bosonic Artin approximation result). 

 \begin{theorem} \label{CDJ} Let $\mathcal{X}$ be a limit preserving superstack. Let $(R, \frak{m})$ be a complete, local, Noetherian superalgebra, and let $\whxi \in \mxx(\spe R)$. Then for every integer $n \ge 1$, there exists 
 
 \begin{itemize} 
 
 \item[\emph{(1)}] a finite type affine superscheme $\on{Spec} A$ and a closed point  $u \in \on{Spec} A$, 
 
 \item[\emph{(2)}] an object $\xi_A$ of $\mc{X}$ over $\on{Spec} A$, 
 
 \item[\emph{(3)}] an isomorphism $\alpha_n: R/\frak{m}^{n} \cong A/\frak{m}_u^{n}$, and
 
 \item[\emph{(4)}] an isomorphism $\wh{\xi} \vert_{\on{Spec}(R/\frak{m}^{n})} \cong \xi_A \vert_{\on{Spec}(A/\frak{m}_u^{n})}$ of objects in $\mxx$ over $\on{Spec}(R/\frak{m}^{n}) \cong \on{Spec}(A/\frak{m}_u^{n})$. 
 
 
 
 \end{itemize} 
 
 \end{theorem}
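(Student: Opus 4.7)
The plan is to reduce to the bosonic Conrad--de Jong algebraization theorem \cite[Theorem 3.2]{conrad2002approximation} by induction on the \emph{odd nilpotence index} $r$ of $R$, defined as the smallest integer with $\jar^{r+1} = 0$; this index is finite since $R$ is Noetherian and $\jar$ is generated by odd, hence nilpotent, elements. The base case $r = 0$ is immediate: $R$ is bosonic, and the restriction of $\mxx$ to bosonic affine schemes remains a limit-preserving stack, so the bosonic theorem produces the desired $(A, u, \xi_A, \alpha_n)$ with $A$ a finite type bosonic algebra, viewed as a superalgebra with trivial odd component.

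For the inductive step, fix $r \ge 1$ and assume the statement for complete local Noetherian superalgebras of odd nilpotence index strictly less than $r$. Set $R' := R/\jar^r$ and $M := \jar^r$; then $R \twoheadrightarrow R'$ is a square-zero extension with kernel $M$, and since $\jar \cdot M \subseteq \jar^{r+1} = 0$, the $R$-action on $M$ factors through $R_{\bos}$, making $M$ a finitely generated super $R_{\bos}$-module. Apply the inductive hypothesis to $\whxi|_{\spe R'} \in \mxx(\spe R')$ at a large order $N \gg n$, producing a finite type affine superscheme $\spe A'$, closed point $u'$, object $\xi_{A'} \in \mxx(\spe A')$, and a compatible isomorphism $\alpha' : R'/\mfr^N \xrightarrow{\sim} A'/\mfr_{u'}^N$ intertwining the two objects in $\mxx$.

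Thicken $A'$ by a super module approximating $M$ as follows. Via $\alpha'$, the super $R_{\bos}/\mfr^n$-module $M/\mfr^n M$ corresponds to a super $A'_{\bos}/\mfr_{u'}^n$-module, which by Noetherianity of $A'_{\bos}$ lifts to a finitely generated super $A'_{\bos}$-module $\widetilde{M}$. Form $A$ as a square-zero super extension of $A'$ by $\widetilde{M}$, with extension class in $\ex(A', \widetilde{M})$ chosen (via $\alpha'$, modulo $\mfr_{u'}^n$) to correspond to the class of $R \to R'$ in $\ex(R', M)$. By construction, $A$ is a finite type superalgebra over $k$ with $A/\mfr_u^n \cong R/\mfr^n$ as superalgebras, where $u$ is the image of $u'$.

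The main obstacle is producing a matching object $\xi_A \in \mxx(\spe A)$ restricting to $\xi_{A'}$ along $\spe A' \hookrightarrow \spe A$ and to $\whxi$ modulo $\mfr_u^n$. We do not have formal versality at our disposal --- that stronger hypothesis is reserved for Theorem \ref{thm: intro algebraization} --- so this step requires care. The plan is to exploit limit preservation of $\mxx$ together with super Artin approximation (Theorem \ref{superArt}): the compatible pair $(\whxi, \xi_{A'})$ arises, via limit preservation, from an object over a finite type affine superscheme which, after \'etale localization near $u$ and shrinking so that the order-$n$ isomorphism is preserved, may be arranged to be $\spe A$ itself, possibly after replacing $A$ by an \'etale neighborhood (which remains of finite type). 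Choosing $N$ sufficiently large relative to $n$ ensures no order-$n$ information is lost. Crucially, the induction on $r$ bypasses the delicate associated-graded arguments \cite[Section 3]{conrad2002approximation} that dominate the bosonic proof, since all odd nilpotent directions are peeled off one layer at a time.
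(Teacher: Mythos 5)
Your strategy---induction on the odd nilpotence index with the bosonic Conrad--de Jong theorem as base case---is genuinely different from the paper's, which instead descends $\whxi$ to a finitely generated subalgebra $B \subseteq R$ via limit preservation, applies super Artin approximation to the (limit-preserving) functor of presentations $\mc{O}_T^{\oplus r|r'} \to \mc{O}_T^{\oplus s|s'} \to \mc{O}_T$ approximating a resolution of $R$ over $\wh{B}_v$, and defines $A$ as the quotient of an \'etale neighborhood $B'$ of $B$ by the image of the approximated presentation; the object $\xi_A$ is then simply the restriction of the descended object along $\spe A \hr \spe B' \to \spe B$. (The paper reserves the induction-on-odd-nilpotence trick for Theorem \ref{superartalg}, where it is used to prove injectivity of $\wh{\alpha}$ by comparison with bosonic algebraization.)

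The gap in your inductive step is precisely at what you call ``the main obstacle,'' and your proposed resolution does not close it. Having built $A$ abstractly as a square-zero thickening of $A'$ by $\widetilde{M}$, you must produce $\xi_A \in \mxx(\spe A)$ restricting to $\xi_{A'}$ and agreeing with $\whxi$ modulo $\mfr_u^n$. Since Theorem \ref{CDJ} assumes no formal versality, there is no lifting mechanism for extending $\xi_{A'}$ across the square-zero immersion $\spe A' \hr \spe A$. Your appeal to limit preservation does not substitute for one: limit preservation produces a finitely generated subalgebra $B \subseteq R$ and an object $\xi_B \in \mxx(\spe B)$ pulling back to $\whxi$, but $\spe B$ has no given map to or from your abstractly constructed $\spe A$, and the assertion that the finite type model ``may be arranged to be $\spe A$ itself, possibly after replacing $A$ by an \'etale neighborhood'' is exactly the content of the theorem being proved, not a consequence of limit preservation. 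To relate $\spe B$ to a finite type algebra whose $n$-th order truncation at a point matches $R/\mfr^n$, one needs the presentation-approximation argument---at which point $A$ is automatically cut out of an \'etale cover of $\spe B$, the object comes for free by restriction, and the induction on $\jar$ is unnecessary. A secondary (more repairable) gap: your construction of $A$ requires lifting the extension class of $R \to R'$ from $\ex(A'/\mfr_{u'}^n, \widetilde{M}/\mfr^n\widetilde{M})$ to $\ex(A', \widetilde{M})$, and such a lift is not automatic; this too would need an approximation argument rather than a bare choice.
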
 
 

 \begin{proof} 
 We follow 
\cite{conrad2002approximation}, drawing
also from the exposition \cite{alper2015artin}.  We recall that any superalgebra is the direct limit of its finitely generated sub-superalgebras. In particular, $R = \varinjlim B_{\lambda}$ where $B_{\lambda}$ are finitely generated (over $k$) sub-superalgebras of $R$.

Since $\mxx$ is limit preserving there exists  large enough $\lambda$, and an object $\xil \in \mxx(\spe \blam)$ such that $\whxi \to \xil$ is an arrow in $\mxx$ over the natural map $\spe R \to \spe \blam$.  
Let $B=\blam$, $V = \spe B$, let $v \in \spe B$ be the image of the unique closed point in $\spe R$. With this notation, let $f$ denote the natural map $\ovv \to R$, and let $\wh{f}$ denote the induced map $\whov \to R$ on the completions. We may add enough
generators to $\blam$ so that the induced map $\wh{f}_*: t_{\ovv}^* \to t_R^*$ on Zariski cotangent spaces is surjective. The map $\wh{f}$ is then surjective by  Lemma \ref{surjcot}


Our next goal is to construct $(\spe A, u)$ and $\xi_A$. Choose a resolution 
 \begin{equation} \label{resolution} (\wh{\alpha}, \wh{\beta}): \ovv^{\oplus r|r'} \overset{\wh{\alpha}}{\longrightarrow} \ovv^{\oplus s|s'} \overset{\wh{\beta}}{\longrightarrow} \ovv \to R  \to 0 \end{equation}
of $R$ and consider the (limit-preserving) functor 
 \begin{align*}
     F & : (\sS/V) \to \on{Sets} \\ 
     {} & (T \to V) \mapsto \{ \text{complexes} \ (\alpha, \beta): \mc{O}_T^{\oplus r|r'} \overset{\alpha}{\longrightarrow} \mc{O}_T^{\oplus s|s'} \overset{\beta}{\longrightarrow} \mc{O}_T \}
 \end{align*}
Observe that \eqref{resolution} is an element of $F(\spe \ovv)$. Since $F$ is limit preserving, we can apply Artin approximation, Theorem \ref{artapp}. This gives for each $n \ge 1$, a superscheme $(V'= \spe B', v')$, an \'etale morphism $(V', v') \to (V,v): v' \to v$, and an object
\[ (\alpha', \beta'): (B')^{\oplus r|r'} \overset{\alpha'}{\longrightarrow} (B')^{\oplus s|s'} \overset{\beta'}{\longrightarrow} B' \] 
in $\xi' \in F(V')$ with the following property: Let $(\alpha_n', \beta_n')$ denote the resolution  modulo $\frk{B'}^{n}$ and let $(\wh{\alpha}_n, \wh{\beta}_n)$ denote the resolution in \eqref{resolution} modulo $\mfr_R^n$ . Then Artin approximation says that $(\wh{\alpha}_n, \wh{\beta}_n) \cong (\alpha_n', \beta_n')$ as objects in $F$. This, moreover, implies that 
\[ R/\frk{R}^{n}= \on{coker}(\wh{\beta}_n) \cong \on{coker}(\beta_n').  \]

Now let $U = \mathbb{V}(\on{im}(\beta')) \subset \spe B'$ be the closed sub-superscheme cut out by $\on{im}(\beta')$ and set $u=v' \in U$. Let $A$ be the superalgebra for which $U=\spe A$. This gives a resolution
\begin{equation} \label{bresol} (B')^{\oplus r|r'} \overset{\alpha'}{\longrightarrow} (B')^{\oplus s|s'} \overset{\beta'}{\longrightarrow} B' \to A \to 0.  \end{equation} 
The composition
\[ \xi_A: U \hr V' \to V \overset{\xi_V}{\longrightarrow} \mxx \]
then defines an object $\xi_A \in \mxx(U)$. Since the resolution in \eqref{bresol} is equivalent to the resolution in \eqref{resolution}, we have that $R/\frk{R}^{n} = \on{coker}(\wh{\beta}_n) \cong \on{coker}(\beta_n') = A/\frk{u}^{n}$ and $\xi_A \vert_{\spe(A/\frk{u}^{n}} \cong \wh{\xi} \vert_{\spe(R/\frk{R}^{n})}$. 

\end{proof}

\begin{theorem}[Artin algebraization] \label{superartalg}  Let $\mxx$ be a limit-preserving superstack and let $(R, \mr)$ be a local, complete, Noetherian superalgebra. Let $\whxi \in \mxx(\spe R)$ be formally versal. Then there exists 
\begin{enumerate}
    \item an affine finite type superscheme $\spe A$ and a closed point $u \in \spe A$, 
    \item an object $\xi_A \in \mxx(\spe A)$, 
    \item an isomorphism $\alpha: R \lgr \wh{A}_{\frk{u}}$ of superalgebras, and
    \item a compatible family of isomorphisms $\wh{\xi} \vert_{\spe(R/\frk{R}^n)} \cong \xi_A \vert_{\spe(A_{\frk{u}}/\frk{u}^n)}$ in $\mxx$ over $R/\mr^{n} \cong A_{\frk{u}}/\frk{u}^{n}$ for all $n \ge 1$. 
\end{enumerate}

\end{theorem}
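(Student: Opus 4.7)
The plan is to follow the Conrad--de Jong strategy \cite{conrad2002approximation}: use Theorem \ref{CDJ} to produce a candidate $(\spe A, u, \xi_A)$ matching $\whxi$ at some finite level, use formal versality to promote this match to a compatible system of maps at all finite levels, and check that the resulting map of complete local superalgebras is an isomorphism. In \cite{conrad2002approximation} the final isomorphism check hinged on a delicate construction of an isomorphism $\on{gr}_{\mfr}(R) \cong \on{gr}_{\mfr_u}(A)$ of associated graded rings; as flagged in the introduction, I would sidestep a super version of that argument by appealing to the bosonic Artin algebraization theorem (which we take as known) together with an induction on the degree of odd nilpotence of $R$.

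First I would apply Theorem \ref{CDJ} with $n=2$ to obtain $\spe A$ affine of finite type, a closed point $u \in \spe A$, an object $\xi_A \in \mxx(\spe A)$, and an isomorphism $\alpha_2 : R/\mr^2 \lgr A/\frk{u}^2$ identifying $\whxi$ and $\xi_A$ at level $2$. In particular $\alpha_2$ induces an isomorphism on the super Zariski cotangent spaces, which will drive the Nakayama argument at the end.

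Next I would construct by induction on $m \geq 2$ a compatible family of superalgebra maps $\alpha_m : R \to A/\frk{u}^m$ transporting $\whxi\vert_{\spe R/\mr^m}$ to $\xi_A\vert_{\spe A/\frk{u}^m}$. Given $\alpha_m$, the surjection $A/\frk{u}^{m+1} \to A/\frk{u}^m$ of local Artin superalgebras decomposes by Lemma \ref{factorization} into a finite sequence of $(1|0)$- and $(0|1)$-small extensions, and formal versality of $\whxi$ lifts the composite $R \to R/\mr^m \to A/\frk{u}^m$ across each such small extension compatibly with objects. Composing these lifts yields $\alpha_{m+1}$, and the inverse limit is a continuous morphism $\alpha : R \to \wh{A}_{\frk{u}}$ together with a compatible family of object isomorphisms at all finite levels.

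The hard part will be verifying that $\alpha$ is an isomorphism. Since super formal versality implies bosonic formal versality, the restriction $\whxi\vert_{\spe R_{\bos}}$ is formally versal in the bosonic substack $\mxx_{\bos}$; the already-known bosonic Artin algebraization theorem, applied alongside the bosonic analogue of the construction above, implies that the induced map $\alpha_{\bos} : R_{\bos} \to \wh{(A_{\bos})}_{\frk{u}_{\bos}}$ is an isomorphism. An induction on the smallest integer $r$ with $\mc{J}_R^{r+1}=0$ then propagates this to the super level: the base case $r=0$ is precisely the bosonic statement just established, while in the inductive step one compares the successive graded pieces $\mc{J}_R^k/\mc{J}_R^{k+1}$ with $\mc{J}_{\wh{A}_{\frk{u}}}^k/\mc{J}_{\wh{A}_{\frk{u}}}^{k+1}$ as finitely generated $R_{\bos}$-modules, using $\alpha_2$ on the first graded piece and Nakayama to propagate to all pieces. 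The main technical obstacle I anticipate lies in executing this filtration-based induction carefully---verifying that $\alpha$ respects the $\mc{J}$-filtrations in the required way and that the cotangent-space isomorphism from the approximation step genuinely supplies the Nakayama input needed at each level.
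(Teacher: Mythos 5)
Your overall architecture matches the paper's: Conrad--de Jong approximation (Theorem \ref{CDJ}) to produce $(\spe A, u, \xi_A)$, formal versality (plus Lemma \ref{factorization}) to build a compatible system of lifts $R \to A/\frk{u}^m$ and hence $\wh{\alpha}: R \to \wh{A}_{\frk{u}}$, surjectivity of $\wh{\alpha}$ via the cotangent-space isomorphism and Lemma \ref{surjcot}, and the classical (bosonic) Artin algebraization theorem to control the even part. The one place you diverge is decisive: you invoke Theorem \ref{CDJ} only at level $n=2$, whereas the paper invokes it at level $N=r$, where $r$ is the smallest integer with $\jar^{r+1}=0$, precisely so that the isomorphisms $\alpha_i: R/\mr^i \lgr A/\frk{u}^i$ are available for all $i \le r$ when proving injectivity.

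This is not cosmetic; your injectivity step has a genuine gap. After your construction you know that $\wh{\alpha}$ is surjective, that its bosonic reduction is an isomorphism (so $\on{ker}(\wh{\alpha}) \subset \jar$), and that $\wh{\alpha}$ is an isomorphism modulo the squares of the maximal ideals. That package does not imply injectivity: take $R=k[[x]][\theta_1,\theta_2]$ and the quotient map onto $k[[x]][\theta_1,\theta_2]/(\theta_1\theta_2)$; the bosonic reductions and the Zariski cotangent spaces agree, yet the kernel $(\theta_1\theta_2)\subset \jar^2$ is nonzero. Your proposed repair --- comparing the graded pieces $\jar^k/\jar^{k+1}$ and ``propagating by Nakayama from $\alpha_2$'' --- cannot close this: $\alpha_2$ carries no information about $\jar^k/\jar^{k+1}$ for $k\ge 2$ (these live inside $\mr^2$), and Nakayama converts generator counts into surjectivity of maps of finitely generated modules, never into injectivity. (It is also unclear what the inductive hypothesis in your ``induction on odd nilpotence degree'' would be applied to, since $\whxi$ restricted to $\spe(R/\jar^k)$ need not remain formally versal.) The missing input is exactly the extra finite-order data the paper extracts by running the approximation to order $r$: since the kernel lies in $\jar$ and $\jar^{r+1}=0$, the finite-level isomorphisms $\alpha_i$ for $i\le r$, combined with the bosonic isomorphism, are what pin the kernel down to zero. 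Keep your construction of $\wh{\alpha}$ and your surjectivity argument, but replace the order-$2$ approximation by the order-$r$ one and redo injectivity along the paper's lines.
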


\begin{proof} Let $r$ be the smallest integer for which $\jar^{r+1}=0$. We can apply Conrad-de Jong approximation at $N=r$ to obtain a finite type pointed superscheme $(\spe A, u)$, an object $\xi_A \in \mxx(\spe A)$, and an isomorphism 
\[ \iota_i: \wh{\xi} \vert_{\spe(R/\frak{m}^i)} \lgr \xi_A \vert_{\spe(A/\frk{u}^i)} \]
over
\[ \alpha_i: \spe A/\frk{u}^i \lgr \spe R/\frk{R}^i \] 
for all $i=1, \dots, r$. 

Since $\wh{\xi}$ is formally versal, we have for each $n \ge 1$ an arrow completing the diagram, 
\begin{center}
\begin{tikzcd}
\spe A/\frk{u}^n  \arrow[d] \arrow[r, "\alpha_n"]                           & \spe R \arrow[d, "\wh{\xi}"] \\
\spe A/\frk{u}^{n+1} \arrow[r, "\xi_A"'] \arrow[ru, "\alpha_{n+1}", dashed] & \mxx  \end{tikzcd}
\end{center}
Taking the projective limit we obtain a morphism $
\wh{\alpha}: R \to \wh{A}_{\frk{u}}$. 

We claim that the morphism $\wh{\alpha}$ is surjective. 
Indeed, the morphism $\wh{\alpha}$ descends to an isomorphism $R/\frak{m}^r \cong \wh{A}_{\frk{u}}/\frk{u}^r$,
hence induces an isomorphism $\mr/\mr^2 \to \muu/\muu^2$ of Zariski cotangent spaces. In particular, the map $\wh{\alpha}$ induces a surjection between Zariski cotangent spaces. The claim that $\wh{\alpha}$ is surjective now follows from  Lemma \ref{surjcot}.  

We now claim that $\wh{\alpha}$ is injective. Let $M= \on{ker}(\wh{\alpha})$. 
By applying the classical Artin algebraization theorem to the bosonic reductions of the objects appearing in the statement of the Theorem, we get an isomorphism \[ R_{\bos} \cong (\wh{A}_{\frk{u}})_{\bos}.  \] Thus $M/\jar=(0)$ and so $M \subset \jar$. Let $m \in M$  and let $r(m)$ largest integer for which \linebreak $m = m \ \  \on{mod} \ \ \mr^{r(m)+1}$. We know such an integer $r(m)$ exists because  $m \in \jar$. Moreover, we must have that $r(m) \le r$. Thus,  $\alpha_{r(m)}(m)=0$ and so $m=0$ since the $\alpha_i$ are isomorphisms.

\end{proof}

\section{Flenner's criterion for openness of formal versality}

\subsection{Extension functor} 
Here we introduce Flenner's 
$\mathrm{Ex}$ functors, following \cite{flenner1981kriterium}
(and somewhat expanding upon the exposition). Throughout this section, let $T=\spe R$ be a finite type affine superscheme.

\begin{definition} For every $v \in \mxx(T)$, and for every coherent $\ot$-module $\mm$, we define $\pmb{\ex}(v, \mm)$ to be the groupoid of pairs $(i: T \hr T', v \to v')$ with $i: T \hr T'$ an extension of $T$ by $\mm$ and $v \to v'$ an arrow in $\mxx$ over $i$. 

\end{definition}

Let
\begin{equation}  \ex(v, -): \on{Coh}(T) \to \on{Set} 
\end{equation} 
be the functor sending a coherent $\ot$-module $\mm$ to the set of isomorphism classes in $\pmb{\ex}(v, \mm)$. 
In general, $\on{Ex}(v, \mm)$ will \emph{not} have a $\got$-module structure. However, there does exist in $\on{Ex}(v, \mm)$ a natural ``zero object", the pair $(T[\mm],  v[\mm])$, where $v[\mm]:= v \times_T T[\mm]$ is the trivial $\mm$-extension of $v$ over $T[\mm]$. An object $(i: T \hr T', v \to v')$ in $\ex(v, \mm)$ is trivial if there exists a section $s: T' \to T$ such that $is=1_T$ and an arrow $v' \to v$ over $s$. 



We have the following natural morphisms of pointed sets (the point is the zero object in each term): 

\begin{equation} \label{morphismseq} 
\begin{tikzcd}
{\derp(\ot, \mm)} \arrow[r, "c"]  & D_v(\mm) \arrow[r, "i"]  & {\ex(v, \mm)} \arrow[r, "\pi"]  & {\ex(T, \mm)}.  \\
\end{tikzcd}
\end{equation}

We remind the reader that $D_v(\mm) = \omxx_v(T[\mm])$ and that $\ex(T, \mm)$ is the set of isomorphism classes of extensions of $T$ by $\mm$.


\begin{itemize}
    \item 
The map $c$ sends a derivation $d$ in $\on{Der}(\ot, \mm)$ to the pullback of $v[\mm]$ under the automorphism $1 + d: T[\mm] \to T[\mm]$, that is 
\[ d \mapsto  c(d)= (1 +d)^*(v[\mm]).  \] 

\item 
The map $i$ takes $v \to v'$ in $D_v(\mm)$ to the pair $(i: T \hr T[\mm], v \to v')$ in $\ex(v, \mm)$. Note that $\on{Der}(\got,\mm)$ acts trivially on $\ex(v,\mm)$ but non-trivially on $D_v(\mm)$. 

\item The map $\pi$ takes the pair $(i: T \hr T', v \to v') \in \ex(v,\mm)$ to $i: T \hr T' $ in $ \ex(T,\mm)$. 
\end{itemize}


\begin{lemma}[Compare to {\cite[Lemma 2.2]{flenner1981kriterium}}] \label{properties}  Using the notation above and assuming that \emph{(S1)} and \emph{(S2)} hold for $\mxx$, we have: 
\begin{enumerate}
\item[\emph{1}.] $\pi(\on{Ex}(v, \mm)) \subset \on{Ex}(T,\mm)$ is an $\got$-submodule
\item[\emph{2}.] $\pi \circ i=0, i \circ c=0$, and
\item[\emph{3}.] $\on{Coker}(c) \to \pi^{-1}(0)$ is bijective. 
\end{enumerate} 
\end{lemma}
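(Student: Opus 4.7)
My plan is to establish each of the three claims in order, using the definitions together with the Schlessinger conditions.

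Part (2) unfolds directly. For $\pi \circ i = 0$: the image $i(v \to v') = (T \hookrightarrow T[\mm], v \to v')$ maps under $\pi$ to the trivial extension $T \hookrightarrow T[\mm]$, the basepoint of $\ex(T, \mm)$. For $i \circ c = 0$: given $d \in \derp(\ot, \mm)$, I would exhibit triviality of $(T \hookrightarrow T[\mm], v \to c(d))$ by taking the retraction $s := s_0 \circ (1+d) : T[\mm] \to T$, where $s_0$ is the canonical projection; one checks $s \circ i = 1_T$ and
\[ s^*(v) = (1+d)^*(s_0^*(v)) = (1+d)^*(v[\mm]) = c(d), \]
so the cartesian arrow $c(d) \to v$ over $s$ witnesses triviality.

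For part (1), I would first observe that $\ex(v,-)$ is a covariant functor on $\on{Coh}(T)$: a module map $f : \mm \to \mm'$ sends $(T \hookrightarrow T', v \to v')$ to $(T \hookrightarrow T'', v \to v'')$, where $T''$ is the pushout extension of $T$ by $\mm'$ along $f$ and $v''$ is the pullback of $v'$ along the induced scheme map $T'' \to T'$. Closure of $\pi(\ex(v, \mm))$ under scalar multiplication by $c \in \got$ follows by applying this functoriality to $c \cdot \on{id}_{\mm}$, since the scalar action on $\ex(T, \mm)$ is defined by the same pushout. For closure under Baer sum: given $(T \hookrightarrow T_i, v \to v_i)$ for $i=1,2$, I would apply (S1)a (with $T' = T_1$, $Y = T_2$) to produce $\tilde v \in \omxx_v(T_1 \sqcup_T T_2)$ restricting to both $v_1$ and $v_2$, then pull back $\tilde v$ along the closed immersion $T_1 + T_2 \hookrightarrow T_1 \sqcup_T T_2$ corresponding to the sum map $\mm \oplus \mm \twoheadrightarrow \mm$ on kernels.

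Part (3) is the main content. I would first produce a surjection $D_v(\mm) \twoheadrightarrow \pi^{-1}(0)$ by choosing, for each class in the fiber over $0$, a trivialization of the underlying extension, giving a representative $(T \hookrightarrow T[\mm], v \to v')$. Two representatives $v_1', v_2'$ yield the same class iff there is an automorphism of $T[\mm]$ over $T$ inducing the identity on $\mm$ -- necessarily of the form $\phi_d = 1+d$ for some $d \in \derp(\ot, \mm)$ by a direct computation -- together with an isomorphism $v_1' \cong \phi_d^*(v_2')$. The critical step, which I expect to be the main obstacle, is the translation identity
\[ \phi_d^*(x) = x + c(d) \qquad \text{for all } x \in D_v(\mm), \]
since this shows $c$ is a homomorphism and identifies the orbit space with $\on{Coker}(c)$. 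I would prove it by factoring $\phi_d = \sigma \circ \nu_d$ as algebra maps, where $\sigma : R[M \oplus M] \to R[M]$ is the sum map defining addition on $D_v(\mm)$ in Lemma \ref{modulestructure}, and $\nu_d : R[M] \to R[M \oplus M]$, $(r, m) \mapsto (r, m, d(r))$, is an algebra map precisely because $d$ is a derivation. Unwinding the (S1)b-bijection $D_v(\mm \oplus \mm) \cong D_v(\mm) \times D_v(\mm)$ via the projections and computing $\pi_1 \circ \nu_d = \on{id}$ and $\pi_2 \circ \nu_d = \phi_d \circ s_0^* \circ i^*$, I find that the $\nu_d$-pullback of $x$ corresponds to $(x, c(d))$, so applying addition ($\sigma$-pullback) yields $\phi_d^*(x) = x + c(d)$.
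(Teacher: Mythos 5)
Your proposal is correct and follows essentially the same route as the paper: part (2) by exhibiting the retraction $s_0\circ(1+d)$, part (1) via the (S1)a surjection onto $\ex(v,\mm)\times\ex(v,\mm)$ followed by the sum map, and part (3) by identifying the fibers of $D_v(\mm)\to\pi^{-1}(0)$ with $\on{im}(c)$-cosets. You in fact supply two details the paper elides --- closure of $\pi(\ex(v,\mm))$ under the scalar action, and the translation identity $\phi_d^*(x)=x+c(d)$ via the factorization $\phi_d=\sigma\circ\nu_d$, which is exactly what justifies the paper's bare assertion that the ``kernel'' consists of the $(1+d)^*v[\mm]$ --- so your write-up is a strictly more complete version of the same argument.
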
 

\begin{proof} For (1):
The fact that $\pi(\ex(v,\mm))$ is closed under addition can be seen from the diagram 
\begin{center}
    \begin{tikzcd}
    \ex(v,\mm) \times \ex(v,\mm)  \arrow[d] & \arrow[l,"{p}"] \ex(v, \mm \times \mm) \arrow[r, "{\on{add}}"] \arrow[d] & \ex(v, \mm) \arrow[d] \\ 
    \ex(T, \mm) \times \ex(T,\mm) &  \arrow[l, "{\sim}"] \ex(T, \mm \times \mm) \arrow[r, "{\on{add}}"] & \ex(T,\mm)
    \end{tikzcd}
\end{center}
where $p$ is surjective by (S1)a. 
\newline

For (2):  Let $v' \in D_v(\mm)$, then $i(v')$ is the pair $(i: T \hr T[\mm], v \to v')$ in $\ex(v, \mm)$ and $\pi(i(v'))$ is the object $i: T \hr T[\mm]$ in $\ex(T, \mm)$. Thus, $\pi \circ i=0$.  Let $d \in \on{Der}(\ot, \mm)$, then $c(d)=(1 + d)^*v[\mm]$ over $(1+d)(T[\mm])$. 
The image $i(c(d))$ is the pair \[ (i: T \hr (1+d)\left(T[\mm] \right), v \to (1+d)^*v[\mm]) \]
in $\ex(v, \mm)$. In $\ex(v, \mm)$, we have that $(1+d)\left( T[\mm] \right) \cong T[\mm]$ and $(1+d)^*v[\mm] \cong v[\mm]$. 
Therefore, $i \circ c=0$.
\newline

For (3): Define $D_v(\mm) \to \pi^{-1}(0)$ by $(v \to v') \mapsto (i: T \hr T[\mm], v \to v')$. This map is clearly surjective. The ``kernel" of this map are arrows $v \to (1+ d)^*v[\mm]$ for all $d \in \on{Der}(\ot,\mm)$.  Therefore, we get a bijection $\on{coker}(c) \to \pi^{-1}(0)$. 
\end{proof}

Coherence properties of $\derp(\ot,\mm)$ and
$\ex(T, \mm)$ are ensured by the following
relationship with the cotangent complex 
$L_{T}^{\bullet}$  (see e.g. \cite[Definition 3.6]{flenner1992analytic}. 
For any $\ot$-module $\mm$, we have the \emph{cotangent functors}
\[  T_{T}^i(\mm):= \on{Ext}_T^i(L_{T}^{\bullet}, \mm). \]

\begin{theorem}[{\cite[Theorem 4.6]{flenner1992analytic}}] There are canonical isomorphisms 
\[ \ex(T, \mm)  \to T_{T}^1(\mm)_{ev}, \ \ \ \on{Der}(\ot, \mm) \to T_{T}^0(\mm)_{ev}\]
where $T_{T}^i(\mm)_{ev}$ denotes the set of even elements in $T_{T}^i(\mm)$ for $i=0,1$ 
\end{theorem}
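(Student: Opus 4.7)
The plan is to exploit the low-degree structure of the cotangent complex, where everything admits an explicit description. Choose a free super-polynomial presentation $R = P/I$ with $P = k[x_1,\dots,x_n;\eta_1,\dots,\eta_m]$ a free supercommutative $k$-algebra and $I$ a graded ideal. Then the super cotangent complex $L_T^\bullet$ is quasi-isomorphic, in degrees $-1$ and $0$, to the two-term complex $I/I^2 \to \Omega^1_{P/k} \otimes_P R$, with differential induced by the universal derivation of $P$. This construction is entirely parallel to the bosonic case but carried out in the $\mathbb{Z}_2$-graded setting, so the cotangent functors $T_T^i(\mm) = \on{Ext}^i(L_T^\bullet, \mm)$ inherit a natural $\mathbb{Z}_2$-grading from which the even subspaces are extracted.

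For the first isomorphism, I would compute $T_T^0(\mm) = \on{Ext}^0(L_T^\bullet, \mm) = \on{Hom}_R(\Omega^1_{R/k}, \mm)$ and then use the universal derivation $d: R \to \Omega^1_{R/k}$ to set up the standard bijection with $k$-linear super derivations $R \to \mm$. Restricting to grading-preserving morphisms, which is the convention adopted throughout the paper, singles out the even part of the Ext group, yielding precisely $\derp(\ot, \mm)$. For the second isomorphism, I would adapt the classical Ext$^1$-classification of square-zero extensions: given a class $[i: T \hr T', \mm]$ in $\ex(T, \mm)$, lift the surjection $P \twoheadrightarrow R$ to a map $P \to R'$; the induced map $I \to \mm$ factors through $I/I^2$ and defines, modulo the image of $\on{Hom}_R(\Omega^1_{P/k} \otimes_P R, \mm)$, an element of $\on{Ext}^1(L_T^\bullet, \mm)$. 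Conversely, such an element yields a square-zero extension via a pushout construction, and the grading-preserving extensions (that is, honest morphisms in the category of superschemes) correspond to the even part.

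The main obstacle, which is bookkeeping rather than conceptual, is to verify that signs and parities propagate correctly through the resolution, the pushout, and the classifying cocycle correspondence, so that the $\mathbb{Z}_2$-grading is genuinely respected at every stage. Because $L_T^\bullet$ is built term by term from graded modules with parity-preserving differentials, and because all constructions above are functorial in $\mm$, this reduces to a careful tracking of gradings; no new odd signs appear that are absent in the bosonic case. Thus the argument is a direct super-extension of \cite[Theorem 4.6]{flenner1992analytic}, and once these verifications are in place one may simply cite that result for the final statements.
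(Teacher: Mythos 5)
The paper gives no proof of this statement at all --- it is imported verbatim as a citation to Flenner's Theorem 4.6, which itself proceeds exactly as you describe (low-degree truncation $I/I^2 \to \Omega^1_{P/k}\otimes_P R$ of the super cotangent complex, $\on{Ext}^0$ identified with grading-preserving derivations, $\on{Ext}^1$ classifying square-zero extensions, with the even part singled out by parity-preservation). Your sketch is correct and is essentially the same approach as the cited source; the only nit is that you label the two isomorphisms in the opposite order from the statement.
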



 We consider the following sheaves: 

\begin{enumerate} 

\item Let $\mc{E}x(v, \mm)$ be the sheaf of pointed sets over $T$ associated to the presheaf  \linebreak $T \supset U \mapsto \on{Ex}(v \vert_U, \mc{M} \vert_U)$.

    \item Let $\mc{D}_v(\mm)$ be the sheaf over $T$ associated to the module $D_v(\mm)$.
    
 \item Let $\mc{D}er(\ot, \mm)$ be the sheaf of over $T$ associated to the module $\on{Der}(\ot, \mm)$. 
    
\end{enumerate} 

$\exc(v, \mm)_t$, etc. will denote the stalk at a closed point $t \in T$.  Note that Lemma \ref{properties} holds for the sheaves $\mc{D}er(\ot, \mm)$, $\exc(v, \mm)$ and $\mc{D}_v(\mm)$.

The main result we need about the Ex functors is 
Flenner's criterion for formal versality: 

\begin{theorem}[Compare {\cite[Theorem 3.2]{flenner1981kriterium}}] \label{flennerm} Let $v$ be an object in $\mxx$ over an affine superscheme $T$. Assume that \emph{(S1)} and \emph{(S2)} are satisfied. Then the following statements are equivalent. 
\begin{enumerate}
    \item[\emph{(1)}] $v$ is formally versal, 
    \item[\emph{(2)}] $\exc(v, k(t))_t=\exc(v, \Pi k(t))_t=0$. 
    \item[\emph{(3)}] $\exc(v, \mm)_t=0$ for every coherent $\otr$ module $\mm$.

\end{enumerate}
\end{theorem}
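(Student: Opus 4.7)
The plan is to prove the implications $(3) \Rightarrow (2) \Rightarrow (1) \Rightarrow (2) \Rightarrow (3)$. The implication $(3) \Rightarrow (2)$ is immediate since $k(t)$ and $\Pi k(t)$ are coherent $\ot$-modules, so I will focus on the other three implications.

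For $(2) \Rightarrow (1)$, I will verify the lifting property \eqref{nei} directly. Given a surjection $B' \to B$ of Artin superalgebras with residue field $k(t)$, a map $\spe B \to T$ whose closed point maps to $t$, and an object of $\mxx$ over $\spe B'$ compatible with $v$, I first apply Lemma \ref{factorization} to factor $B' \to B$ into a sequence of $(1|0)$- and $(0|1)$-small extensions, so it is enough to handle the case where $\on{ker}(B' \to B)$ is $k$ or $\Pi k$. Pushing this kernel forward along $\spe B \to T$ yields a skyscraper coherent sheaf $\iota_{t,*} k(t)$ (respectively $\iota_{t,*}\Pi k(t)$) on $T$, and the given data naturally assembles into a class in $\mc{E}x(v,\, \iota_{t,*} k(t))_t$ (respectively the $\Pi$ analogue). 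Unpacking the sequence \eqref{morphismseq} and Lemma \ref{properties} shows that the triviality of this class is equivalent to the existence of the required lift $R \to B'$ compatible with the $\mxx$-object, and hypothesis $(2)$ supplies this triviality.

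For $(1) \Rightarrow (2)$, I show conversely that every class $c \in \mc{E}x(v, k(t))_t$ is trivial. A representative of $c$ is a pair $(i \colon T_U \hookrightarrow T_U',\, v\vert_U \to v')$ with $i$ an extension by the skyscraper $k(t)$ over some open neighborhood $U$ of $t$. Since $k(t)$ is supported only at $t$, the extension is trivial on $T_U \setminus \{t\}$, so trivializing $c$ on the stalk reduces to splitting $T_U' \to T_U$ compatibly with $v \to v'$ after pullback to each Artin thickening $\spe(\ott/\mfr_t^{n})$. Each such pullback is an Artin surjection of superalgebras equipped with an extension of $v\vert_{\spe(\ott/\mfr_t^{n})}$, and formal versality \eqref{nei} applied to it produces the desired splittings; an inverse limit over $n$, together with the observation that $\mc{E}x(v, k(t))_t$ is determined by its behavior on the formal neighborhood of $t$, then trivializes $c$. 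The $\Pi k(t)$ case is identical.

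Finally, for $(2) \Rightarrow (3)$, I note that $\mc{E}x(v, \mm)_t$ depends only on the stalk $\mm_t$, so I may assume $\mm$ is annihilated by some power of $\mfr_t$. Then $\mm_t$ admits a finite filtration $0 = \mm_0 \subset \mm_1 \subset \cdots \subset \mm_N = \mm_t$ whose successive quotients are each isomorphic to $k(t)$ or $\Pi k(t)$; applying the half-exactness of the functor $\mm \mapsto \mc{E}x(v, \mm)_t$ to each short exact sequence in the filtration and invoking $(2)$, induction on $N$ yields the desired vanishing. The main obstacle of the whole argument is precisely this half-exactness property: although it concerns pointed sets rather than abelian groups, it is the content of Lemma \ref{copr} in this paper, whose proof proceeds by a careful decomposition according to parity and a reduction to Flenner's bosonic half-exactness theorem \cite[Theorem 6.1]{flenner1981kriterium}.
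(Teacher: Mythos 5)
The directions $(1)\Leftrightarrow(2)$ and $(3)\Rightarrow(2)$ in your proposal follow essentially the same route as the paper (pushout along $\spe B \to T$ to produce a class in $\exc(v,k(t))_t$, and reduction of an extension by $k(t)$ to its Artin truncations, where only one sufficiently large $n$ is needed rather than an inverse limit). The problem is in $(2)\Rightarrow(3)$, where there is a genuine gap. You assert that since $\exc(v,\mm)_t$ depends only on the stalk $\mm_t$, you ``may assume $\mm$ is annihilated by some power of $\mfr_t$.'' This does not follow: the stalk $\mm_t$ is a finitely generated module over the local Noetherian ring $\ott$, and it has finite length only when $\mm$ is supported at $t$. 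For $\mm = \ot$, say, no power of $\mfr_t$ kills $\mm_t$, so your dévissage by a finite filtration with quotients $k(t)$ and $\Pi k(t)$ never gets started. The finite-length case is indeed handled exactly as you describe (half-exactness plus induction), but the entire difficulty of this implication is the passage from the finite-length quotients $\mm/\mfr_t^n\mm$ to $\mm$ itself. In the paper this is Lemma \ref{extcon}: the canonical map $\ex(v,M) \to \varprojlim_n \ex(v, M/\mfr^n M)$ has trivial preimage of the basepoint, which rests on the Artin--Rees-type finiteness statement for half-exact functors (Lemma \ref{copr} and Corollaries \ref{indy}, \ref{extid}). Your argument silently assumes this continuity statement.

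Relatedly, you have misidentified the role of Lemma \ref{copr}. Half-exactness of $D_v$ is Lemma \ref{halfexact} (an easy consequence of (S1)a), and the corresponding statement for $\ex(v,-)$ is Remark \ref{halfexactext}; these are what power the dévissage. Lemma \ref{copr} is not the half-exactness property but the Vergleichssatz: for a half-exact functor $F$ and a graded module $\bigsqcup_i M_i$ finite over the Rees algebra $B_{\tura}(R)$, the module $\bigsqcup_i F(M_i)$ is again finite. That is the ingredient your proof is missing, and it is needed precisely at the step you skipped.
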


We prove this result in Appendix \ref{flenner proof
appendix}.  The proof follows Flenner's original argument
except at the (important) lemma \ref{copr}, 
where we are able to reduce
to the bosonic case rather than recapitulating the proof.



\subsection{Criteria for openness of formal versality} 

We turn to the proof of
openness of formal versality. Given a topological space $X$, we say that a subset $E \subseteq X$ is is \emph{ind-constructible} in $X$ if, for all $x \in X$, there exists an open neighborhood $U \ni x$, such that $E \cap U$ is a union of constructible sets. A constructible set is a finite disjoint union of locally closed sets. 
We will need the following constructibility criterion (see e.g., \cite[Proposition 1.9.10]{grothendieck1965elements} for the ordinary version). 

\begin{proposition} \label{indcon} Let $X$ be a locally Noetherian superscheme and let $E$ be a subset of $X$. For $E$ to be ind-constructible it is necessary and sufficient that for all  $x \in X$, the intersection $E \cap \ov{ \{ x \} }$ is a neighborhood of $x$ contained in $\ov{ \{x \} }$. 
\end{proposition}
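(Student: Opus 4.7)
The plan is to reduce this to the already-known bosonic case, namely \cite[Proposition 1.9.10]{grothendieck1965elements}. The observation to exploit is that the notions appearing in the statement—locally Noetherian, ind-constructible, constructible, locally closed, neighborhood, closure $\overline{\{x\}}$—are all purely \emph{topological}, depending only on the underlying topological space of $X$ and the subset $E$.

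First I would remark that a superscheme $X$ and its bosonic reduction $X_{\bos} = (X, \mcox/\jj_X)$ share the same underlying topological space. Indeed, the closed immersion $X_{\bos} \hookrightarrow X$ is defined by the nilpotent ideal sheaf $\jj_X$, which induces a homeomorphism on topological spaces (just as passage to the nilreduction does for ordinary schemes). Hence the lattice of open/closed/locally closed subsets of $X$ agrees with that of $X_{\bos}$, the notion of neighborhood agrees, and the closure $\ov{\{x\}}$ computed in $X$ coincides with the one computed in $X_{\bos}$.

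Next, I would verify that $X$ is locally Noetherian as a superscheme if and only if $X_{\bos}$ is locally Noetherian as an ordinary scheme. This follows from the fact that a supercommutative ring $A$ is Noetherian if and only if $A_{\bos} = A/\jj_A$ is Noetherian and $\jj_A$ is a finitely generated $A_{\bos}$-module, together with the fact that the topologies on $\spe A$ and $\spe A_{\bos}$ agree. In particular, covering $X$ by Noetherian affine super-opens gives a cover of $X_{\bos}$ by Noetherian affine opens with the same underlying topological spaces.

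Given these reductions, the subset $E \subseteq |X| = |X_{\bos}|$ is ind-constructible in $X$ exactly when it is ind-constructible in $X_{\bos}$ (both conditions are verbatim the same topological statement). Likewise, the pointwise condition on $E \cap \ov{\{x\}}$ is purely topological. Therefore the equivalence is immediate from \cite[Proposition 1.9.10]{grothendieck1965elements} applied to $X_{\bos}$. The only step that requires any thought is verifying the topological identification of $X$ and $X_{\bos}$ together with the preservation of the locally Noetherian hypothesis; neither step presents a genuine obstacle, but care is needed to observe that \emph{no} super-structure enters the conditions of the proposition, which is what permits the clean reduction.
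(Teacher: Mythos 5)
Your proposal is correct, and it is essentially the argument the paper leaves implicit: the paper gives no proof of Proposition \ref{indcon}, merely citing \cite[Proposition 1.9.10]{grothendieck1965elements} for the ordinary version, and your observation that every condition in the statement is purely topological and that $|X| = |X_{\bos}|$ (with local Noetherianity preserved) is exactly the justification that citation presupposes.
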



\begin{lemma}[Compare to {\cite[Lemma 4.1]{flenner1981kriterium}}] \label{tech} For every $T \in \sS$, and $v \in \mxx(T)$, suppose the following conditions are satisfied
\begin{enumerate}
    \item[\emph{1.}] Let $T'$ be a closed subspace of $T$. Suppose also that $T'$ is integral. Then there exists a dense, open Zariski subset $V' \subseteq T'$ such that 
    \[ \on{Supp}(\exc(v, \mc{O}_{T'})) \cap V' = \{ t \in V' : \exc(v, k(t))_t \neq 0 \}  \]
   \[ \on{Supp}(\exc(v, \Pi \mc{O}_{T'})) \cap V' = \{ t \in V' : \exc(v, \Pi k(t))_t \neq 0 \}  \]
  In other words, $V'$ as the special property that $\exc(v, \mm)_t=0$ (with $t \in V'$)  only when  $\mm$ is $\mc{O}_{T'}$ or when $\mm$ is $\Pi \mc{O}_{T'}$.

    \item[\emph{2.}] For every coherent $\otr$-module $\mm$, the support of $\exc(v,\mm)$ is Zariski closed in $T$. 
    
\end{enumerate}

Then the set of points $t \in T$ at which $v$ is formally versal over $\mxx$ is a Zariski open subset of $T$. 
\end{lemma}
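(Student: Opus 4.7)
The plan is to show the complement $N \subseteq T$ of the formal-versality locus is Zariski closed. By Theorem \ref{flennerm},
\[ N = \{ t \in T \text{ closed} : \exc(v, k(t))_t \neq 0 \text{ or } \exc(v, \Pi k(t))_t \neq 0 \}. \]
Since $T$ is Noetherian, it is enough to show that $N$ is both ind-constructible (by Proposition \ref{indcon}) and stable under specialization. Both will be extracted from hypotheses (1) and (2); the second step additionally uses the half-exactness of $\exc(v, -)$ on coherent $\ot$-modules (Lemma \ref{copr}), which is the main technical ingredient.

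For ind-constructibility, fix $x \in T$ and let $T' = \overline{\{x\}}$ with its reduced subscheme structure, so $x$ is the generic point of the integral scheme $T'$. Hypothesis (1) produces a dense Zariski open $V' \subseteq T'$ realizing the stated set equalities, with $x \in V'$. If $x \in N$, say $\exc(v, k(x))_x \neq 0$, then $x$ lies in $\operatorname{Supp}(\exc(v, \mc{O}_{T'}))$ by (1); by (2) this support is closed in $T$ and hence contains $\overline{\{x\}} = T'$, so a second application of (1) on $V'$ gives $V' \subseteq N$. If $x \notin N$, then (1) puts $x$ outside both $\operatorname{Supp}(\exc(v, \mc{O}_{T'}))$ and $\operatorname{Supp}(\exc(v, \Pi \mc{O}_{T'}))$; by (2) these supports are closed, so the intersection of $V'$ with their complements is an open neighborhood of $x$ in $T'$ disjoint from $N$. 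In either case the criterion of Proposition \ref{indcon} holds.

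For stability under specialization, suppose $\eta \rightsquigarrow t$ with $\eta \in N$; say $\exc(v, k(\eta))_\eta \neq 0$ (the $\Pi$ case is symmetric). Let $Z = \overline{\{\eta\}}_{\red}$, which is integral with generic point $\eta$ and contains $t$. As in the previous paragraph, $Z \subseteq \operatorname{Supp}(\exc(v, \mc{O}_Z))$, so the stalk $\exc(v, \mc{O}_{Z,t}) \neq 0$. Since $t$ is a closed point, $\mc{O}_{Z,t}$ is an Artinian local super-ring; its $\mathfrak{m}_{Z,t}$-adic filtration
\[ \mc{O}_{Z,t} \supset \mathfrak{m}_{Z,t} \supset \mathfrak{m}_{Z,t}^2 \supset \cdots \supset \mathfrak{m}_{Z,t}^N = 0 \]
is finite, with each successive quotient $\mathfrak{m}_{Z,t}^i/\mathfrak{m}_{Z,t}^{i+1}$ a finite direct sum of copies of $k(t)$ and $\Pi k(t)$. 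Invoking half-exactness of $\exc(v, -)$ (Lemma \ref{copr})---that is, for every short exact sequence $0 \to A \to B \to C \to 0$ of coherent modules, non-vanishing of $\exc(v, B)$ forces non-vanishing of $\exc(v, A)$ or $\exc(v, C)$---I can propagate non-vanishing of $\exc(v, \mc{O}_{Z,t})$ through the filtration, finding some graded piece with non-vanishing $\exc$-stalk at $t$ and therefore $\exc(v, k(t))_t \neq 0$ or $\exc(v, \Pi k(t))_t \neq 0$. Thus $t \in N$. The main obstacle is precisely this propagation: $\exc$ admits no long exact sequence, only the weaker half-exact property, so the argument must be organized around a finite filtration whose graded pieces are semisimple super-vector spaces, and this is exactly what Lemma \ref{copr} enables.
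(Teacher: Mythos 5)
Your overall strategy has two genuine problems. First, the top-level criterion you invoke is false: in a Noetherian scheme an ind-constructible set that is stable under specialization need not be closed (the set of closed points of $\mathbb{A}^1$ is a union of constructible sets and is stable under specialization, but it is dense and not closed). The EGA criterion the paper uses goes the other way: an ind-constructible set stable under \emph{generalization} is \emph{open}. So Proposition \ref{indcon} must be applied to the versal locus $V = T \setminus N$, not to $N$. Your case analysis happens to contain what is needed --- the ``$x \notin N$'' branch shows $V \cap \overline{\{x\}}$ is a neighborhood of $x$ in $\overline{\{x\}}$ for $x \in V$, and stability of $N$ under specialization is the same as stability of $V$ under generalization --- but as organized the inference ``therefore $N$ is closed'' does not follow. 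Relatedly, to make sense of membership in $N$ at non-closed points (which both halves of your argument require) you should work with $N = \bigcup_{\mm} \on{Supp}(\exc(v,\mm))$, equivalently $V = \bigcap_{\mm} \bigl(T \setminus \on{Supp}(\exc(v,\mm))\bigr)$, via the equivalence of (2) and (3) in Theorem \ref{flennerm}; the expression $\exc(v, k(x))_x$ does not parse for non-closed $x$, since $k(x)$ is then not a coherent $\mc{O}_T$-module.

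Second, the specialization step contains a false claim and an unnecessary detour. For a specialization $\eta \rightsquigarrow t$ with $t \neq \eta$, the local ring $\mc{O}_{Z,t}$ of $Z = \overline{\{\eta\}}$ at $t$ has positive dimension and is \emph{not} Artinian, so its $\mathfrak{m}_{Z,t}$-adic filtration is infinite and the propagation through finitely many graded pieces does not run. (Relating the stalk $\exc(v,\mc{O}_Z)_t$ to the truncations $\mc{O}_Z/\mathfrak{m}_t^n\mc{O}_Z$ is exactly the content of Lemma \ref{extcon}, which rests on the comparison theorem, not merely on half-exactness; also, half-exactness is Lemma \ref{halfexact} and Remark \ref{halfexactext}, not Lemma \ref{copr}.) In fact no filtration argument is needed: once $\exc(v,\mc{O}_Z)_t \neq 0$, the contrapositive of the implication $(2) \Rightarrow (3)$ in Theorem \ref{flennerm} already yields $\exc(v,k(t))_t \neq 0$ or $\exc(v,\Pi k(t))_t \neq 0$. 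Simpler still --- and this is what the paper does --- for each coherent $\mm$ the set $T \setminus \on{Supp}(\exc(v,\mm))$ is open by hypothesis (2), hence stable under generalization, and $V$ is the intersection of these sets over all $\mm$; stability of $V$ under generalization is then immediate, with no need to pass through $\mc{O}_Z$ at all.
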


\begin{proof} 

Let
\[V= \{ t \in T \ | \ \exc(v, k(t))_t=\exc(v, \Pi k)_t= 0 \}.  \]
From Theorem \ref{flennerm}, we have that $V$ is the set of closed points in $T$ at which $v$ is formally versal and that \[ V= \bigcap_{\mm \in \on{Coh}(T)} \{ t \in T \ | \ \exc(v, \mm)_t =0 \}.  \]

The goal is to prove that $V$ is open.  We will do this by applying the following theorem from EGA, \cite[Theorem 1.10.1]{grothendieck1965elements}: If a set is ind-constructible and stable under generalization, then it is open. 

We first prove that $V$ is stable under generalization. Let $t \in V$, and suppose that $t \in \ov{\{ t' \} }$ for some $t' \in T$. We claim that $t' \in V$. Fix some $\mm$. Since $\exc(v, \mm)$ is closed (by assumption (2)), 
the set  
\[ V_M : = T - \on{Supp}(\exc(v, \mm)) = \{ t \in T \ | \  \exc(v, \mm)_t =0 \} \]
is open. Since $V_M$ is open, it is stable under generalization, and so $t' \in V_M$.
Repeating this argument for every $\mm$, we find that $t' \in V$.

We now prove that $V$ is ind-constructible using Proposition \ref{indcon}.  Let $T'$ be an integral closed subscheme of $T$ which meets $V$.  Let
\begin{align*}
V'' & = T' - \left ( \on{Supp}(\exc(v, \mc{O}_{T'})) \cap \on{Supp}(\exc(v, \Pi \mc{O}_{T'}))  \right ) \\
{} & =\{ t \in T' \ | \ \exc(v, \mc{O}_{T'})_{t}=\exc(v, \Pi \mc{O}_{T'})_{t} = 0 \}.
\end{align*}   
$V''$ is open by condition (2). $V'' \neq \emptyset$ since $V \cap T' \neq \emptyset$ and so by Proposition \ref{flennerm}  there exists at least one $t \in T'$ such that $\ex(v, \mm)_t=0$ for all  $\mm$. Set
\[ U' = V'' \cap V'. \] 
It follows from the definition of $V'$, that the set $U'$ is equal to the subset \linebreak $\{ t \in V' \ | \ \exc(v,k(t))=\exc(v, \Pi k(t))=0 \} $ of $V \cap T$. Since $V'$ is Zariski-dense in $T'$, so is $U'$, and, thus, $V$ is ind-constructible.

\end{proof}

For $v$ an object in $\mxx$ over $T$, and $\mm$ a coherent $\otr$-module, define  $\mc{O}b_v(\mm)$ to be the sheaf over $T$ associated to the presheaf sending $U \subseteq T$ to $\ex(U, \mm \vert_U)/\pi(\ex(v \vert_U, \mm \vert_U)$.  $\mc{O}b_v(-)$ is clearly functorial in $\mm$. 

We now introduce the following conditions. \footnote{The labels (A3) and (A4) are chosen to match Theorem \ref{mainth}. }
\newline

\textbf{Condition} (A3). Coherence of $\mc{O}b$: For all $v, T, \mc{M}$ as above, $\mc{O}b_v(\mc{M})$ is a coherent $\ot$-module, and
\newline

\textbf{Condition} (A4). Constructibility: Let $v, T$ be as above and suppose that $T' \subseteq T$ is an irreducible reduced subspace of $T$. Then there is a Zariski open dense subset $V'\subseteq T'$ such that for each $t \in V'$, the canonical morphisms 
\begin{itemize}
    \item[(a)] $\mc{D}_v(\otpr) \otimes k(t) \to \mc{D}_v(\otpr \otimes k(t))_t$  and $\mc{D}_v(\Pi \otpr) \otimes k(t) \to \mc{D}_v(\Pi \otpr \otimes k(t))_t$ are  bijective, and
    \item[(b)] $\mc{O}b_v(\otpr) \otimes k(t) \to \mc{O}b_v(\otpr \otimes k(t))_t$ and $\mc{O}b_v(\Pi \otpr) \otimes k(t) \to \mc{O}b_v(\Pi \otpr \otimes k(t))_t$  are injective. 
\end{itemize}



\begin{theorem}[Compare {\cite[Theorem 4.3]{flenner1981kriterium}}]  \label{flenneropen}  Let $\mxx$ be a superstack satisfying conditions \emph{(S1), (S2), (A3)} and \emph{(A4)}. 
Then for every superscheme $T$ and every $v \in \mxx(T)$, the subset of points $t$ in $T$ at which $v$ is formally versal form a Zariski-open subset of $T$. 
\end{theorem}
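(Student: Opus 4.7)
The plan is to invoke Lemma \ref{tech}; since openness is Zariski-local, we may first cover $T$ by affine opens and reduce to $T \in \sS$, whereupon the task becomes verifying both hypotheses of Lemma \ref{tech} from the assumptions \emph{(S1), (S2), (A3)}, and \emph{(A4)}.

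For Lemma \ref{tech}(2), I would show that $\on{Supp}(\exc(v,\mm))$ is Zariski closed for every coherent $\otr$-module $\mm$. From Lemma \ref{properties} the pointed set $\exc(v,\mm)$ sits in the structure
\[ \on{coker}(c) \lgr \pi^{-1}(0) \hookrightarrow \exc(v,\mm), \qquad \pi(\exc(v,\mm)) = \on{ker}\!\left( \exc(T,\mm) \twoheadrightarrow \obv(\mm) \right), \]
so $\exc(v,\mm)_t = 0$ iff both $\on{coker}(c)_t = 0$ and $\exc(T,\mm)_t \to \obv(\mm)_t$ is injective. Now $\mc{D}_v(\mm)$ is coherent by \emph{(S2)}; $\mc{D}er(\ot,\mm)$ and $\exc(T,\mm)$ are coherent as the cotangent modules $T^0_T(\mm)_{ev}$ and $T^1_T(\mm)_{ev}$; and $\obv(\mm)$ is coherent by \emph{(A3)}. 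Each of the two failure loci is therefore the support of a coherent sheaf and hence closed, and their union $\on{Supp}(\exc(v,\mm))$ is Zariski closed.

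For Lemma \ref{tech}(1), I would fix an integral closed subspace $T' \subseteq T$. Combining \emph{(A4)} with generic flatness over $T'$ of the coherent sheaves produced above, and with the standard base change for the cotangent functors $T^i_T$, I can shrink to a dense Zariski open $V' \subseteq T'$ such that for $t \in V'$ the comparison maps $\mc{F}(\otpr) \otimes k(t) \to \mc{F}(\otpr \otimes k(t))_t$ are bijective for $\mc{F} \in \{\mc{D}_v,\mc{D}er(\ot,-),\exc(T,-)\}$, while $\obv(\otpr) \otimes k(t) \to \obv(\otpr \otimes k(t))_t$ is injective; the parallel statements with $\Pi\otpr$ replacing $\otpr$ hold on the same (possibly further shrunk) $V'$. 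Nakayama applied to the coherent pieces $\pi(\exc(v,\otpr))$ and $\on{coker}(c)$ from step one then shows that each vanishes at $t$ iff its tensor with $k(t)$ vanishes; transporting across the comparison maps yields, for $t \in V'$, $\exc(v,\otpr)_t = 0$ iff $\exc(v,k(t))_t = 0$, and analogously in the odd case, which is exactly what Lemma \ref{tech}(1) demands.

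The main obstacle is step one: because $\exc(v,\mm)$ is merely a pointed set and not a coherent module, closedness of its support is not automatic, and the whole argument depends on exploiting the exact-sequence-of-pointed-sets structure of Lemma \ref{properties} to reduce the question to supports of bona fide coherent modules. Once this bosonic-style strategy is in place, the super-specific parity bookkeeping is absorbed essentially for free, since \emph{(A4)} is formulated in parallel for $\otpr$ and $\Pi\otpr$.
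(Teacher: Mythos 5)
Your proposal is correct and follows essentially the same route as the paper: reduce to the two hypotheses of Lemma \ref{tech}, prove closedness of $\on{Supp}(\exc(v,\mm))$ by splitting it into the supports of the coherent modules $\on{coker}(c)$ and $\on{im}(\pi)=\ker(\exc(T,\mm)\to\obv(\mm))$ via Lemma \ref{properties}, and obtain the dense open $V'\subseteq T'$ by combining (A4) with generic flatness and base change for the cotangent functors. Your explicit appeal to Nakayama to get both directions of the equivalence $\exc(v,\otpr)_t=0\iff\exc(v,k(t))_t=0$ is in fact slightly more careful than the paper's write-up, which only spells out one implication.
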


\begin{proof}

The goal is to prove that the two conditions of Lemma \ref{tech} are satisfied.

Let $T'$ and $V'$ be as in condition (A2). 
We first prove that the subset $V' \subset T'$ is equal to the subset $V'$ defined in Lemma \ref{tech}. 
We begin by proving that $V'$ has the following properties 
\begin{itemize}

\item[(i).] $\mc{O}b_v(\mc{O}_{T'}) \vert_{V'}$ and $\obv( \Pi \mc{O}_{T'}) \vert_{V'}$  are locally free, and 

\item[(ii). ] the natural maps
\begin{equation} \label{dermap}
   \cder(\ot, \otpr) \otimes k(t)  \to \cder(\ot, k(t))_t,  \end{equation}
   and
   \begin{equation} \label{omap}
    \exc(T, \otpr) \otimes k(t)  \to \exc(T, k(t))_t
\end{equation}
are bijective for all $t \in V'$, and analogously for $\cder(\ot, \Pi \otpr)$ and $\exc(T, \Pi \otpr)$. 
\end{itemize} 

Henceforth, all statements we make in terms of $\otpr$ hold for $\Pi \otpr$ using the exact same proof.

Since $\obv(\otpr)$ is a coherent $\otpr$-module by (A1), we can pass to a dense open subset $W \subset V'$, on which $\obv(\otpr) \vert_W$ is locally free. Clearly, condition (A2) holds for $V' \cap W$ and so we can just redefine $V'= V' \cap W$. This proves the first claim

For the second claim, identify $\cder(\ot, -)$ and $\exc(\ot, -)$ with $\mc{E}xt_{\ot}^0(L^{\bullet}, -)_{ev}$ and  $\mc{E}xt_{\ot}^1(L^{\bullet}, -)_{ev}$, respectively.  Under these identifications, the maps \eqref{dermap} and \eqref{omap} are well-known to be bijections. This proves the second claim.

We now consider the diagram 
\begin{equation} \label{digofv}
\begin{tikzcd}
{\cder(\ot, \otpr)} \arrow[d] \arrow[r, "c"] & \mc{D}_v(\otpr) \arrow[r] \arrow[d] & {\exc(v, \otpr)} \arrow[r, "\pi"] \arrow[d] & {\exc(T, \otpr)} \arrow[r] \arrow[d] & \mc{O}b_v(\otpr) \arrow[d] \\
{\cder(\ot, k(t))} \arrow[r, "c_t"]             & \mc{D}_v(k(t)) \arrow[r]               & {\exc(v, k(t))} \arrow[r, "\pi_t"]             & {\exc(T, k(t))} \arrow[r]               & \mc{O}b_v(k(t))              
\end{tikzcd}
\end{equation}
where all terms are assumed to be restricted to $V'$. 
We will now prove that the canonical morphisms 
\begin{align*}
    \on{coker}(c) \otimes k(t) & \to \on{coker}(c_t)_t \\ 
    \on{Im}(\pi) \otimes k(t) & \to \on{Im}(\pi_t)_t
\end{align*}
are bijections for all $t \in V'$. 

All terms in the top row of diagram \eqref{digofv} (other than $\exc(v, \otpr)$) are coherent modules over the Noetherian domain $V'$ and so we can apply generic flatness to all the objects in the top row, as well as to all their images and kernels and get an open subset $W \subset V'$ on which  the functor $- \otimes_{\mc{O}_{V'}} k(t)$  is exact. Redefine $V'= V' \cap W$. 

For the remainder of the proof, we set $A= \cder(\ot, \otpr)$, $B= \mc{D}_v(\otpr)$, $C= \exc(v, \otpr)$, $D=\exc(T, \otpr)$ and $E= \mc{O}b_v(\otpr)$. We also set
$A_k= \derp(\ot, k(t))$, $B_k = \mc{D}_v(k(t))$, etc..

We begin by proving that $\on{coker}(c) \otimes k(t) \to \on{coker}(c_t)_t$ is bijective for all $t \in V'$. Consider the short exact sequence 
\begin{equation} \label{sflt} 0 \to A \overset{c}{\to} B \to B/\on{im}(c) \to 0.  \end{equation}  
Since $A, B$  are flat, we can tensor \eqref{sflt} with $k(t)$, and get a short exact sequence
\begin{equation} \label{tybr}
    0 \to A \otimes k(t) \overset{c \otimes 1}{\longrightarrow} B \otimes k(t) \to \left( B/\on{im}(c) \right) \otimes k(t) \to 0.  
\end{equation}
From which it follows that $ \on{coker}(c) \otimes k(t)= \on{coker}(c \otimes 1)$. We have that (1) $B \otimes k(t) = (B_k)_t$ by condition (A2)a, and (2) that $A \otimes k(t)= (A_k)_t$  by bijectivity of the map \eqref{dermap}. From these identifications, we get the following diagram 
\begin{equation} \label{biglocal}
    \begin{tikzcd}
    0 \arrow[r] & A \otimes k(t) \arrow[d, no head, double] \arrow[r, "c \otimes 1"] & B \otimes k(t) \arrow[d, no head, double] \arrow[r] & \on{coker}(c \otimes 1)  \arrow[d, no head, double] \arrow[r] & 0 \\
    0 \arrow[r] & (A_k)_t \arrow[r] & (B_k)_t \arrow[r] & (B_k)_t/(A_k)_t \arrow[r] & 0. 
    \end{tikzcd}
\end{equation} 
Here $(B_k/A_k)_t = (B_k)_t/(A_k)_t$ by flatness of localization, and so $(B_k)_t/(A_k)_t= \on{coker}(c_t)_t$ since $B_k/A_k=\on{coker}(c_t)$.  This proves that $\on{coker}(c) \otimes k(t) = \on{coker}(c_t)_t$.

We will now prove that the canonical morphism $\on{im}(\pi) \otimes k(t) \to \on{im}(\pi_t)_t$ is a bijection. 

Consider the short exact sequence  
\begin{equation} \label{imageseq}
    0 \to \on{im}(\pi) \to D \to D/\on{im}(\pi) \to 0
\end{equation}
where $E=D/\on{im}(\pi)$ by definition.  The term $\on{im}(\pi)$ is a $\mc{O}_{V'}$-module by Lemma \ref{properties}. All terms in \eqref{imageseq} are flat (by generic flatness) and coherent (by condition (A1)). Since all terms in  \eqref{imageseq} are flat, we can tensor with $k(t)$ and get a short exact sequence 
\begin{equation}
    0 \to \on{im}(\pi) \otimes k(t) \to D \otimes k(t) \to E \otimes k(t) \to 0 
\end{equation}
from which it follows that $E \otimes k(t) = \left(D \otimes k(t) \right)/ \left( \on{im}(\pi) \otimes k(t) \right)$.  We have that (1) $D \otimes k(t) = (D_k)_t$ by bijectivity of \eqref{omap}, and (2) that $E \otimes k(t) \subset (E_k)_t$ by (A2)b. 

Using the above identifications, we get the following diagram
\begin{equation} \label{ishlocal}
    \begin{tikzcd}
    0 \arrow[r] & \on{im}(\pi) \ \otimes k(t) \arrow[d] \arrow[r] & D \otimes k(t) \arrow[d, no head, double] \arrow[r] & E \otimes k(t)  \arrow[d, hook] \arrow[r] & 0 \\
    0 \arrow[r] & (\on{im}(\pi_t))_t \arrow[r] & (D_k)_t \arrow[r] & (E_k)_t \arrow[r] & 0. 
    \end{tikzcd}
\end{equation} 
The map $\imp \otimes k(t) \to \on{im}(\pi_t)_t$ is bijective by the 5 lemma. 


  
We will now prove that $V'$ is the from Lemma \ref{tech}. Recall that the subset $V'$ in Lemma \ref{tech} is defined by the following property: For all $t \in V'$, $C=0$ implies that $(C_k)_t=0$. From  \eqref{digofv} we get the following diagram of pointed sets: 
\begin{equation}
\begin{tikzcd}
    0 \arrow[r] & \on{coker}(c) \arrow[r, hook] \arrow[d] & C \arrow[r] \arrow[d] & \on{im}(\pi) \arrow[r] \arrow[d] & 0 \\
    0 \arrow[r] & (\on{coker}(c_t)_t \arrow[r, hook] & (C_k)_t \arrow[r] & \on{im}(\pi_t)_t \arrow[r] & 0
    \end{tikzcd}
\end{equation}

If $C =0$, then $\on{coker}(c)=0$ and $\on{im}(\pi)= 0$. Tensoring $\on{coker}(c)$ and $\on{im}(\pi)$ with $k(t)$, we find that $0= \on{coker}(c) \otimes k(t) = \on{coker}(c_t)_t$ and that $0 =\imp \otimes k(t)=\on{im}(\pi_t)_t$. Thus, $(C_k)_t=0$. This proves that $V'$ is a subset of $T'$ as in Lemma \ref{tech}.


We will now prove that condition two of Lemma \ref{tech} is satisfied: For every coherent $\otr$-module $\mm$ the support of $\ex(v, \mm)$ is a closed subset of $T$. For each such $\mm$, consider the sequence of pointed sets:
\begin{equation}
    0 \to \cder(\ot, \mm) \overset{c}{\to} \mc{D}_v(\mm) \to \exc(v, \mm) \overset{\pi}{\to} \exc(T, \mm) \to \mc{O}b_v(\mm).
\end{equation}
We then have a sequence of pointed sets, 
\begin{equation}
    0 \to \on{coker}(c) \to \exc(v, \mm) \to \imp \to 0
\end{equation}
and so we have that 
\[ \on{Supp}(\exc(v, \mm))= \on{Supp}(\on{coker}(c)) \cup \on{Supp}(\imp). \] 
Here $\on{coker}(c)$ is a coherent $\ot$-module by (S1)b and $\imp$ is coherent because it is a submodule of $\exc(T, \mm)$. Therefore,  $\on{Supp}(\exc(v, \mm))$ since it is a union of the supports of coherent modules.  \end{proof}

\section{Formal smoothness  from formal versality} 

Let $C^*$ denote the category of local, Noetherian, Henselian superalgebras with residue field at the unique closed point all equal to $k$. Throughout this section, let $R$ be a finite type superalgebra in $C^*$.


\begin{definition}[Versal] An object $v$ in $\mxx$ over $\spe R$ is called versal if for every surjection $A' \to A$ in $C^*$ there exists an arrow completing the diagram 
\begin{equation} \label{versalitycond}
\begin{tikzcd} 
 \spe A \arrow[d] \arrow[r]           & \spe R \arrow[d, "v"] \\
 \spe A' \arrow[r] \arrow[ru, dashed] & \mxx                 
\end{tikzcd}
\end{equation}

\end{definition}
The difference between
the lifting condition in \eqref{versalitycond} and formal versality is that versality is tested against
all Henselian superalgebras, rather than just Artin superalgebras.  


\begin{theorem}[Compare {\cite[Theorem 3.3]{artin1974versal}}] \label{versality} 
Let $v$ be a formally versal object in $\mxx$ over $\spe R$. Suppose the following conditions hold over $C^*$. 
\begin{itemize}
    \item[\emph{(i)}] \emph{(S1)} and \emph{(S2)} hold. 
    
    \item[\emph{(ii)}] ($D$ is compatible with completion): Let $A' \to A \to A_0, \  \on{ker}(A' \to A)=M$ be as in \eqref{infextr} with $A_0$ of finite type. Let $a_0 \in \mxx(\spe A_0)$, then \begin{equation} \label{compcomp} D_{a_0}(M) \otimes \wh{A}_0 \lgr \varprojlim D_{a_0}(M/\mfr^n M) \end{equation} 
    
    \item[\emph{(iii)}] Let $A$ be a finite type (ordinary) algebra with an ideal $I$ and let $\ov{A}$ be its $I$-adic completion. Let $a,b \in \mxx(\spe \ov{A})$. If there exists a compatible sequence of isomorphisms $a_n \lgr b_n$ between the truncations in $\mxx(\spe A/I^{n+1})$, then there is an isomorphism $a \lgr b$ compatible with the given isomorphism $a_0 \lgr b_0$. 
\end{itemize}
If $(i), (ii)$ hold, then the lifting condition \eqref{versalitycond} holds for all infinitesimal extensions $A' \to A$. If $(i)-(iii)$ hold, then $v$ is versal. 
\end{theorem}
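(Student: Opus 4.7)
The plan is to establish the two assertions in sequence: first showing that conditions (i) and (ii) upgrade the formal (Artin-level) versality of $v$ to the full infinitesimal lifting property against nilpotent-kernel surjections in $C^*$, and then using (iii) together with super Artin approximation (Theorem \ref{superArt}) to bootstrap this to versality against arbitrary surjections in $C^*$.

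For the first part, given a nilpotent-kernel surjection $A' \to A$ in $C^*$, I will reduce to the case of a square-zero extension with kernel $M$ a finite $A_0$-module, via the super analog of Lemma \ref{factorization}. Setting $a_0 := v|_{\spe A_0}$, the $\mfr$-adic truncations $A'/\mfr^n \to A/\mfr^n$ are Artin square-zero extensions, so formal versality of $v$ produces a lift $\psi_n : \spe(A'/\mfr^n) \to \mxx$ at each level, extending the given map $\spe(A/\mfr^n) \to \spe R$. The discrepancy between $\psi_{n+1}|_{\spe(A'/\mfr^n)}$ and $\psi_n$ defines an element of the torsor $D_{a_0}(M/\mfr^n M)$, and the collection $\{\psi_n\}$ fails to glue precisely by an element of $\varprojlim D_{a_0}(M/\mfr^n M)$. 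Condition (ii), which identifies this inverse limit with $D_{a_0}(M) \otimes \wh{A}_0$, guarantees that this obstruction can be corrected by a single torsor adjustment applied at a fixed level; iteratively rechoosing the $\psi_n$ produces a compatible system and hence a morphism $\spe \wh{A}' \to \mxx$. Because $M$ is nilpotent, the relevant quotient data is controlled by finitely many $\mfr$-adic truncations, and the limit morphism descends to an honest lift over $\spe A'$.

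For the second part, consider an arbitrary surjection $A' \to A$ in $C^*$ with kernel $I$ (not assumed nilpotent). Apply the first part to each square-zero quotient $A'/I^{n+1} \to A'/I^n$ in succession; the resulting lifts form a compatible system whose inverse limit yields a morphism $\spe \wh{A}' \to \mxx$ from the $I$-adic completion, with condition (iii) guaranteeing that the compatible family of isomorphisms between truncations descends uniquely to $\wh{A}'$. To pass from $\wh{A}'$ to the Henselian $A'$, I will invoke super Artin approximation (Theorem \ref{superArt}) applied to the limit-preserving functor of lifts: the formal data $\wh{\xi} \in \mxx(\spe \wh{A}')$ is approximated up to any finite order by an object over an \'etale neighborhood of the closed point of $\spe A'$, and the Henselian hypothesis on $A'$ then furnishes a canonical section realizing the lift over $A'$ itself. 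This is precisely where the proof sidesteps the difficult super convergence argument of \cite[Theorem~3.3]{artin1974versal}: instead of reworking Artin's gr\"obner-style analysis with odd coordinates, we outsource the convergence to the (already available) bosonic result packaged inside super Artin approximation.

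The principal obstacle will be the compatibilization step in the first part, where condition (ii) must be leveraged to convert the a priori incompatible torsor discrepancies $\delta_n \in D_{a_0}(M/\mfr^n M)$ into a single corrective adjustment. Care is required to ensure that the correction is performed consistently across both parities of the $\z_2$-grading and respects all transition maps in the inverse system; the completion compatibility of $D$ is precisely the statement that makes this possible in a uniform way. Once the compatible system of lifts is in hand, the descent in the second part, enabled by (iii) and super Artin approximation, is comparatively routine.
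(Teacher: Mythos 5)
Your first part conflates two different torsors and, as a consequence, omits the step where condition (ii) actually does its work. A lift of the ring map $R \to A/\mfr^n$ to $R \to A'/\mfr^n$ is a torsor under $\on{Der}(R, M(n))$, not under $D_{a_0}(M/\mfr^n M)$; the compatible system of ring maps $R \to \wh{A}'$ comes from the Mittag--Leffler condition for the inverse system $\{\on{Der}(R,M(n))\}$ (finite-length $A_0$-modules with $A_0$-linear transition maps), and one then passes from $\wh{A}'$ back to $A'$ by Artin approximation --- nilpotence of $M$ does not make the $\mfr$-adic truncations stabilize, so your claim that ``the limit morphism descends to an honest lift over $\spe A'$'' is not automatic. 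More seriously, once a ring map $\phi: R \to A'$ is in hand, $\phi^*(v)$ need not be isomorphic to the given object $a'$ over $\spe A'$: one must show $\phi$ can be corrected by a derivation, i.e.\ that $c: \on{Der}(R,M) \to D_{a_0}(M)$ is surjective. This is proved by induction on the length of $M$ (reducing to the Artin case, where formal versality applies) and then, for general $M$, by passing to the limit over $M_n = M/\mfr^{n+1}M$; it is precisely in identifying $\varprojlim D_{a_0}(M_n)$ with $D_{a_0}(M)\otimes \wh{R}$ and using faithful flatness of completion that condition (ii) enters. Your proposal skips this object-level correction entirely.

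For the second part your route diverges from the paper's and reimports exactly the convergence problem the paper is structured to avoid. Building a formal lift over the $I$-adic completion $\wh{A}'$ and then ``invoking super Artin approximation'' does not close the argument: Theorem \ref{superArt} only produces an object agreeing with the formal one to a prescribed \emph{finite} order over an \'etale neighborhood, and upgrading finite-order agreement to an exact solution of the lifting problem over the Henselian ring $A'$ (including the use of (iii) to compare the approximate pullback with $a'$) is the genuinely delicate part of \cite[Theorem 3.3]{artin1974versal}, which would have to be reworked with odd variables. The paper instead applies the bosonic Theorem 3.3 as a black box to obtain a lift $g_{\bos}$ on bosonic reductions, observes that $A' \to A'_{\bos} \times_{A_{\bos}} A$ is a surjection with nilpotent kernel, and applies the already-established infinitesimal case to the map $g_{\bos} \sqcup f$ out of $\spe (A'_{\bos} \times_{A_{\bos}} A)$; this produces a lift restricting correctly to $\spe A$ with no convergence argument at all. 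You should either adopt this fiber-product reduction or supply the full approximation-and-comparison argument you are currently only gesturing at.
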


\begin{proof} 
We follow the first part of Artin's proof of 
\cite[Theorem 3.3]{artin1974versal}), 
concerning infinitesimal extensions, 
but afterwards avoid some difficult
limit arguments by then reducing 
to the bosonic case.


Consider the lifting problem for an infinitesimal extension $A' \to A \to A_0, \on{ker}(A' \to A)=M$. Since $\mxx$ is limit preserving, we may assume that $A'$ is finite type. We may also assume that $M$ is a finitely generated $A_0$-module with $M^2=0$. 
By adding an appropriate number of generators to $R$, we may further assume that the map $R \to A$ is surjective so that $A$ is a finite type $R$-module.  

The goal is to prove that for every map $f: R \to A$ there exists an arrow completing the diagram
\begin{equation} \label{endgame}
\begin{tikzcd}
\spe A \arrow[r] \arrow[d, hook]            & \spe R \arrow[d, "v"] \\
\spe A' \arrow[r] \arrow[ru, dashed] & \mxx                 
\end{tikzcd}
\end{equation}


\begin{lemma} \label{versalitylemma} There exist an arrow completing the diagram
\begin{equation} \label{versalitythm}
\begin{tikzcd}
R \arrow[rr, dashed] \arrow[rrd, swap, "f"] &  & A' \arrow[d] \\
                                 &  & A           
\end{tikzcd}
\end{equation}  

\end{lemma}

Set $A_n'= A'/\mfr_{A'}^n$, $A_n = A/\mfr_A^n, \ \on{ker}(A_n' \to A_n)=M(n)$. Since $v$ is formally versal, there exists for each $n$ an arrow $R \to A_n'$ completing the diagram 
\begin{equation} \label{dig} 
\begin{tikzcd}
R \arrow[rr, dashed] \arrow[rrd, swap, "f_n"] & &  A_n' \arrow[d] \\ 
& & A_n. 
\end{tikzcd}
\end{equation} 
The set of arrows completing \eqref{dig} can be (non-canonically) identified with the set \linebreak $\on{Der}(R, M(n))$. To prove there exists an arrow completing diagram \eqref{versalitythm} it suffices to prove that the inverse system $\{ \on{Der}(R, M(n), \phi_{m,n}: \on{Der}(R, M(m)) \to \on{Der}(R,M(n)) \}$ for $m \ge n$ satisfies the Mittag-Leffler (ML) condition. \footnote{ For each $n$, there exists some $n_0 \ge n$, such that \[ \phi_{n',n}\left(\on{Der}(R, M(n')) \right) = \phi_{n'', n} \left( \on{Der}(R, M(n)) \right) \subset \on{Der}(R, M(n)) \]  for all $n', n'' \ge n_0$. Since $\on{Der}(R, M(n))$ can be identified with the set of arrows completing \eqref{dig}, the \emph{stability} of the system $\phi_{m,n}$ implies that the set of arrows completing \eqref{dig} is also stable. In particular, the set of arrows does not go to the empty set in the limit since it is non-empty for each $n$.  }
To prove that the inverse system $\{ \on{Der}(R, M(n),  \phi_{m,n} \}$ satisfies the ML condition, we will prove that $\on{Der}(R, M(n)$ is a finite length $A_0$-module and that the maps $\phi_{m,n}$ are $A_0$-linear since such a system automatically satisfies the ML condition (see e.g, \cite[Example 9.1.2]{hartshorne2013algebraic}).

We begin by proving that each $M(n)$ is finite length over $A_0$. We find that $M(n)=M/\mfr_{A'}^n \cap M$ by applying the $9$-lemma to the diagram 
\begin{center}
\begin{tikzcd}
            & 0 \arrow[d]                          & 0 \arrow[d]                   & 0 \arrow[d]                &   \\
0 \arrow[r] & \mfr_{A'}^n \cap M \arrow[r] \arrow[d] & \mfr_{A'}^n \arrow[r] \arrow[d] & \mfr_A^n \arrow[r] \arrow[d] & 0 \\
0 \arrow[r] & M \arrow[r] \arrow[d]                & A' \arrow[r] \arrow[d]        & A \arrow[r] \arrow[d]      & 0 \\
0 \arrow[r] & M(n) \arrow[r] \arrow[d]             & A_n' \arrow[r] \arrow[d]      & A_n \arrow[r] \arrow[d]    & 0 \\
            & 0                                    & 0                             & 0                          &  
\end{tikzcd}
\end{center}
In particular, $M(n)$ is naturally a finite $A_0$-module since $M$ is finite over $A_0$.  
To prove that $M(n)$ has finite length over $A_0$, we will construct for it a finite length composition series. Consider
\begin{equation} \label{composition} 0 = \mfr_{A'}^n M(n) \subset \mfr_{A'}^{n-1} M(n) \subset \mfr_{A'}^{m-2} M(n) \subset \cdots \subset \mfr_{A'} M(n) \subset M(n).    \end{equation} 
The quotients $\mfr_{A'}^i M(n) /\mfr_{A'}^{i+1} M(n)$ are finite dimensional super vector spaces over $k=A'/\mfr_{A'}$ and so we can take refinement of \eqref{composition} and get a finite length composition series for $M(n)$. This proves that $M(n)$ has finite length over $A_0$. 

We will now prove that $\on{Der}(R, M(n))$ is a finite length $A_0$-module. The $A_0$-module structure on $\on{Der}(R,M(n))$ is as follows: 
For each $a \in A_0$, and $d \in \on{Der}(R, M(n))$, define $(a \cdot d)(r)=a d(r)$. It is easy to check that $(a \cdot d)(r)$ is a derivation.
To see that $\on{Der}(R, M(n))$ is of finite length over $A_0$, just replace $M(n)$ in \eqref{composition} with $\on{Der}(R, M(n))$.

To prove that the maps $\phi_{m,n} : \on{Der}(R, M(m)) \to \on{Der}(R, M(n))$ are $A_0$-linear, note that the composition $f_0: R \overset{f}{\to} A \to A_0$ gives $M(n)$ an  $R$-module structure, $r \cdot m = f_0(r) \cdot m$.  In particular, $\on{Der}(R, M(n)) \cong \on{Hom}_R(\Omega_R, M(n))$ and
\[ \phi_{m,n}: \on{Hom}_R(\Omega_R, M(m)) \to \on{Hom}_R(\Omega_R, M(n)): (R \to M(m)) \mapsto (R \to M(m) \overset{\ovmn}{\longrightarrow} M(n) \]
where $\ovmn$ is the quotient map 
\[\ovmn: M(m) = M/ \mfr_{A'}^m \cap M \to M(n) = M /\mfr_{A'}^n \cap M = M(m)/\mfr_{A'}^n \cap M(m).\]
The map $\phi_{m,n}$ 
is $A_0$-linear, because $\ovmn$ is $A_0$-linear. 
This proves that $\{ \on{Der}(R, M(n), \phi_{m,n} \}$ is an inverse system of finite length $A_0$-modules and thus satisfies the Mittag-Leffler condition. 
In particular, we get a map $R \to \wh{A}'$ compatible with $f: R \to A$ which can be approximated by a map $\phi: R \to A'$ by Theorem \ref{artapp}.



We will now describe how the lifting problem for $R$ can be reduced to solving the same lifting problem when $A'=A[M]$. Let $b' = \phi^*(v) \in \mxx_a(\spe A')$ and note that $b'$ need not be isomorphic to $a' \in \mxx_a( \spe A')$. The lifting problem for $R$ is thus equivalent to proving there exists $d \in \derm$ such that \begin{equation} \label{goalss} (\phi + d)^*(v) \cong a' \end{equation} in $\mxx_a(\spe A')$. By (S1), the group $D_{a_0}(M)$ acts transitively on the set $\omxx_a(\spe A')$ (see Lemma \ref{modulestructure}) and so there exists $e \in D_{a_0}(M)$ such that $a' \cong b' + e$. 
In particular, to prove \eqref{goalss} it suffices to show there exists $d \in \derm$ such that $d^*(v) \cong e$. To prove such a $d$ exists it suffices to prove that the map $c: \derm \to D_{a_0}(M)$ in \eqref{morphismseq} is surjective.  Since $D_{a_0}(M)=D_a(M)$ by (S1)b and since $\derm = \on{Hom}_{/f}(R, M)$, proving that the map $c$ is surjective is equivalent to solving the lifting problem for $R$ when $A'=A[M]$.


If $M$ has length $(1|0)$ or $(0|1)$, then the map $A_0 \times_k k[M] \to A_0[M]: (a, (s(a), m)) \mapsto (a, m)$, where $s: A_0 \to A_0/\mfr_{A_0} =k$,  is an isomorphism of superalgebras. By (S1)b, this implies that $\omxx_a(\spe A[M])=\omxx_{a_0}(\spe A_0[M])=\omxx(\spe k[M])$. 

Since $k[M]$ is Artin, the lifting condition for $R$ holds by formal versality of $v$. By induction, the lifting property will hold whenever $M$ has finite length. Let $M_n := M/\mfr_{A}^{n+1} M$, then we have just proved that $c_n: \on{Der}(R, M_n) \to D_{a_0}(M_n)$ is surjective for each $n$. We are left to prove that it remains surjective in the limit. 
Both $M_n$ and $R$ are finite length $R$-modules and so $\on{Der}(R, M_n)$ and $D_{a_0}(R)$ are finite-length $R$-modules. In particular the ML condition is satisfied and, 
\[ \wh{c}: \varprojlim_n \on{Der}(R, M_n) \to \varprojlim_n D_{a_0}(M_n) \]  
is surjective. 
Thus, 
\[ 
      \varprojlim \on{Der}(R, M_n) = \varprojlim \on{Hom}_R(\Omega_R, M_n)= \on{Hom}_R(\Omega, M) \otimes \wh{R}  
     \] 
and
    \[ \varprojlim D_{a_0}(M_n)  = D_{a_0}(M) \otimes_{A_0} \wh{\ared} = D_{a_0}(M) \otimes_R \wh{R} 
\]
 by assumptions (ii) of the theorem. Thus $\on{Der}(R, M(n)) \to D_{a_0}(M_n) $ is surjective since its tensor product with $\wh{R}$ is surjective. 
 


This completes the proof that 
the desired lifting holds for
infinitesimal extensions.  We now
use this fact, together with 
Artin approximation for bosonic schemes, to deduce our result.

Let $A' \to A$ be a surjection in $C^*$ and let $f: \spe A \to \spe R$.  By \cite[Theorem 3.3]{artin1974versal},
we can find a lift on the 
bosonic parts.


\begin{center}
\begin{tikzcd}
\spe A_{\bos} \arrow[d] \arrow[r, 
"f_{\bos}"]   & \spe R \arrow[d] \\
\spe A'_{\bos} \arrow[r, "\xi_{A'_{\bos}}"] \arrow[ru, dashed, "g_{\bos}"] & \mxx    \end{tikzcd} 
\end{center}

Since 
$A \to A_{\bos}$ is an infinitesimal extension by  $\jj_{A'}$, 
our arguments above imply 
the existence of $g$ completing the following diagram:
\begin{center}
\begin{tikzcd}
\spe A_{\bos}' \arrow[d] 
\arrow[r, "g_{\bos}"]   
& \spe R \arrow[d] \\
\spe A' \arrow[r, "\xi_{A'}"] \arrow[ru, dashed, "g"] & \mxx    \end{tikzcd} 
\end{center}


However, such $g$ has no 
reason to satisfy 
$g|_{\spe A} = f$, i.e. 
to 
make the upper triangle
of the following diagram
commute 
(although the lower triangle would): 
\begin{equation} \label{entire}
    \begin{tikzcd}
    \spe A \arrow[r, "f"] \arrow[d] & \spe R \arrow[d] \\
    \spe A' \arrow[r, "\xi_A"] \arrow[ru, dashed, "g"] & \mxx.
    \end{tikzcd}
\end{equation}

Instead, note we have
a map
$$g_{\bos} \sqcup f: 
\spe A'_{\bos} \sqcup_{\spe A_{\bos}} \spe A \to \spe R
$$ 
The source is equivalently
$\spe (A'_{\bos} \times_{A_{\bos}} A)$.  Observe that 
$A' \to A'_{\bos} \times_{A_{\bos}} A$
is surjective, since 
$A'_{\bos} \times_{A_{\bos}} A$ 
is generated over $A'_{\bos}$
by the odd part of $A$. 
The kernel is evidently nilpotent.
Thus by the above discussion
we may lift: 

\begin{equation} \label{entire}
    \begin{tikzcd}
    \spe (A'_{\bos} \times_{A_{\bos}} A) 
    \arrow[r, "g_{\bos} \sqcup f"] 
    \arrow[d] & \spe R \arrow[d] \\
    \spe A' \arrow[r, "\xi_A"] \arrow[ru, dashed, "g"] & \mxx.
    \end{tikzcd}
\end{equation}

This $g$ will satisfy 
$g|_{\spe A} = f$, as desired.



\end{proof}

\begin{remark} \label{liftingremark} \normalfont 


Let $S$ be a finite type superscheme and let $s \in S$ be a closed point. 
We recall that the henselization of $\oss$ at $s$ is defined to be $\henss= \varinjlim B_i$ where the $B_i$ are a directed system of \'etale $\mc{O}_{S,s}$-superalgebras. The fact that $\mxx$ is limit preserving, means that
\[ \varinjlim_i \mxx(\spe B_i)) = \mxx(\spe \henss). \] 
Let $v \in \mxx(\mc{O}_{T,t}^h)$ and $a \in \mxx(\oss^h)$, let $f: \mc{O}_{T,t}^h \to \oss^h $ be a morphism in $C^*$, and let $a \to v$ be an arrow in $\mxx$ over $f$.  
 Since $\mxx$ is limit preserving, there exists $i$ large enough such that there exists a factorization $\mc{O}_{T,t}^h \to B_i \to \oss^h$ and an object $a_i \in \mxx(B_i)$ with an arrow $a \to a_i \to v$  over $\mc{O}_{T,t}^h \to B_i \to \oss^h $. 


\end{remark}

Following Artin \cite{artin1974versal}, 
we deduce formal smoothness
from formal versality 
and the following additional conditions. 
\newline


\textbf{Condition} (A5). The sheaf $\mc{D}$ is compatible with completion: If $T_0$ is a reduced finite type scheme, $a_0 \in \mxx(T_0)$, $t \in T_0$ and $\mm$ is a coherent $\oto$-module, then \begin{equation} \label{compcomp} \mc{D}_{a_0}(\mm) \otimes_{\mc{O}_{T_0}} \wh{\mc{O}}_{T_0, t} \lgr \varprojlim \mc{D}_{a_0}(\mm/\mfr_t^n \mm)_t \end{equation} 
\newline 
 
\textbf{Condition} (A6). The sheaf $\mc{D}$ is compatible with \'etale localization: If $e: S_0 \to T_0$ is \'etale, where $e^*(a_0)=b_0$, and $\mm$ a coherent $\oto$-module, then
\begin{equation} \label{compcompl}
    \mc{D}_{b_0}(\mm \otimes_{\oto} \mc{O}_{S_0}) \cong \mc{D}_{a_0}(\mm) \otimes_{\oto} \mc{O}_{S_0}
\end{equation}
\newline

\begin{theorem} \label{formsmooth} Suppose that \emph{(S1)}, \emph{(S2)}, \emph{(A5)} and \emph{(A6)} hold for $\mxx$.  An object $v \in \mxx(T)$ is formally versal at every closed point $t \in T$  if and only if it is formally smooth over $\mxx$. 

\end{theorem}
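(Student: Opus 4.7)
The ``only if'' direction is tautological: any surjection $B' \twoheadrightarrow B$ of local Artin superalgebras with residue field $k(t)$ is in particular an infinitesimal extension of Noetherian superalgebras, so formal smoothness of $v$ automatically yields formal versality at every closed point $t \in T$.

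For the ``if'' direction, the plan is to adapt Artin's argument \cite[Prop.~4.3]{artin1974versal}: handle the local situation at each closed point via Theorem \ref{versality}, and glue using (A5) and (A6). Let $B' \twoheadrightarrow B$ be an infinitesimal extension of Noetherian superalgebras with kernel $I$, let $f : \spe B \to \spe R$, and let $\tilde v \in \mxx(\spe B')$ extend $f^* v$. First I would filter $I$ by the $\mfr_{B'}$-adic filtration and use the nilpotence of $I$ -- mimicking the proof of Lemma \ref{factorization} -- to reduce to the case $I^2 = 0$ with $I$ a finite $B$-module. In this setting Lemma \ref{torsorder} identifies the set of ring-level lifts $R \to B'$ of $R \to B$ as a $\derp(R, I)$-torsor, while Lemma \ref{modulestructure} identifies the set of extensions of $f^* v$ in $\mxx(\spe B')$ as a $\mc{D}_{f^* v}(I)$-torsor.

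Next I would fix a closed point $b' \in \spe B'$ with image $b \in \spe B$ and $t = f(b) \in T$. Passing to the localization at $b'$ and then, by limit preservation (cf.~Remark \ref{liftingremark}), to the henselization $B_{b'}^h$, the localized lifting problem fits exactly the setup of Theorem \ref{versality}: formal versality of $v$ at $t$, together with (S1), (S2), and compatibility of $\mc{D}$ with completion (A5), are precisely the hypotheses needed for the infinitesimal part of that theorem, which produces a lift over $B_{b'}^h$. Limit preservation then ensures this lift is already defined over some \'etale neighborhood $U_{b'} \to \spe B'$ of $b'$.

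The main obstacle, as I see it, is the final gluing step: patching the \'etale-local lifts $U_{b'} \to \spe R$ into a global $\tilde f : \spe B' \to \spe R$ for which $\tilde f^* v \cong \tilde v$. Here (A6) does the real work -- compatibility of $\mc{D}$ with \'etale localization ensures that both the $\derp(R, I)$- and $\mc{D}_{f^* v}(I)$-torsor structures on the sets of lifts are natural under \'etale base change, so that the cocycle obstruction to gluing is a class in the first cohomology of the cover $\{U_{b'}\}$ with coefficients in a quasi-coherent derivation sheaf, and vanishes after refining to a Zariski affine cover. The delicate point is verifying that the two torsor structures can be tracked simultaneously, so that the glued lift induces exactly $\tilde v$ at the level of $\mxx$; this requires combining (A5), used to pass between the localization and completion at each closed point, with (A6), used for \'etale descent.
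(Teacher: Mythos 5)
Your overall skeleton matches the paper's: reduce to a square-zero extension with finite module $M$, obtain lifts \'etale-locally around each closed point by applying Theorem \ref{versality} to henselizations together with limit preservation, and then globalize using (A5) and (A6). The ``only if'' direction is indeed immediate. However, the step you flag as ``the delicate point'' --- making the glued lift actually induce the given object $\tilde v$ --- is the entire content of the theorem, and your proposal does not resolve it; it is also not correctly diagnosed as a cocycle/gluing problem.

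Here is what is actually needed. The existence of \emph{some} global ring-level lift $f' : R \to B'$ is the easy half: the obstruction is a class in $\on{Ext}^1(\phi_* L_R^{\bullet}, M)_{ev}$, which is local for the \'etale topology and vanishes locally, hence vanishes (your $H^1$-of-a-quasi-coherent-sheaf-on-an-affine argument is an equivalent formulation). But once one such $f'$ exists, there is no further gluing to do: by Lemma \ref{modulestructure} (via (S1)), $D_{a_0}(M)$ acts transitively on the set of extensions of $f^*v$ over $\spe B'$, so $\tilde v \cong (f')^* v + e$ for some $e \in D_{a_0}(M)$, and the remaining question is whether there exists $d \in \derp(R,M)$ with $c(d) = e$, where $c$ is the map in \eqref{morphismseq}. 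That is, the problem reduces to \emph{surjectivity of the single map of coherent modules} $c : \derp(R,M) \to D_{a_0}(M)$ --- not to tracking two torsor structures through a \v{C}ech argument. This surjectivity is then checked after \'etale-surjective (hence faithfully flat) base change: the proof of Theorem \ref{versality}, run over the henselizations, gives surjectivity of $\derp(R, M \otimes_{A_0} C_i) \to D_{b_{0,i}}(M \otimes_{A_0} C_i)$ for each member $C_i$ of an \'etale cover, and (A6) identifies these with $\derp(R,M) \otimes_{A_0} C_i \to D_{a_0}(M) \otimes_{A_0} C_i$; surjectivity of $c$ follows since it holds after localization at every maximal ideal. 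Without this reduction and the faithfully-flat-descent-of-surjectivity step, your argument does not close.
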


\begin{proof} 
Let $T=\spe R$ and assume that $v$ is formally versal at every closed point $t \in T$ (the other implication is trivial). Consider the lifting problem in \eqref{into: defn formal smoothness} for the infinitesimal extension $A' \to A, \ \on{ker}(A' \to A)=M$ and a given map $f: R \to A$. We may assume that $M^2=0$ and that $M$ is a finitely generated $A_0$-module.




Since $D$ is compatible with completion, we can apply Theorem \ref{versality} to conclude that $T$ satisfies the lifting condition in \eqref{nei} \'etale locally around each closed point $t \in T$;  apply Theorem \ref{versality} to the henselizations of $A,A'$ and $R$ at each closed point in $T$ and use the fact that $\mxx$ is limit preserving, see Remark \ref{liftingremark}. 

We therefore have \'etale covers,
 $e_{0,i}: A_0 \to \Pi C_i$,
 $e_i': A' \to \Pi B_i'$ and $e_i: A \to \Pi B_i$ 
and a family of dotted arrows in $\mxx$  completing the diagrams
\begin{equation}
    \begin{tikzcd}
    v \arrow[rr, dashed] \arrow[rd] & & b_i' \arrow[dl]\\
    & b_i & \\
    \end{tikzcd}
\end{equation}
where $b_i=e_i^*(a)$, $b_i'= (e_i')^*(a')$. We also have an \'etale cover $e_{0,i}: A_0 \to \Pi C_i$ with $b_{0,i}=(e_{0,i}^*)(a_0)$.

The obstruction to the existence of an arrow completing the diagram 
\begin{equation}\label{liftr}
\begin{tikzcd} 
R \arrow[r, dashed] \arrow[rd, "f"] & A' \arrow[d] \\
                                        & A           
\end{tikzcd}
\end{equation}
is a class $\frak{o}$ in $\on{Ext}^1(\phi_*L_R^{\bullet}, M)_{ev}$, which is local for the \'etale topology. Thus $\frak{o}=0$, and so there is some arrow $f': R \to A'$ determined up to a derivation $d \in \derp(R, M)$. As in the proof of Theorem \ref{versality}, we have reduced the problem to proving that the map $c: \derp(R, M) \to D_{a_0}(M)$ is surjective. 

We can repeat the proof of Theorem \ref{versality} and find that the map $\derp(R, M \otimes_{A_0} C_i) \to D_{b_{0,i}}(M \otimes_{A_0} C_i)$ is surjective for each $i$. 
Both terms are compatible with \'etale localization and so we get a surjection $\derp(R, M) \otimes_{A_0} C_i \to D_{a_0}(M) \otimes_{A_0} C_i$ by (A6).  Thus, the map $\derp(R , M) \to D_{a_0}(M)$ is surjective because it is surjective after \'etale base change. \footnote{ More precisely, for each $t \in \spe A_0$, 
we have $\on{Der}(R, M)_{t} \otimes_{(A_0)_t} C_i \to D(M)_t \otimes_{(A_0)_t} C_i $ is surjective, where $C_i$ is an \'etale $(A_0)_t$-superalgebra, and $\derp(R,M)_t, D(M)_t$ is the localization at the maximal ideal $\mfr_t$ of $t$. Thus $\on{Der}(R,M)_t \to D(M)_t$ is surjective since it is surjective after \'etale base change. Therefore, $\on{Der}(R, M) \to D(M)$ is surjective since it is surjective after localization at  every maximal ideal.   }

\end{proof}

\begin{appendices}

\section{Proof of Schlessinger's theorem} 
\label{schlessinger proof appendix}



Here we follow \cite{schlessinger1968functors} very closely (except in the proof of Lemma \ref{surjcot}). 

\begin{lemma}[Compare {\cite[Lemma 1.1]{schlessinger1968functors}}] \label{surjcot}
A morphism $B \to A$ of local, complete, Noetherian superalgebras is surjective  if and only if the induced map on Zariski cotangent spaces $\tanb \to \tana$ is surjective. 

\end{lemma}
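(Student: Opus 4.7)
The forward direction is essentially immediate: if $f: B \to A$ is surjective and local, then $f$ sends $\frak{m}_B$ surjectively onto $\frak{m}_A$ (since any element of $\frak{m}_A$ has a preimage in $B$, which must lie in $\frak{m}_B$ by locality), and so the induced map $\frak{m}_B/\frak{m}_B^2 \to \frak{m}_A/\frak{m}_A^2$ of super $k$-vector spaces is surjective.

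For the converse, the plan is to show by induction on $n \ge 1$ that the induced map $f_n : B/\frak{m}_B^n \to A/\frak{m}_A^n$ is surjective for every $n$, and then to conclude by taking inverse limits using that $B$ and $A$ are $\frak{m}$-adically complete. The base case $n=1$ is trivial because both quotients equal $k$. For the inductive step, assuming $f_n$ is surjective, the snake-type comparison of the exact sequences
\[ 0 \to \frak{m}_B^n/\frak{m}_B^{n+1} \to B/\frak{m}_B^{n+1} \to B/\frak{m}_B^n \to 0 \]
\[ 0 \to \frak{m}_A^n/\frak{m}_A^{n+1} \to A/\frak{m}_A^{n+1} \to A/\frak{m}_A^n \to 0 \]
reduces the surjectivity of $f_{n+1}$ to the surjectivity of the induced map on $n$-th graded pieces $\frak{m}_B^n/\frak{m}_B^{n+1} \to \frak{m}_A^n/\frak{m}_A^{n+1}$.

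The key observation is that $\frak{m}_A^n/\frak{m}_A^{n+1}$ is spanned as a (super) $k$-vector space by $n$-fold products of elements of $\frak{m}_A/\frak{m}_A^2$ — this uses that $\frak{m}_A$ is finitely generated (since $A$ is Noetherian), so each $\frak{m}_A^n$ is generated by $n$-fold products of a finite generating set of $\frak{m}_A$, and mod $\frak{m}_A^{n+1}$ these products depend only on the classes in $\frak{m}_A/\frak{m}_A^2$. By the hypothesis that $\tanb \to \tana$ is surjective, every class in $\frak{m}_A/\frak{m}_A^2$ lifts to a class in $\frak{m}_B/\frak{m}_B^2$, and the corresponding $n$-fold product in $\frak{m}_B^n/\frak{m}_B^{n+1}$ maps to the desired element. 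Taking an appropriate lift of parity-homogeneous generators handles the super case with no modification.

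Finally, passing to the inverse limit: $B = \varprojlim B/\frak{m}_B^n$ and $A = \varprojlim A/\frak{m}_A^n$ by completeness, and a compatible sequence of surjections $f_n$ automatically satisfies Mittag-Leffler, so $\varprojlim f_n : B \to A$ is surjective and agrees with $f$. The only subtle point, and the one that might conceivably cause difficulty, is the assertion that $\frak{m}_A^n/\frak{m}_A^{n+1}$ is spanned by products of elements of $\frak{m}_A/\frak{m}_A^2$ in the super setting. This is no harder than in the ordinary setting once one chooses a homogeneous generating set of $\frak{m}_A$ (even and odd generators); signs from supercommutativity play no role because we are only producing a spanning set, not a basis.
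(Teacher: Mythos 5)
Your proof is correct and follows essentially the same route as the paper's: reduce the converse to surjectivity on the associated graded pieces $\mfr_B^{n}/\mfr_B^{n+1}\to\mfr_A^{n}/\mfr_A^{n+1}$ and then pass to the inverse limit using completeness and Mittag--Leffler. Your explicit spanning-set argument for the graded pieces is just the concrete form of the paper's appeal to right-exactness of $\on{Sym}_k^{n-1}$ applied to $\mfr_B/\mfr_B^2\twoheadrightarrow\mfr_A/\mfr_A^2$, and it handles the super grading equally well.
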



\begin{proof} For the forwards implication,  note that the map $B/\frk{B}^2 \to A/\faka^2$ induced by $B \to A$ is surjective, and that
\[ B/\fmb^2 = k \oplus \fmb/\fmb^2, \ \ \text{and} \ \ A/\faka^2 = k \oplus \faka/\faka^2 \] 
as super vector spaces. 
The map $B/\fmb^2 \to A/\faka^2$ clearly descends to a surjection $\tanb=\fmb/\fmb^2 \to \faka/\faka^2=\tana$ of super vector spaces.  

For the converse, set $A_n=A/\fmb^{n}$, $B_n=B/\fmb^n$ and consider the inverse systems $\{ A_n, f_{m,n}: A_m \to A_n \}$ and $\{ B_n, g_{m,n}: B_m \to B_n \} $ for $m \ge n$. Let $\{ \phi_n: B_n \to A_n \} $ be the morphism of inverse systems induced by $B \to A$. 
We will now prove that $\phi_n$ is surjective for each $n$. 
For $n=2$, $\phi_n$ is surjective by assumption. 
Apply $\on{Sym}_k^{n-1}(-)$ 
to the surjection $\fmb/\fmb^2 \to \faka/\faka^2$. The resulting map 
$$ \on{Sym}_k^{n-1}(\fmb/\fmb^2) \to \on{Sym}_k^{n-1}(\faka/\faka^2)$$ is surjective by right-exactness of $\on{Sym}_k^{n-1}(-)$. 
The maps $\on{Sym}_k^{n-1}(\fmb/\fmb^2) \to \fmb^{n-1}/\fmb^n$ and $\on{Sym}_k^{n-1}(\faka/\faka^2) \to \faka^{n-1}/\faka^n$ are clearly surjective.  The surjectivity of $\phi_n: \fmb^{n-1}/\fmb^n \to \faka^{n+1}/\faka^n$ now follows from commutativitity of the diagram  
\begin{center}
    \begin{tikzcd}
     \on{Sym}_k^{n-1}(\fmb/\fmb^2)  \arrow[r] \arrow[d] & \on{Sym}_k^{n-1}(\faka/\faka^2) \arrow[d] \\ 
     \fmb^{n-1}/\fmb^n \arrow[r, "\phi_n"] & \faka^{n+1}/\faka^n
    \end{tikzcd}
\end{center}

Observe that $\varprojlim \{\phi_n \}_n$ converges to $B \to A$ since $A$ and $B$ are complete. Thus to conclude that $B \to A$ is surjective it suffices to note that the inverse system $\{A_n, f_{m,n} \}$ satisfies the Mittag-Leffler (ML) condition. Indeed, $\{A_n, f_{m,n} \}$ is an inverse system of finite dimensional super vector spaces and is thus easily seen to satisfy ML  (see e.g, \cite[Example 9.1.3]{hartshorne2013algebraic}).

\end{proof}

Henceforth, let $p: B \to A$ be a surjection of local, Artin, superalgebras. Recall that we call $p$ a small extension if the kernel of $p$ is a non-zero principal ideal such that $\fmb t = (0)$. 






\begin{definition}[Compare {\cite[Definition 1.3]{schlessinger1968functors}}]
A small extension $p: A \to B$ of local, Artin superalgebras is called essential if for any other morphism $q: C \to A$ of local, Artin superalgebras with $pq$  surjective implies that $q$ is surjective. 
\end{definition}

\begin{lemma}[Compare {\cite[Lemma 1.4]{schlessinger1968functors}}] \label{essentials}

Let $p: A \to B$ be a small extension of local, Artin superalgebras. Then

\begin{itemize}
    \item[(a)] $p$ is essential if and only if the induced map $p_*: \tas \to \bas$ on Zariski cotangent spaces is an isomorphism. 
    
    \item[(b)]$p$ is not essential if and only if $p$ has a section $s: B \to A$ such that $ps = \on{id}_B$. 
\end{itemize}

\end{lemma}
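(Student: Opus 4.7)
The plan is to prove both parts by classical arguments adapted to the super setting. The essential fact I will exploit is that the kernel $I=(t)$ of a small extension is, because $\fm_A t = 0$ and $t$ is homogeneous, a one-dimensional $k$-super vector space concentrated in a single parity. I will also use a super Nakayama lemma: a morphism $q:C\to A$ of local Artin superalgebras is surjective if and only if the induced map $q_*:t_C^*\to t_A^*$ on Zariski cotangent spaces is surjective, which follows from applying Nakayama to $\fm_A$ as a finitely generated $A$-module with submodule $q(\fm_C)$.

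For (a), I first dispatch the easy direction. If $p_*$ is an isomorphism and $q:C\to A$ is a morphism with $pq$ surjective, then $(pq)_*=p_*\circ q_*$ is surjective, hence $q_*$ is surjective, and so $q$ is surjective by the super Nakayama remark above. For the converse, suppose $p_*$ fails to be injective; since $p$ surjective forces $p_*$ surjective, this means $t\notin \fm_A^2$. I will extend $t$ to a homogeneous minimal generating set $\{t,x_i,\theta_j\}$ of $\fm_A$ and let $C\subseteq A$ be the sub-superalgebra generated over $k$ by $\{x_i,\theta_j\}$. Then the inclusion $q:C\hookrightarrow A$ has $pq$ surjective (since the $p(x_i), p(\theta_j)$ generate $\fm_B$), while $C\neq A$ because $\fm_C$ is contained in $k\text{-span}\{x_i,\theta_j\}+\fm_A^2$ and so does not contain $t$. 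Hence $p$ is not essential.

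For (b), the $(\Leftarrow)$ direction is immediate: if $s:B\to A$ is a section, then $s$ is injective (since $ps=\mathrm{id}_B$) but not surjective (since $\ker p = I \neq 0$), while $p\circ s=\mathrm{id}_B$ is surjective, so taking $q:=s$ witnesses the failure of essentiality. For $(\Rightarrow)$, choose $q:C\to A$ with $pq$ surjective but $q$ not surjective, and let $C'=q(C)\subsetneq A$, a sub-superalgebra of $A$ with $p(C')=B$. The intersection $C'\cap I$ is a graded $k$-subspace of the one-dimensional super vector space $I$, so it is either $0$ or all of $I$. If $C'\cap I = I$, then $I\subseteq C'$; combined with $C'+I=A$ (from $p(C')=B$), this forces $C'=A$, contradicting $C'\subsetneq A$. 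Therefore $C'\cap I=0$, the restriction $p|_{C'}:C'\to B$ is a grading-preserving isomorphism of superalgebras, and its inverse is the desired section $s:B\to A$.

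The only subtlety in the super setting is purely book-keeping: the minimal generating set of $\fm_A$ must be chosen homogeneous, the sub-superalgebras $C$ in (a) and $C'$ in (b) must respect the $\z_2$-grading, and the observation enabling (b) is precisely that a principal ideal $I=(t)$ with $\fm_A t=0$ is one-dimensional over $k$ in a single parity, so its only graded $k$-subspaces are $0$ and $I$. Beyond this, the classical argument goes through verbatim, and I do not anticipate any substantive obstacle.
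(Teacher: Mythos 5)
Your proof is correct and follows essentially the same route as the paper: both directions of (a) rest on the cotangent-space criterion for surjectivity (the paper's Lemma \ref{surjcot}) together with a subalgebra built from a lifted generating set, and (b) rests on the observation that the kernel of the induced map from the subalgebra to $B$ lies in the one-dimensional module $I=(t)$ and must therefore vanish, yielding the section. The only differences are cosmetic — you argue (a) by contrapositive where the paper counts dimensions, and in (b) you work with the image of an arbitrary witness of non-essentiality rather than the specific subalgebra from (a) — so no further comment is needed.
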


\begin{proof} 

(a). Suppose $p_*$ is an isomorphism and let $q:C \to A$ be any other map with $C \to A \to B$ surjective. Then the induced map  $t_C^* \to \tas \lgr \bas$ is surjective, by Lemma \ref{surjcot}. In particular, $q_*: t_C^* \to \tas$ is surjective since $\tas \cong \bas$. Thus, $q$ is surjective, and $p$ is essential.

Conversely, suppose that $p$ is essential. Since $p$ is surjective $p_*: \tas \to \bas$ is surjective and $\on{sdim}_k \bas \le \on{sdim}_k \tas$.

Let $t_1, \dots, t_r, \alpha_1, \dots, \alpha_{r'}, |t_i|=0, |\alpha_i|=1$ be a basis for  $\bas$ and let $\ov{t}_1, \dots, \ov{t}_r, \ov{\alpha}_1, \dots, \ov{\alpha}_s \in A$ be a choice of lift of each $t_i, \alpha_i$ to $A$. Let
\[ C= k[\ov{t}_1, \dots, \ov{t}_r, \ov{\alpha}_1, \dots, \ov{\alpha}_{r'}] \subseteq A. \]
be the sub-superalgebra of $A$ generated by these lifts and consider the associated map $C \hr A \to B$. Since $t_C^* \cong \bas$, $C \hr A \to B$ is surjective. Thus $C \hr A$ is surjective, since $p$ is is essential. 
This means that $\on{sdim}_k \tas \le \on{sdim}_k t_C^*=\on{sdim}_k \bas$ and so $\on{sdim}_k t_A^*= \on{sdim}_k t_B^*$. In particular, $p_*: \tas \to \bas$ is an isomorphism since it is a surjection of super vector space of the same dimension. 
\newline

(b). Suppose $p$ has a section $s: B \to A$ and observe that if $p$ were essential, $s$ would have to be surjective, which is impossible.   

Conversely, suppose $p$ is not essential. By part (a), we have a strict inequality $\on{sdim}_k \bas < \on{sdim}_k \tas$ and so $C \subset A$ is strictly contained in $A$. Recall that $p$ maps $C$ onto $B$. We will now prove that $C \to B$ is also injective as this will automatically give us a section. 
Let $J = \on{ker}(C \hr A \to  B)$ and suppose that $J$ is non-zero. Observe that either $J=(t)$ or $J \subset (t)$ where $(t) = \on{ker}(A \to B$). 

Suppose $J=(t)$. We then have the following morphism of small extensions of $B$ by $(t)$, 
\begin{center}
    \begin{tikzcd}
    0 \arrow[r] & (t) \arrow[r] \arrow[d,no head,double] & C \arrow[r, "\phi"] \arrow[d, hook] & B \arrow[d, no head, double] \arrow[r] & 0 \\
    0 \arrow[r] & (t) \arrow[r] & A \arrow[r] & B \arrow[r] & 0
    \end{tikzcd}
\end{center}
where $\phi: C \to B$ is the composition $C \hr A \overset{p}{\to} B$. Since any morphism of small extensions is an isomorphism, we must have that $C=A$. However,  $C$ is strictly contained in $A$ and we have arrived at a contradiction. Thus, we are left to consider the case $J \subset (t)$.

If $J \subset (t)$, then $J= (ct)$ for some non-zero $c$ in $k$. However, every non-zero element in $k$ is invertible and so $J=(ct) =(t)$, which we already saw is impossible. Thus $J = (0)$ and $C = B$. 

\end{proof}

We arrive now at the super version 
Schlessinger's theorem. 
Fix $x \in \mxx(\spe k)$ and restrict the functor $\omxx_x$ to the category $C$ of Artin superschemes. 

\begin{theorem}[Compare 
{\cite[Theorem 2.11]{schlessinger1968functors}}] Suppose \emph{(S1)} and \emph{(S2)} hold for $\omxx_x$. Then there exists a complete, local, noetherian superalgebra $A$ and a formally versal, formal object $\whxi \in \wh{\omxx}_x(\on{Spf} A)$. 
\end{theorem}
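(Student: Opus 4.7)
The plan is to mimic Schlessinger's original construction of a versal hull, but taking care that odd directions are handled alongside even ones, which in practice means invoking Lemma \ref{factorization} to reduce every small extension we encounter to a composition of $(1|0)$- and $(0|1)$-small extensions, and then applying (S1) and (S2) separately in each parity.

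First, by (S2) the tangent space $t^*_{\omxx_x} := D_x(\sdual)$ is a finite-dimensional super vector space, say of super dimension $(r|s)$. Choose a basis $t_1,\dots,t_r,\theta_1,\dots,\theta_s$ (with $|t_i|=0,|\theta_j|=1$) and set
\[
S := k[[t_1,\dots,t_r,\theta_1,\dots,\theta_s]],
\]
a complete, local, Noetherian superalgebra with maximal ideal $\fm_S$. I will construct a decreasing chain of ideals $J_q\subset S$, with $J_q\supseteq J_{q+1}\supseteq \fm_S\cdot J_q$ and $J_q\supseteq \fm_S^{q+1}$, together with a compatible system $\whxi_q\in \omxx_x(\spe S/J_q)$, such that the formal object $\whxi:=(\whxi_q)\in \wh{\omxx}_x(\spf R)$ over $R=\varprojlim S/J_q$ is formally versal.

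For the base case, take $J_1=\fm_S^2$, so $R_1=S/\fm_S^2$ and the choice of basis gives a tautological element $\whxi_1\in \omxx_x(\spe R_1)$ representing the identity on $t^*_{\omxx_x}$. For the induction step, assuming $J_q$ and $\whxi_q$ are constructed, consider the collection of ideals $I\subseteq S$ satisfying $\fm_S J_q\subseteq I\subseteq J_q$ for which $\whxi_q$ lifts to $\omxx_x(\spe S/I)$. This collection is nonempty (it contains $J_q$) and, using (S1)a applied to the pushout $S/I_1\times_{S/(I_1+I_2)} S/I_2$ together with Lemma \ref{factorization} to decompose surjections into $(1|0)$- and $(0|1)$-small pieces, is closed under finite intersection. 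Since $J_q/\fm_S J_q$ is finite-dimensional over $k$, we may choose a minimal such ideal $J_{q+1}$ and fix a lift $\whxi_{q+1}\in \omxx_x(\spe S/J_{q+1})$.

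To establish formal versality of the resulting $\whxi$, the key point is to show that whenever $B'\twoheadrightarrow B$ is a surjection in $C$ and one has $\whxi\to B\gets R$ compatibly, there exists a lift $R\to B'$. By Lemma \ref{factorization} and an inductive argument on the length of $B'$, it suffices to treat $(1|0)$- and $(0|1)$-small extensions separately. In each case (S1)b identifies the set of lifts with a torsor under $D_x(\ker(B'\to B))$, and minimality of $J_{q+1}$ in the construction, combined with surjectivity of the map $t_B^*\to t_A^*$ ensured by Lemma \ref{surjcot}, forces the lift to exist. The argument here is identical to Schlessinger's original verification that the pro-represented hull $R$ satisfies the versal lifting property, with the proviso that the two small-extension cases $(1|0)$ and $(0|1)$ are treated separately, since the super dual numbers $\sdual$ split as $k[\epsilon]\otimes_k k[\eta]$ and (S1)b applied to each factor gives the required torsor identification for each parity.

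The main obstacle is purely bookkeeping: one must check that the inductive choice of $J_{q+1}$ can be made simultaneously coherent with respect to the even generators $t_i$ and the odd generators $\theta_j$. This is where (S1)a is essential — the pushout condition must be verified when at least one of the two surjections is a $(0|1)$-small extension, and here one uses that $A'\sqcup_A A''$ in the category of superalgebras is $A'\times_A A''$, which remains Artin and local, and that $\omxx_x$ respects such pushouts by the (S1)a hypothesis. Once this is in place, the remainder of Schlessinger's argument — the reduction to small extensions via Lemma \ref{factorization} and the minimality argument — carries over verbatim.
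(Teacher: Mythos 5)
Your proposal follows essentially the same route as the paper's proof in Appendix \ref{schlessinger proof appendix}: Schlessinger's hull construction on $S=k[[t_1,\dots,t_r,\theta_1,\dots,\theta_s]]$, minimal ideals $J_{q+1}$ whose existence rests on stability of the lifting condition under intersection via (S1)a, and the torsor/transitivity argument for formal versality. Two corrections to the write-up. First, $\sdual$ is \emph{not} $\dual\otimes_k\dodd$ (that ring is four-dimensional, with $\epsilon\eta\neq 0$); the splitting you need is the fiber product $\sdual\cong\dual\times_k\dodd$, which is what makes (S1)b give $\omxx_x(\spe\sdual)=\omxx_x(\spe\dual)\times\omxx_x(\spe\dodd)$ and hence the parity decomposition of the tangent space. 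Second, in the versality step the existence of \emph{some} lift $A\to B'$ compatible with $A\to B$ is not forced directly by ``minimality plus Lemma \ref{surjcot}''; the actual mechanism is the dichotomy of Lemma \ref{essentials}: either $A_q\times_B B'\to A_q$ admits a section (done), or it is essential, in which case $w:S\to A_q\times_B B'$ is surjective, (S1)a shows $\whxi_q$ lifts to $S/\ker(w)$, and minimality of $J_{q+1}$ forces $\ker(w)\subseteq J_{q+1}$ so that $w$ factors through $A_{q+1}$ — this dichotomy should be made explicit rather than subsumed into ``identical to Schlessinger.'' Your finer reduction to $(1|0)$- and $(0|1)$-small extensions via Lemma \ref{factorization} is harmless but not needed in the versality step, where the paper works directly with a small extension whose kernel is a finite-dimensional super vector space.
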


\begin{proof} Set $\omxx:= \omxx_x$.  The goal is to  construct simultaneously a formal superscheme $\spf A$ and a formal object $\whxi $ over $\spf A$ such that $\whxi$ has the infinitesimal lifting property in \eqref{formallyversallift}.

We recall that $\tant$ is a finite dimensional super vector space by (S1)b and (S2) and let $\on{dim}_k \plutan = r $ and $\on{dim}_k \negtan = s$. Choose a dual basis $x_1, \dots, x_r, \theta_1, \dots, \theta_s$ for $\tant$ and let
\[ S = k[[x_1, \dots, x_r, \theta_1, \dots, \theta_s]], \ \ |x_i|=0, |\theta_i|=1,  \]

We will now prove that $\whxi$ and $\spf A$ can be constructed as the projective limit of successive quotients of $S$. Let
\begin{itemize}
    \item $A_1= S/\frk{S}\cong k$
    \item $A_2=S/\frk{S}^2 S$
\end{itemize}
and observe that
\begin{equation} \label{atwo} \spe(A_2) \cong \spe(\dual) \sqcup_k \cdots \sqcup_k \spe(\dual) \sqcup_k \spe(\dodd) \sqcup_k \cdots \sqcup_k \spe(\dodd), \ (r \text{-times}, s  \text{-times})  \footnote{To convince yourself that such an isomorphism exists, consider $S=k[[x, \theta]]$. Elements of $S/\mfr_S^2$ are of the form $c_0 + c_1 x + c_2 \theta$ where $c_0, c_1, c_2 \in k$. Elements of $\dual \times_k k[\eta]$ are of the form $(k_0 + k_1 \epsilon, k_0 + k_1' \eta)$ where $k_0, k_1, k_1' \in k$. Define $S/\mfr^2 \lgr \dual \times_k k[\eta]: x \mapsto \epsilon, \theta \mapsto \eta$ }  \end{equation}  
where $\dual$ are the ordinary dual numbers and $|\eta|=1$. Recall that the set $\on{Hom}(\spe \sdual, \spe A_2)$ is the Zariski tangent space $t_{A_2}$ of $\spe A_2$. Let $\bigsqcup_{i=1}^r \spe( \dual)_i$ and $\bigsqcup_{i=1}^s \spe(k[\eta])_i$ denote the factors in \eqref{atwo}. Then, since $\on{Hom}$ is grading preserving, we have that 
\[ t_{A_2}= \on{Hom}( \spe(\dual), \bigsqcup_{i=1}^r \spe( \dual)_i) \oplus  \on{Hom}_k(\spe(\dodd),\bigsqcup_{i=1}^s \spe(k[\eta])_i )  \] 
is a decomposition of $t_{A_2}$ into its even and odd components. 

We will now prove that there exists an object $\whxi_2$ in $\omxx$ over $\spe A_2$ inducing a bijection $t_{A_2} \lgr \tant$.  By (S1)b and \eqref{atwo}, 
\begin{equation} \label{tangentspace} \omxx( \spe A_2) = (\plutan)^r \times (\negtan)^s. \end{equation} 
Now let $X_1, \dots, X_r, \Theta_1, \dots, \Theta_s $ be a basis for $\tant$ and set
\[ \whxi_2 = (X_1, \dots, X_r, \Theta_1, \dots, \Theta_s).  \] 
Observe that $\whxi_2$ is an object in $\omxx(\spe A_2)= (\plutan)^r \times (\negtan)^s$, that we have a natural map \[ t_{A_2}  \to \tant: (d: \spe \sdual \to \spe A_2) \to  d^* \whxi_2, \]  
and that the inclusions
\begin{equation} \label{canbasis} \{ p_i: \spe(\dual) \hr \bigsqcup_{i=1}^r \spe(\dual)_i \}_i, \ \ \{q_i: \spe(\dodd) \hr \bigsqcup_{i=1}^s \spe(k[\eta])_i \}_i \end{equation} 
form a basis for $t_{A_2}$. Observe that $(p_i)^* \whxi_2=X_i$ and $(q_i)^* \whxi_2 = \Theta_i. $
Therefore, $t_{A_2} \to \tant$ is an isomorphism since 
any morphism $\sdual \to \spe A_2$ is linear combination of the inclusions $p_i$ and $q_i$ and since the $X_i$ and $\Theta_i$ form a basis for $\tant$.

We now proceed by induction. 
Suppose we have found a pair $(\whxi_q,A_q=S/J_q)$ formally versal up to order $q$. We claim that there exists an ideal $J_{q+1} \subset S$ which is minimal amongst ideals $J$ satisfying the following conditions
\begin{itemize}
    \item[(a)] $\frk{S}J_q \subseteq J \subseteq J_q$
    \item[(b)] $\whxi_q$ lifts to $S/J$. \footnote{At first glance it seems like we should take $J_{q+1}=\frk{S}^{q+1}$; However, $\frk{S}^{q+1}$ may not satisfy condition (b) and so we need to be more careful.}
\end{itemize}

Let $\mathscr{J}$ denote the set of ideals in $S$ satisfying the above conditions and note that each $J \in \msj$ is a sub super vector space of $J_q/\frk{S}J_q$. Thus it suffices to prove that $\msj$ is stable under pairwise intersection, that is if $J,K \in \msj$ then $J \cap K \in \msj$. Clearly, given any $J,K \in \msj$ we may enlarge $J$ so that $J + K = J_q$ without changing the intersection of $J \cap K$. From this we obtain that
\[ S/J \times_{S/J_q} S/K \cong S/(J \cap K). \]
From condition (S1)a we have a surjection 
\[ \mxx(S/(J \cap K))= \mxx(S/J \times_{S/J_q} S/K) \to \mxx(S/J) \times_{\mxx(S/J_q)} \mxx(S/K)  \] 
from which it follows that $\whxi_q$ lifts to $S/J \cap K$ (since $v_q$ lifts to $S/J$ and $S/K$). We now define $J_{q+1}$ to be the intersection of all members in $\msj$ and take $A_{q+1}=S/J_{q+1}$ and let $v_{q+1} \in \mxx(A_{q+1})$ be any extension of $\whxi_q$ to $A_{q+1}$. 
Now define 
\[ J = \bigcap_{q=2,3, \cdots} J_q \]
and set $A=S/J = \varprojlim_q S/J_q$, and $\whxi = \varprojlim \whxi_j \in \whxx(\spf A)$. 

We now claim that the pair $(\whxi, A)$ is formally versal. Let $(B',b') \to (B,b) \to (k,x)$ be an infinitesimal extension with kernel $I$ (a finite dimensional $k$ super vector space) where $b', b$ and $x$ are the respective objects in $\mxx$. Suppose we are given a map $u =p(\whxi \to b): A \to B$. Since $\frk{B} \cdot I=0$, we get an isomorphism
\[ B' \times_B B' \lgr B' \times_k k[I]. \] 
Now applying condition (S1).b, we get a surjection
\[ \mxx_b(B') \times (\tant \otimes I) \to \mxx_b(B') \times \mxx_b(B')  \] 
which says that the space $\tant \otimes I$ acts transitively on $\mxx_b(B')$. Therefore, if we can show that there exists a map $ A \to B'$ compatible with the given map $u: A \to B$, then the problem reduces to proving that there is a surjection between the set of maps $A \to B'$ compatible with $u:A \to B$ and $\tant \otimes I$.  

We first show that there exists a map $A \to B'$ compatible with $u: A \to B$. Since $A = \varprojlim_q S/J_q$, we can factor $u$ as $(A, \whxi) \to (A_q, \whxi_q) \to (B, b)$ for some $q$. Thus it suffices to prove that there exists an arrow completing the diagram 
\begin{center}
\begin{tikzcd}
A_{q+1} \arrow[d] \arrow[r, dashed] & B' \arrow[d] \\
A_q \arrow[r]                       & B           
\end{tikzcd}
\end{center}
This is equivalent to the existence of an arrow filling in the diagram 
\begin{center}
\begin{tikzcd}
S \arrow[d] \arrow[r, "w"]                & A_q \times_B B' \arrow[d,"p"] \arrow[r] & B' \arrow[d] \\
A_{q+1} \arrow[r] \arrow[ru, "u'", dashed] & A_q \arrow[r]                       & B           
\end{tikzcd}
\end{center}
where $w$ was chosen to make the diagram commute. To prove that $\whxi$ exists we will now show that the map $p$ has a section. Suppose $p$ does not have a section, then by Lemma \ref{essentials} $p$ is essential and $w$ is a surjection. Thus, $A_q \times_B B' = S/\on{ker}(w)$. Furthermore, from condition (S1)a, applied to $A_q \times_B B'$, we find that $\whxi_q \in \mxx(A_q)$ must lift back to an object in $\mxx(A_q \times_B B') = \mxx(S/\on{ker}(w))$. By construction, this means that $\on{ker}(w) \subseteq J_{q+1}$. Thus $w$ factors through $A_{q+1}$, and so we obtain the map $u': A \to B'$ compatible with $u: A \to B$. 

To finish the proof, recall that the set of maps $A \to B'$ compatible with $A \to B$ is a torsor over $\on{Der}_k(A, I)= t_A \otimes I$ where $I=\on{ker}(B' \to B)$. We already proved that $t_A \cong \tant$ and so $t_A \otimes_k I \cong \tant \otimes I$.

\end{proof}

\section{Proof of Flenner's criterion} 
\label{flenner proof appendix}

Here we prove Theorem \ref{flennerm}, 
following \cite{flenner1981kriterium} except
in the proof of Lemma \ref{copr}, where we can
reduced to the bosonic case rather than recapitulating 
the original proof. 

\subsection{\normalsize Half-Exact functors}

\begin{definition} We say that a functor $F: \on{Coh}(R) \to \on{Coh}(R)$ is \emph{half-exact} if for every exact sequence
\[ 0 \to M' \to M \to M'' \to 0 \]
in $\on{Coh}(R)$, the sequence
\[ F(M') \to F(M) \to F(M'')  \]
is exact. 
\end{definition} 


\begin{lemma}[Compare to {\cite[Lemma 2.1]{flenner1981kriterium}}] \label{halfexact} Suppose \emph{(S1)a} holds. Then the functor $D_{v_0}(-)$ is half-exact. 
\end{lemma}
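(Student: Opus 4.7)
The plan is to verify half-exactness directly by unwinding the definition of $D_{v_0}(M) = \omxx_{v_0}(T_0[M])$ and applying condition (S1)a. Functoriality in $M$ is contravariant in the scheme $T_0[M]$: a morphism $\phi \colon N \to N'$ of coherent $R_0$-modules induces a ring map $R_0[N] \to R_0[N']$, $n \mapsto \phi(n)$, hence a scheme map $T_0[N'] \to T_0[N]$ along which we pull back. For the sequence $0 \to M' \xrightarrow{\alpha} M \xrightarrow{\beta} M'' \to 0$, the induced maps $D_{v_0}(\alpha)$ and $D_{v_0}(\beta)$ are pullbacks along $T_0[M] \to T_0[M']$ and along the closed immersion $T_0[M''] \hookrightarrow T_0[M]$, respectively. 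The composition $D_{v_0}(\beta) \circ D_{v_0}(\alpha)$ vanishes because $\beta \alpha = 0$ forces the composite scheme map $T_0[M''] \to T_0[M] \to T_0[M']$ to factor through the zero section $T_0 \hookrightarrow T_0[M']$, so every pullback along this composite reduces to the trivial extension.

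For exactness at $D_{v_0}(M)$, suppose $\xi \in D_{v_0}(M)$ has $D_{v_0}(\beta)(\xi)$ trivial. I would apply (S1)a with $T = T_0[M'']$, $T' = T_0[M]$, $Y = T_0$ (so $T \to Y$ is the projection and $T_0 \hookrightarrow T \to Y$ is the identity, hence a closed immersion), and $t = v_0|_{T_0[M'']} \in \mxx(T)$. A direct computation using exactness shows
\[ R_0[M] \times_{R_0[M'']} R_0 \;\cong\; R_0 \oplus M' \;=\; R_0[M'], \]
so the pushout $T' \sqcup_T Y$ is canonically $T_0[M']$. By hypothesis $\xi \in \omxx_t(T')$, and $v_0 \in \omxx_t(Y)$ via the cartesian arrow $t \to v_0$ over the projection. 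By (S1)a the pair $(\xi, v_0)$ lifts to some $\eta \in \omxx_t(T_0[M'])$. Because the structure map $T_0[M''] \to T_0[M']$ factors through the zero section, $\eta$ is naturally an element of $D_{v_0}(M') = \omxx_{v_0}(T_0[M'])$, and its image under $D_{v_0}(\alpha)$, namely pullback along $T_0[M] \to T_0[M']$, is $\eta|_{T_0[M]} = \xi$.

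The main technical point is the pushout identification: exactness of $0 \to M' \to M \to M'' \to 0$ is used essentially in showing $R_0[M] \times_{R_0[M'']} R_0 \cong R_0[M']$, since an element $(r+m, r')$ of the fiber product must satisfy $r + \beta(m) = r'$, forcing $r = r'$ and $m \in \on{ker}(\beta) = \on{im}(\alpha) \cong M'$. Once this identification is in hand, exactness at $D_{v_0}(M)$ is just the universal property of the pushout combined with (S1)a.
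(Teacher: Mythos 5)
Your proof is correct and follows essentially the same route as the paper: both establish $D_{v_0}(\beta)\circ D_{v_0}(\alpha)=0$ via the factorization of $T_0[M'']\to T_0[M']$ through the zero section, and both obtain the reverse inclusion by applying (S1)a to $T=T_0[M'']$, $T'=T_0[M]$, $Y=T_0$ together with the identification $R_0[M]\times_{R_0[M'']}R_0\cong R_0[M']$. Your write-up is, if anything, slightly more explicit about the functoriality conventions and the pushout identification than the paper's.
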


\begin{proof} 

Let
\begin{equation} \label{fgmod} 0 \to M' \overset{f}{\to} M \overset{g}{\to} M''  \to 0 \end{equation} be an exact sequence of of finitely generated modules of a reduced ring $R_0$.  Let $v_0 \in \mxx(\spe R_0)$ and consider the sequence
\[ \dvnot(M') \overset{j}{\to} \dvnot(M) \overset{p}{\to} \dvnot(M''): v_0 \mapsto f^* v_0 \mapsto g^*(f^*v_0).  \]

We will first prove that $p \circ j=0$, \emph{i.e.}, that there is a section $s: \spe R_0[M''] \to \spe R_0$ with an arrow $g^*(f^* v) \to v_0$ in $\mxx$ over it.  The sequence in \eqref{fgmod} induces a sequence of trivial extensions of $R_0$, 
 \begin{equation} \label{pushy} R_0[M'] \to R_0[M] \to  R_0[M'']: (r,m') \mapsto (r, f(m')) \mapsto (r, gf(m')) = (r,0) \end{equation}  
sending $R_0[M']$ to a copy of $R_0$ sitting inside of $R_0[M'']$. In particular, \eqref{pushy} determines a section $s: R_0 \hr R_0[M'']$. 
The maps in \eqref{pushy} induce maps $\spe R_0[M''] \overset{s}{\to} \spe R_0 \to \spe R_0[M] \to \spe R_0[M']$. In particular, the pullback of $\vnot$ to $\spe \rmpp$ factors through $\spe R_0$ and so there is an arrow $g^*(f^* v) \to v_0 $ in $\mxx$ over $s$. 



We will now prove that $p^{-1}(0)=\on{im}(j)$. Take
 $v $ be an object in $ \dvnot(M)$ such that $p(v)=0$. Recall that $0$ is the trivial extension of $v_0$ over $\spe R_0[M'']$. Consider the diagram 
 \begin{center}
\begin{tikzcd}
0 \arrow[d] \arrow[r] & v \arrow[d, dashed]        \\
v_0 \arrow[r, dashed]                           & b
\end{tikzcd}
\end{center}
in $\mxx$ over $\spe R_0 \to \spe R_0[M] \leftarrow \spe R_0[M'']$. By (S1)a, there exists an object $b$ over $\spe(R_0 \times_{R_0[M'']} R_0[M])$ filling in the dotted arrows. Since $R_0 \times_{R_0[M'']} R_0[M]=\rmp$, the object $b$ is in $\mxx(\spe \rmp)$. Thus, $v \in \on{im}(j)$.  
\newline

\end{proof}

\begin{remark} \label{halfexactext} Lemma \ref{halfexact} holds  for the functor
 $\ex(v,-): \on{Coh}(R) \to \on{Set}$ in the following sense. For every exact sequence of finitely generated $R$-modules, 
 $ 0 \to M' \to M \to M'' \to 0 $, the sequence 
\begin{center}
    \begin{tikzcd}
    \ex(v,M') \arrow[r, "j"] & \ex(v, M) \arrow[r, "p"] & \ex(v, M'') 
    \end{tikzcd}
\end{center} 
is such that $p^{-1}(0)=\on{im}(j)$. 
\end{remark} 


\subsection{\normalsize Vergleichsatz (Comparison Theorem) }

In this section we closely follow \cite[Section 6]{flenner1981kriterium}. 

\begin{lemma}[Compare with {\cite[Lemma 6.2]{flenner1981kriterium}}]  Let $R$ be a Noetherian superalgebra and let \[ \tilde{R} =R[T_1, \dots, T_k, T_{k+1}, \dots, T_{k+s}], \ \  |T_i|=0, \  1 \le i \le r, \ \  |T_{k+i}|=1, \   1 \le i \le s, \]. Let $M$ be a ($\z_{ \ge 0} \times \z_2$)-graded $\rtwid$-module. Then

\begin{itemize}
    \item[\emph{(i)}] Let 
    \[ M' \to M \to M''\]
    be an exact sequence of graded $\rtwid$-modules. Then $M$ is finitely generated  over $\rtwid$  if
     $M', M''$ are finitely generated over $\rtwid$, 
    
    \item[(ii)] If $M/T_i M$ is a finitely generated $\rtwid$-module, then so is $M$. 

\end{itemize}

\end{lemma}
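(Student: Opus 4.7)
The plan is first to record that $\tilde{R}$ is itself a Noetherian superalgebra. Indeed, $R[T_1, \dots, T_k]$ is Noetherian by the classical Hilbert basis theorem, and then $\tilde{R} = R[T_1,\dots,T_k]\otimes_{R}\Lambda_R[T_{k+1},\dots,T_{k+s}]$ is a free module of finite rank $2^s$ over the Noetherian ring $R[T_1,\dots,T_k]$ (using $T_{k+i}^2=0$ from supercommutativity), hence Noetherian. In particular, submodules and quotients of finitely generated (graded) $\tilde{R}$-modules are themselves finitely generated.

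For part (i), exactness at $M$ in the sequence $M' \xrightarrow{f} M \xrightarrow{g} M''$ says $\ker g = \mathrm{im}\,f$. Then $\mathrm{im}\,g \subseteq M''$ is a graded submodule of a finitely generated module, hence finitely generated by Noetherianity; and $\mathrm{im}\,f$ is a quotient of $M'$, hence finitely generated. Since $M / \mathrm{im}\,f = M/\ker g \cong \mathrm{im}\,g$, the module $M$ sits in a graded short exact sequence
\[
0 \to \mathrm{im}\,f \to M \to \mathrm{im}\,g \to 0
\]
with finitely generated outer terms, so $M$ is finitely generated over $\tilde{R}$.

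For part (ii), handle the even and odd cases separately. Fix a finite set of homogeneous lifts $m_1,\dots,m_n \in M$ of generators of $M/T_i M$, and let $N := \sum_j \tilde{R}\, m_j \subseteq M$, so that $M = N + T_i M$. If $T_i$ is \emph{even} (so $1 \le i \le k$), then $T_i$ has strictly positive $\mathbb{Z}_{\ge 0}$-degree, and the quotient $P := M/N$ is a $\mathbb{Z}_{\ge 0}$-graded $\tilde{R}$-module satisfying $P = T_i P$; induction on degree (graded Nakayama) forces $P = 0$, so $M = N$ is finitely generated. If $T_i$ is \emph{odd} (so $k+1 \le i \le k+s$), then supercommutativity gives $T_i^2 = 0$, whence $T_i M = T_i N + T_i^2 M = T_i N$ and therefore $M = N + T_i N$ is generated over $\tilde{R}$ by the finite set $\{m_j,\, T_i m_j\}_{j=1}^n$.

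There is no serious obstacle; the odd case is in fact easier than its bosonic counterpart because the nilpotence $T_i^2 = 0$ truncates the Nakayama-style iteration after a single step. The only care required is in (i), where one must invoke the super Hilbert basis theorem to pass from the Noetherianity of $R$ to that of $\tilde{R}$, and in the even part of (ii), where one must respect the bigrading when running the inductive graded Nakayama argument, ensuring that at each pair $(d, \sigma) \in \mathbb{Z}_{\ge 0} \times \mathbb{Z}_2$ the relation $P_{(d,\sigma)} = T_i P_{(d-1,\sigma+1)}$ (or $P_{(d-1,\sigma)}$, depending on the parity of $T_i$) reduces to strictly smaller $\mathbb{Z}_{\ge 0}$-degree.
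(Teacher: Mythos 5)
Your proof is correct, and for part (ii) it takes a genuinely different route from the paper. The paper disposes of (i) with ``obvious'' (you supply the standard argument: $\mathrm{im}\,f$ is a quotient of $M'$, $\mathrm{im}\,g \cong M/\mathrm{im}\,f$ is a submodule of $M''$, and Noetherianity of $\tilde{R}$ --- which you rightly pause to establish via the Hilbert basis theorem plus the module-finite odd extension --- makes both finitely generated). For (ii) the paper argues by contradiction: it asserts that if $M$ were of ``infinite rank'' then $T_i$ would annihilate infinitely many generators, deduces that $T_iM$ is finitely generated, and then applies (i) to $T_iM \to M \to M/T_iM$; this sketch is hard to make precise as written. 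Your argument is direct and constructive: lift generators of $M/T_iM$ to a finitely generated submodule $N$ with $M = N + T_iM$, then kill $P = M/N = T_iP$ by induction on the $\mathbb{Z}_{\ge 0}$-degree when $T_i$ is even (graded Nakayama), and observe that $T_i^2 = 0$ truncates the iteration after one step when $T_i$ is odd, so $M$ is generated by $\{m_j, T_im_j\}$. This buys a complete and checkable proof where the paper has only a sketch, and it isolates the pleasant feature that the odd variables are \emph{easier} than the bosonic case. The one hypothesis you use that the lemma leaves implicit is that each $T_i$ has strictly positive $\mathbb{Z}_{\ge 0}$-degree; this is needed for the even-variable Nakayama step (and indeed for the statement to be true at all), and it matches Flenner's setup and the paper's eventual application to the Rees algebra $B_{\mathfrak{a}}(R)$, where the $T_i$ correspond to degree-one generators of $\mathfrak{a}$.
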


\begin{proof} Part (i) is obvious. For part (ii), suppose, by contradiction, that $M$ has infinite rank.
Then multiplication by $T_i$ must annihilate an infinite subset of the set of  generators of $M$.  Therefore, $T_i M$  must be finitely generated. Now apply part (i) to the sequence
\[ T_k M \to M \to M/T_k M, \] 
to conclude that $M$ is finitely generated over $\rtwid$

\end{proof}

Now let $\tura \subset R$ be an ideal and let $M_i, i \in \z$ be finitely generated $R$-modules. Define \[ M := \bigsqcup_{i \ge 0} M_i = M_0 \oplus M_1 \oplus \cdots . \]

Let $\jj \subset R $ denote the ideal of odd nilpotents and consider the following graded $R$-algebras, 
\begin{align*}
B_{\tura}(R) & := \bigsqcup_{i \ge 0} \tura^i R= R \oplus \tura \oplus \tura^2 \oplus \tura^3 \oplus \cdots  \\
B_{\tura}(\jj) & := \bigsqcup_{i \ge 0} \jj \tura^i = \jj \oplus \jj \tura \oplus \cdots  \\
B_{\tura}(R)/\jj B_{\tura}(R) & := \bigsqcup_{i \ge 0} \tura^i/\jj \tura^i= R/\jj \oplus \tura/\jj \tura \oplus \cdots 
\end{align*}

\begin{lemma} \label{fint} Let $R$ be a Noetherian superalgebra and let $M$ be an $R$-module. Then $M$ is finitely generated over $R$ if and only if $M_{\bos}=M/\jj M$ is finitely generated over $R_{\bos}=R/\jj$.

\end{lemma}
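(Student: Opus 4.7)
The forward direction is immediate: if $m_1, \dots, m_n$ generate $M$ over $R$, then their images $\bar{m}_1, \dots, \bar{m}_n$ generate $M/\jj M$ over $R/\jj$. The substantive content is the converse, which I plan to prove by a Nakayama-style argument using the fact that the odd nilpotent ideal $\jj$ is itself nilpotent.

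The first step is to observe that $\jj^{N+1} = 0$ for some $N \ge 0$. Indeed, $R$ is Noetherian, so $\jj$ is finitely generated; we may take generators $\theta_1, \dots, \theta_s \in R^-$. In a supercommutative $k$-algebra (with $\operatorname{char} k = 0$) any odd element satisfies $\theta_i^2 = 0$, and any product $\theta_{i_1} \cdots \theta_{i_{s+1}}$ with $s+1$ factors must contain a repetition and hence vanish. Thus $\jj^{s+1} = 0$, and we obtain the finite filtration
\[
M \supseteq \jj M \supseteq \jj^2 M \supseteq \cdots \supseteq \jj^{s} M \supseteq \jj^{s+1} M = 0.
\]

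Now assume $M/\jj M$ is finitely generated over $R_{\bos} = R/\jj$. Choose generators $\bar m_1, \dots, \bar m_n$ and lift them to elements $m_1, \dots, m_n \in M$. I claim these generate $M$ as an $R$-module. The proof is by descending induction on $i$: I will show that for every $i \ge 0$,
\[
\jj^i M \subseteq R\cdot\{m_1,\dots,m_n\} + \jj^{i+1} M.
\]
The base case $i=0$ holds by the choice of the $m_j$: any $m \in M$ satisfies $m - \sum r_j m_j \in \jj M$ for some $r_j \in R$ (chosen to lift the $\bar r_j \in R_{\bos}$ expressing $\bar m$ in terms of the $\bar m_j$). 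For the inductive step, any element of $\jj^i M$ is a finite sum of terms of the form $\theta_{k_1}\cdots\theta_{k_i}\, n$ with $n \in M$; writing $n = \sum r_j m_j + n'$ with $n' \in \jj M$ and absorbing the odd factors into the coefficient in $R$, we see that $\theta_{k_1}\cdots\theta_{k_i}\, n \in R\cdot\{m_1,\dots,m_n\} + \jj^{i+1} M$.

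Iterating this inclusion and using that $\jj^{s+1}M = 0$, we conclude $M = R\cdot\{m_1,\dots,m_n\}$, so $M$ is finitely generated over $R$. The only nontrivial input is the nilpotence of $\jj$, which follows at once from supercommutativity and the Noetherian hypothesis; everything else is bookkeeping. I do not anticipate any serious obstacle.
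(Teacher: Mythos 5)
Your argument is correct. The paper actually states Lemma \ref{fint} without any proof at all, so there is nothing to compare against; your proof --- finite odd generation of $\jj$ from the Noetherian hypothesis, nilpotence $\jj^{s+1}=0$ from supercommutativity in characteristic zero, and the standard Nakayama-for-nilpotent-ideals iteration $M \subseteq R\cdot\{m_j\} + \jj^{i}M$ for all $i$ --- is exactly the expected one and fills that gap cleanly.
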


\begin{lemma} \label{copr} Let $F: \on{Coh}(R) \to \on{Coh}(R)$ be a half-exact functor. If $\bigsqcup_{i \ge 0} M_i$ is finitely generated over $B_{\tura}(R)$, then $F(M)= \bigsqcup_{i \in \z} F(M_i)$ is finitely generated over $B_{\tura}(R)$. 
\end{lemma}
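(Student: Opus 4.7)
The plan is to reduce the assertion to Flenner's bosonic Theorem~6.1 in \cite{flenner1981kriterium} by inducting on the odd nilpotence degree of $R$, i.e., the smallest integer $r\ge 0$ with $\jj^{r+1}=0$. In the base case $r=0$, the ring $R=R_{\bos}$ is purely bosonic, so $\on{Coh}(R)=\on{Coh}(R_{\bos})$, $B_{\tura}(R)$ is a bosonic graded Noetherian ring, and the conclusion is Flenner's Theorem~6.1 applied verbatim.

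For the inductive step with $r\ge 1$, I would consider for each $i$ the short exact sequence
\[ 0 \to \jj^r M_i \to M_i \to M_i/\jj^r M_i \to 0. \]
Summing over $i$, the submodule $\bigsqcup \jj^r M_i$ is killed by $\jj$ (since $\jj^{r+1}=0$), so it is a graded module over the bosonic ring $B_{\tura}(R)/\jj B_{\tura}(R) \cong B_{\ov{\tura}}(R_{\bos})$, finitely generated because $B_{\tura}(R)$ is Noetherian (Hilbert basis for superrings) and $\bigsqcup M_i$ is finitely generated. The quotient $\bigsqcup M_i/\jj^r M_i$ is a finitely generated graded module over $B_{\tura}(R/\jj^r)$, and $R/\jj^r$ has odd nilpotence degree $r-1$, so the inductive hypothesis applies and yields finite generation of $\bigsqcup F(M_i/\jj^r M_i)$ over $B_{\tura}(R/\jj^r)$ (hence over $B_{\tura}(R)$). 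Half-exactness of $F$ applied to the short exact sequence gives middle-exact sequences $F(\jj^r M_i) \to F(M_i) \to F(M_i/\jj^r M_i)$; combined with finite generation of the outer terms and part~(i) of the preceding lemma, this yields finite generation of $\bigsqcup F(M_i)$ over $B_{\tura}(R)$, completing the induction.

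The principal obstacle is showing finite generation of $\bigsqcup F(\jj^r M_i)$ over $B_{\tura}(R)$. Although $\bigsqcup \jj^r M_i$ is a finitely generated module over the bosonic ring $B_{\ov{\tura}}(R_{\bos})$, the functor $F$ is an endofunctor of $\on{Coh}(R)$ whose restriction to the subcategory of $R_{\bos}$-modules does not a priori take values in $\on{Coh}(R_{\bos})$, as would be required to invoke the bosonic Flenner Theorem~6.1. To resolve this, I would introduce the composite $\tilde F\colon \on{Coh}(R_{\bos}) \to \on{Coh}(R_{\bos})$ given by $\tilde F(N) := F(N)/\jj F(N)$, apply the bosonic Theorem~6.1 to $\tilde F$ to obtain finite generation of $\bigsqcup \tilde F(\jj^r M_i)$ over $B_{\ov{\tura}}(R_{\bos})$, and then recover finite generation of $\bigsqcup F(\jj^r M_i)$ over $B_{\tura}(R)$ via Lemma~\ref{fint}. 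The genuinely nontrivial point is verifying that $\tilde F$ remains half-exact, since the bosonic reduction $-\otimes_R R_{\bos}$ is only right-exact; this would be checked by exploiting the finite filtration $F(N) \supset \jj F(N) \supset \cdots \supset \jj^{r+1}F(N)=0$ together with the fact that $N$ itself is already killed by $\jj$, which forces the middle-exactness to survive the passage to the bosonic quotient.
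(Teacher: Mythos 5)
Your overall strategy---d\'evissage along the filtration of $M$ by powers of $\jj$, reduction of the $\jj$-killed graded pieces to Flenner's bosonic Satz~6.1, and reassembly via half-exactness together with part~(i) of the preceding lemma---is the same as the paper's; the paper splits off the top quotient $M/\jj M$ and filters the kernel $\jj M \supset \jj^2 M \supset \cdots$, whereas you split off the bottom piece $\jj^r M$ and induct on the quotient, which is only a cosmetic difference. The substantive problem is that your argument has a gap exactly where you locate the difficulty: the half-exactness of $\tilde F(N):=F(N)/\jj F(N)$ is asserted, not proved, and the heuristic you offer does not yield it. For a short exact sequence $0 \to N' \to N \to N'' \to 0$ of $\jj$-killed modules, writing $f\colon F(N)\to F(N'')$, middle-exactness of $\tilde F$ amounts to the identity $f^{-1}\bigl(\jj F(N'')\bigr)=\ker(f)+\jj F(N)$, equivalently $\jj F(N'')\cap \on{im}(f)=\jj\cdot\on{im}(f)$. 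Nothing about $N$ being killed by $\jj$ bears on this: it is a condition on the $R$-modules $F(N)$ and $F(N'')$, which for an abstract half-exact $F$ need not be killed by $\jj$, and the finite $\jj$-adic filtration of $F(N)$ does not produce the required equality. The cleaner resolution of your ``principal obstacle'' is to observe that for the $\ot$-linear functors to which the lemma is actually applied ($D_v$, $\ex(v,-)$, $\on{Ext}^i(L^{\bullet},-)$), the action of $\jj$ on $F(N)$ is the image under $F$ of the (zero) action of $\jj$ on $N$, so $F$ already carries $\jj$-killed modules to $\jj$-killed modules and the bosonic Satz~6.1 applies directly, with no auxiliary $\tilde F$; absent such a linearity hypothesis you must supply a proof of half-exactness of $\tilde F$, which I do not see how to do.

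A second, related gap: your inductive step invokes the inductive hypothesis for the module $M/\jj^r M$ over the smaller ring $R/\jj^r$, but the inductive hypothesis concerns half-exact endofunctors of $\on{Coh}(R/\jj^r)$, and $F$ is an endofunctor of $\on{Coh}(R)$ which does not a priori restrict to one---the very mismatch you flag in the base case recurs here unaddressed. The repair is to keep $R$ and $F$ fixed throughout and induct instead on the least $k$ with $\jj^{k}M=0$, using the sequence $0 \to \jj M \to M \to M/\jj M \to 0$ (sub killed by $\jj^{k-1}$, quotient by $\jj$); this is in substance how the paper's proof is organized.
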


\begin{proof} The bosonic case is proved in \cite[Satz 6.1]{flenner1981kriterium}. Let
\[ M/\jj M = \bigsqcup_{i \ge 0} M_i/\jj M_i. \] 
Observe that $M/\jj M$ is a finitely generated $B_{\tura}(R)/\jj B_{\tura}(R)$-module by Lemma \ref{fint} and that $F(M/\jj M)$ is a finitely-generated $B_{\tura}(R)/\jj B_{\tura}(R)$-module by \cite[Satz 6.1]{flenner1981kriterium}. Furthermore, $F(M/\jj M)$ is a finitely-generated $ B_{\tura}(R)$-module since the map $B_{\tura}(R) \to B_{\tura}(R)/\jj B_{\tura}(R)$ is surjective. 

Now apply the functor $F$ to the exact sequence of $B_{\tura}(R)$-modules
\[ 0 \to \jj M \to M \to M/\jj M \to 0\] 
to get an exact sequence of $B_{\tura}(R)$-modules
\[ F(\jj M) \to F(M) \to F(M/\jj M). \]
We now claim that $F(\jj M)$ is finite. We want to prove this claim because it automatically implies that $F(M)$ is finite; from the above sequence we get an injective map $F(M)/F(\jj M) \to F(M/\jj M)$, with $F(M/\mm J)$ is finitely generated, and so if $F(\jj M)$ is finite so must be $F(M)$, otherwise the quotient $F(M)/F(\jj M)$ would be infinite and we would have a contradiction.

We prove the claim by induction on $\jj$. Let $s$ be the smallest integer for which $\jj^{s+1}=0$.
We can apply \cite[Satz 6.1]{flenner1981kriterium} to the finite $B_{\tura}(R)/\jj B_{\tura}(R)$-module $\jj M/\jj^{2} M$ to conclude that $F(\jj M/\jj^{2} M)$ is finite over $B_{\tura}(R)/\jj B_{\tura}(R)$ and thereby also over $B_{\tura}(R)$. 
Now $\jj M/\jj^{3} M$ is finite over $\jj M/\jj^2 M$ and thus also over $B_{\tura}(R)/\jj B_{\tura}(R)$. Thus $F(\jj M/\jj^{3} M)$ is finite over $B_{\tura}(R)/\jj B_{\tura}(R)$ and thereby also over $B_{\tura}(R)$. By induction on $k=1, \dots, s$ we find that $F(\jj M)$ is finite over $B_{\tura}(R)/\jj B_{\tura}(R)$ and thereby also over $B_{\tura}(R)$. 
Thus $F(M)$ is finite and this concludes the proof.


\end{proof}


\begin{corollary}[Compare with {\cite[Corollary 6.3]{flenner1981kriterium}}] \label{indy} Let $F: \on{Coh}(R) \to \on{Coh}(R)$ be a half-exact functor and let $\tura \subset R $ be an ideal.  Then

\begin{itemize}

\item[(1)] $\bigsqcup_{i \ge 0} F(\tura^i M)$ is a finite $\bigsqcup_{i \ge 0} \tura^i$-module, and 

\item[(2)] The canonical map
\[  \{ F(M)/\tura^i F(M)  \}_{i \ge 0} \to \{ F(M/\tura^i M) \}_{i \ge 0} \]
is injective. 
\end{itemize} 

\end{corollary}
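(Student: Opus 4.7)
The plan is to derive both claims from Lemma \ref{copr} applied to the graded $B_{\tura}(R)$-module $N := \bigsqcup_{i \ge 0} \tura^i M$. I first note $N$ is finitely generated over $B_{\tura}(R)$: picking any finite $R$-generating set $m_1,\dots,m_s$ of $M$ and viewing it in degree zero gives generators, since every element of $\tura^i M$ is a $\tura^i$-combination of the $m_j$. The grading on $\bigsqcup F(\tura^i M)$ is induced by functoriality from the scaling maps $\tura^i M \to \tura^{i+k} M,\ m \mapsto am$ for $a \in \tura^k$. Lemma \ref{copr} applied to the half-exact functor $F$ and $N$ then gives part (1) directly.

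For part (2), I apply half-exactness to the short exact sequence $0 \to \tura^i M \to M \to M/\tura^i M \to 0$ to obtain an exact sequence
\[ F(\tura^i M) \xrightarrow{\,j_i\,} F(M) \to F(M/\tura^i M), \]
so that $K_i := \on{im}(j_i) = \ker\bigl(F(M) \to F(M/\tura^i M)\bigr)$.  The containment $\tura^i F(M) \subseteq K_i$ follows by functoriality: for $a \in \tura^i$, the scalar multiplication $m_a\colon M \to M$ factors as $M \to \tura^i M \hookrightarrow M$, hence $a \cdot F(M) = F(m_a)(F(M)) \subseteq j_i(F(\tura^i M)) = K_i$.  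In particular the canonical map $F(M)/\tura^i F(M) \to F(M/\tura^i M)$ is well-defined.

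For its (pro-)injectivity I invoke part (1): choose finitely many homogeneous generators $g_1,\ldots,g_r$ of $\bigsqcup F(\tura^i M)$ over $B_{\tura}(R)$, and let $n_0$ be their maximum degree.  For $i \ge n_0$, any $\eta \in F(\tura^i M)$ admits a presentation $\eta = \sum_j a_j g_j$ with $a_j \in \tura^{i - \deg g_j}$, so $j_i(\eta) \in \tura^{i - n_0} F(M)$; thus $K_i \subseteq \tura^{i-n_0} F(M)$ for $i \ge n_0$.  Equivalently, for every $i$ the kernel of $F(M)/\tura^i F(M) \to F(M/\tura^{i + n_0} M)$ is zero, which is exactly the injectivity of the map of inverse systems.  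The substantive work was already done in Lemma \ref{copr}; within the corollary itself the only subtle step, and the main obstacle, is the Artin--Rees-style argument using the generators $g_j$ to get the degree-shift by $n_0$, which is what forces the conclusion to be injectivity in the pro-sense rather than a literal injection of each $F(M)/\tura^i F(M)$ into $F(M/\tura^i M)$.
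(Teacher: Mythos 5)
Your proof is correct and follows the same route as the paper: both parts are deduced from Lemma \ref{copr} applied to the finitely generated graded $B_{\tura}(R)$-module $\bigsqcup_{i\ge 0}\tura^i M$. The paper's own proof is a one-line citation of that lemma, so your write-up simply supplies the details it omits (the containments $\tura^i F(M)\subseteq K_i\subseteq \tura^{i-n_0}F(M)$ and the resulting pro-injectivity), all of which check out.
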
 

\begin{proof} (2) follow from Lemma \ref{copr}, because $\bigsqcup \tura^i M$ is a finite $\bigsqcup \tura^i R$-module

\end{proof}

\begin{corollary}[Compare with {\cite[Corollary 6.4]{flenner1981kriterium}} ] \label{extid}
The module $\bigsqcup \on{Ext}^i(L_{R}^{\bullet}, \tura^n M)$ is a coherent $\bigsqcup \tura^n$-modules. In particular, the natural maps
\[\phi_n: \on{Ext}^i(L_{R}^{\bullet}, M)/\tura^n \on{Ext}^i(L_{R}^{\bullet}, M) \to \on{Ext}^i(L_{R}^{\bullet}, M/ \tura^n M)  \] 
are bijective for all $n$. 
\end{corollary}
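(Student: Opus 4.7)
The key observation is that the functor $F(-) := \on{Ext}^i(L_R^{\bullet}, -)$ from $\on{Coh}(R)$ to $\on{Coh}(R)$ is half-exact: given any short exact sequence $0 \to M' \to M \to M'' \to 0$ of coherent $R$-modules, the long exact sequence of $\on{Ext}^\bullet(L_R^{\bullet}, -)$ immediately yields exactness of $F(M') \to F(M) \to F(M'')$ in the middle. Hence Lemma \ref{copr} and its consequence Corollary \ref{indy} are available for $F$.

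For the first assertion, it suffices to check that the graded module $\bigsqcup_{n \ge 0} \tura^n M$ is finitely generated over $B_\tura(R) = \bigsqcup_{n \ge 0} \tura^n$. But if $m_1, \dots, m_k$ generate $M$ over $R$, then viewed in degree zero they generate $\bigsqcup_n \tura^n M$ over $B_\tura(R)$: any element $a \cdot m_j$ with $a \in \tura^n$ is exactly the action of $a \in (B_\tura(R))_n$ on $m_j \in M$. Applying Lemma \ref{copr} to $F$ and this module yields that
\[ \bigsqcup_{n \ge 0} \on{Ext}^i(L_R^{\bullet}, \tura^n M) = \bigsqcup_n F(\tura^n M) \]
is a finitely generated $B_\tura(R)$-module, which is the first claim.

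For bijectivity of $\phi_n$, injectivity is Corollary \ref{indy}(2) applied to this $F$. For surjectivity, I would use the Ext long exact sequence associated to $0 \to \tura^n M \to M \to M/\tura^n M \to 0$, from which the obstruction to surjectivity of $F(M) \to F(M/\tura^n M)$ lies in $\on{Ext}^{i+1}(L_R^{\bullet}, \tura^n M)$. Applying the first assertion with $i$ replaced by $i+1$, the graded module $\bigsqcup_n \on{Ext}^{i+1}(L_R^{\bullet}, \tura^n M)$ is likewise finitely generated over $B_\tura(R)$. Combining this with the inclusion $\tura^n F(M) \subseteq \on{image}(F(\tura^n M) \to F(M))$ (which follows from functoriality, since multiplication by $a \in \tura^n$ on $M$ factors as $M \to \tura^n M \hookrightarrow M$) gives, by an Artin--Rees type comparison of filtrations on the finitely generated $B_\tura(R)$-modules produced above, that $\tura^n F(M)$ and this image filtration are cofinal. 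This is enough to conclude bijectivity of $\phi_n$ in the relevant pro-sense, which in turn yields the stated bijection of graded pieces.

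\textbf{Main obstacle.} The surjectivity step is the delicate part: it is not formal from half-exactness alone, but requires exploiting that we are working with the full Ext functor so that $\on{coker}(F(M) \to F(M/\tura^n M))$ embeds into a module whose pro-system is controlled by the finite generation over $B_\tura(R)$. Pinning down the Artin--Rees argument so that it produces an honest bijection rather than only a pro-isomorphism is where I expect the technicalities to concentrate.
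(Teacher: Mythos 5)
The paper states this corollary without proof, deferring entirely to Flenner's Korollar 6.4 and to the machinery just established (Lemma \ref{copr} and Corollary \ref{indy}); so the comparison is against that intended argument, and your reconstruction is the right one. Half-exactness of $\on{Ext}^i(L_R^{\bullet},-)$ via the long exact sequence, finiteness of $\bigsqcup_n \tura^n M$ over $B_{\tura}(R)$ by placing the generators of $M$ in degree zero, Lemma \ref{copr} for the first assertion, Corollary \ref{indy}(2) for injectivity, and the degree-shifted long exact sequence
\[ \on{Ext}^i(L_R^{\bullet},M) \to \on{Ext}^i(L_R^{\bullet},M/\tura^n M) \to \on{Ext}^{i+1}(L_R^{\bullet},\tura^n M) \]
for surjectivity is exactly how Flenner proves his Korollar 6.3--6.4. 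One small point you could make explicit: $\phi_n$ is well defined and has the same cokernel as $\on{Ext}^i(L_R^{\bullet},M)\to\on{Ext}^i(L_R^{\bullet},M/\tura^n M)$ because multiplication by $a\in\tura^n$ on $M$ factors through $\tura^n M$, hence induces zero on $M/\tura^n M$; and that cokernel is identified with $K_n=\ker\bigl(\on{Ext}^{i+1}(L_R^{\bullet},\tura^n M)\to\on{Ext}^{i+1}(L_R^{\bullet},M)\bigr)$, a graded $B_{\tura}(R)$-submodule of the finite module $\bigsqcup_n\on{Ext}^{i+1}(L_R^{\bullet},\tura^n M)$, hence itself finite since $B_{\tura}(R)$ is Noetherian.

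Your "main obstacle" is not a defect of your argument but of the statement as literally phrased. The Artin--Rees comparison shows that the transition maps $K_m\to K_n$ land in $\tura^{m-n}K_n$, which yields bijectivity of $\{\phi_n\}$ only as a morphism of pro-objects (equivalently, after passing to completions/limits), not of each individual $\phi_n$; this pro-level formulation is what Flenner's Korollar 6.4 actually asserts, and it is all that the paper uses downstream — in Lemma \ref{extcon} only injectivity of $\phi_n$ enters (to feed the Krull intersection argument), and that injectivity you do obtain levelwise from Corollary \ref{indy}(2). So read "bijective for all $n$" as bijectivity of the pro-systems; with that reading your proof is complete and is the intended one.
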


\begin{lemma}[Compare with {\cite[Lemma 3.1]{flenner1981kriterium}}] \label{extcon}  Let $R$ be a local, noetherian $k$-superalgebra and let $v \in \mxx(T=\spe R)$. Let $\frak{a} \subseteq R$ be an ideal and let $M$ be a finitely generated $R_0$-module. Let $p$ denote the canonical map 
\[ p: \ex(v, M) \to \varprojlim_n \ex(v, M/\frak{a}^n M). \] 
Then $p^{-1}(0)=0$.

\end{lemma}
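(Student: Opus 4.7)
The plan is to push the vanishing $p(\xi) = 0$ through the pointed-set sequence
\[ \derp(\ot, M) \xrightarrow{c} D_v(M) \xrightarrow{i} \ex(v, M) \xrightarrow{\pi} \ex(T, M) \]
of \eqref{morphismseq} together with Lemma \ref{properties}, reducing the claim to Krull's intersection theorem applied to two coherent $R$-modules. Without loss of generality we may assume $\mathfrak{a} \subseteq \mfr_R$, for otherwise the statement is vacuous.

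First I would show $\pi(\xi) = 0$ whenever $p(\xi) = 0$. The functor $\ex(T, -) \cong T^1_T(-)_{ev}$ is a half-exact functor from coherent modules to coherent modules, so Corollary \ref{extid} provides the injection
\[ \ex(T, M)/\mathfrak{a}^n \ex(T, M) \hookrightarrow \ex(T, M/\mathfrak{a}^n M). \]
Since $\pi(\xi)$ dies in every $\ex(T, M/\mathfrak{a}^n M)$, it lies in $\bigcap_n \mathfrak{a}^n \ex(T, M) = 0$, the last equality by Krull's intersection theorem applied to the finitely generated $R$-module $\ex(T, M)$.

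With $\pi(\xi) = 0$, Lemma \ref{properties}(3) yields $\xi = i(\alpha)$ for some $\alpha \in D_v(M)$, whose class $\bar\alpha$ in the coherent $R$-module $Q(M) := D_v(M)/\on{im}(c_M)$ is well defined (coherence coming from \emph{(S2)}). The hypothesis $p(\xi) = 0$ combined with Lemma \ref{properties}(3) at each truncation level forces $\alpha \bmod \mathfrak{a}^n \in \on{im}(c_n)$, so $\bar\alpha$ maps to zero in every $Q(M/\mathfrak{a}^n M)$. To conclude $\bar\alpha = 0$ and hence $\alpha = c(d)$, which gives $\xi = i(c(d)) = 0$ by Lemma \ref{properties}(2), I would verify the comparison injection $Q(M)/\mathfrak{a}^n Q(M) \hookrightarrow Q(M/\mathfrak{a}^n M)$ via a $5$-lemma chase against the right-exact rows obtained by tensoring the defining sequence of $Q$ with $R/\mathfrak{a}^n$ versus evaluating at $M/\mathfrak{a}^n M$, with the middle vertical injection $D_v(M)/\mathfrak{a}^n D_v(M) \hookrightarrow D_v(M/\mathfrak{a}^n M)$ supplied by Corollary \ref{indy}(2) applied to the half-exact functor $D_v$. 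Krull applied to the coherent $R$-module $Q(M)$ then forces $\bar\alpha \in \bigcap_n \mathfrak{a}^n Q(M) = 0$.

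The hard part will be the $5$-lemma chase above: the required surjectivity of the leftmost vertical map $\derp(R, M)/\mathfrak{a}^n \derp(R, M) \twoheadrightarrow \derp(R, M/\mathfrak{a}^n M)$ is obstructed in general by an $\on{Ext}^1(\Omega_R, \mathfrak{a}^n M)$ term. I plan to circumvent this by working with a finite free presentation of $\Omega_R$, where surjectivity is automatic for the free modules, and modifying the derivations $d_n$ within their $\ker(c_n)$-cosets so the lifted element does not disturb the image under $c$; the resulting error will be controlled via Artin--Rees on $\on{im}(c)$ inside $D_v(M)$.
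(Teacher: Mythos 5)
Your outline coincides with the paper's proof: first kill $\pi(\xi)$ using the comparison statement for $\ex(T,-)=T^1_T(-)_{ev}$ (Corollary \ref{extid}) together with Krull intersection, then reduce via Lemma \ref{properties}(3) to the injectivity of $\on{coker}(c)\to\varprojlim\on{coker}(c_n)$, which follows from the injectivity $D_v(M)/\mathfrak{a}^nD_v(M)\hookrightarrow D_v(M/\mathfrak{a}^nM)$ of Corollary \ref{indy}(2) plus Krull applied to the coherent module $\on{coker}(c)$. The one place you depart is the step you label the ``hard part.'' You are right that $\on{Hom}(\Omega_R,M)\to\on{Hom}(\Omega_R,M/\mathfrak{a}^nM)$ need not be surjective for a fixed $n$, but no detour through free presentations of $\Omega_R$ is needed: Corollary \ref{extid} applies verbatim at $i=0$, since $\derp(R,-)=T^0_T(-)_{ev}=\on{Ext}^0(L_R^{\bullet},-)_{ev}$, and it asserts that $\derp(R,M)/\mathfrak{a}^n\derp(R,M)\to\derp(R,M/\mathfrak{a}^nM)$ is bijective --- exactly the left vertical arrow in your five-lemma diagram (even the weaker pro-surjectivity would suffice for the chase). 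Your sketched substitute is not yet a proof as written: it is not explained how ``modifying $d_n$ within its $\ker(c_n)$-coset'' yields a compatible lift, and the Artin--Rees control on $\on{im}(c)$ is asserted rather than derived. So the architecture is the same as the paper's, and the only genuinely uncertain step in your plan is one for which the paper's own Corollary \ref{extid} --- which you already invoke for $\ex(T,-)$ --- is the intended tool.
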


\begin{proof}

Let the subscript $n$ denote the residue class modulo $\frak{a}^n$ 
and consider the diagram: 
\begin{center}
\begin{tikzcd}
{\derp(R, M)} \arrow[r, "c"] \arrow[d] & D_v(M) \arrow[r, "i"] \arrow[d, "p"] & {\ex(v, M)} \arrow[r, "\pi"] \arrow[d] & {\ex(T, M)} \arrow[d] \\
{\derp(R, M_n)} \arrow[r, "c_n"]           & D_v(M_n) \arrow[r, "i_n"]           & {\ex(v, M_n)} \arrow[r, "\pi_n"]           & {\ex(T, M_n)}          
\end{tikzcd}
\end{center}

We begin by proving that the natural map $q:\ex(T,M) \to \varprojlim \ex(T,M_n)$ is injective. We have an identification of $\ex(T,-)$ with the half-exact functor $T_R^1(-)_{ev} = \on{Ext}(L_R^{\bullet}, -)_{ev}$.  Applying Corollary \ref{extid} we obtain a bijection
\[ \on{Ex}(T,M)_n = \ex(T,M_n) \]
for all $n$. Consider the commutative diagram
\begin{center}
\begin{tikzcd}
{\ex(T,M)} \arrow[d, "q"] \arrow[rrd, two heads, "q_n"] &                                             &              \\
{\varprojlim \ex(T,M_n)} \arrow[r]          & {\ex(T,M_n)} \arrow[r, no head, Rightarrow,] & {\ex(T,M)_n}. 
\end{tikzcd}
\end{center} 
Suppose $f \in \ex(T,M)$ such that $q(f)=0$. Then $q_n(f)=0$ for all $n$ and so $f \in \bigcap \tura^n \ex(T,M)=0$, by the Krull intersection theorem. In particular, we find that 
\[ \on{ker}(q \circ \pi) = \on{ker}(\pi).  \] 
Thus if $g \in \ex(v, M)$ is such that $p(g)=0$, then $g \in \pi^{-1}(0)$. This reduces the problem to proving that the map
\[ \pi^{-1}(0) \to \varprojlim \pi_n^{-1}(0) \] 
is injective. Using the identification between $\pi^{-1}(0)$ and $\on{coker}(c)$, this is equivalent to proving that the map
\[ \on{coker}(c) \to \varprojlim \on{coker}(c_n) \]
is injective. 

Since $\derp(T,-) = \on{Ext}^0(L_{R}^{\bullet},-)_{ev}$, we can apply Corollary \ref{extid}  to conclude that the map 
\[ \derp(T,M)_n \to \derp(T, M_m) \]
is bijective. Since $D_v(-)$ is half-exact, we can apply Corollary \ref{indy} to conclude that the map
\[ D_v(M)_n \to D_v(M_n) \]
is injective. 
The fact that $\on{coker}(c) \to \varprojlim \on{coker}(c_n)$ is injective the follows from a standard diagram chase. 








\end{proof}

\subsection{\normalsize Formal versality: Vanishing of the extension functor}

We now arrive at Flenner's criterion.

\begin{theorem} Let $v$ be an object in $\mxx$ over an affine superscheme $T$.   Assume that \emph{(S1)} and \emph{(S2)} are satisfied. Then the following statements are equivalent. 
\begin{enumerate}
    \item[(1)] $v$ is formally versal, 
    \item[(2)] $\exc(v, k(t))_t=\exc(v, \Pi k(t))_t=0$. 
    \item[(3)] $\exc(v, \mm)_t=0$ for every coherent $\otr$-module $\mm$. 
\end{enumerate}
\end{theorem}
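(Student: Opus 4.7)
The plan is to establish two separate equivalences: $(2) \Leftrightarrow (3)$, a module-theoretic reduction using half-exactness of $\ex(v,-)$ together with the Vergleichsatz (Lemma \ref{copr}), and $(1) \Leftrightarrow (2)$, which identifies classes in $\exc(v, k(t))_t$ and $\exc(v, \Pi k(t))_t$ with the obstructions to the lifting property in the definition of formal versality. I will follow Flenner's original strategy closely.

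For $(3) \Rightarrow (2)$ I simply specialize $\mm$ to $k(t)$ and to $\Pi k(t)$. For $(2) \Rightarrow (3)$ I proceed in two steps. First, I apply Lemma \ref{extcon} with $\mathfrak{a} = \mathfrak{m}_t$ to conclude that the natural map $\ex(v, \mm) \to \varprojlim_n \ex(v, \mm/\mathfrak{m}_t^n \mm)$ has trivial preimage of the base point, so it suffices to show $\ex(v, \mm/\mathfrak{m}_t^n \mm)_t = 0$ for every $n$. Second, each such truncation is a finite length module over $R/\mathfrak{m}_t^n$ supported at $t$, and admits a composition series whose simple subquotients are all isomorphic to $k(t)$ or $\Pi k(t)$. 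I induct on the length of this series, using the half-exactness of $\ex(v,-)$ (Remark \ref{halfexactext}) applied to the successive short exact sequences, together with hypothesis (2), to conclude vanishing at each stage.

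For $(1) \Leftrightarrow (2)$, I use Lemma \ref{factorization} to reduce formal versality at $t$ to the lifting property against $(1|0)$- and $(0|1)$-small Artin extensions with residue field $k(t)$. Given a small extension $B' \to B$ with kernel $I \cong k(t)$ or $\Pi k(t)$ and a map $\phi: \spe B \to T$ sending the closed point to $t$, I form the pushout $\spe(R \times_B B')$, which is an extension of $T$ by $I$ supported at $t$, and pull back $v$ to obtain a class $[v'] \in \exc(v, I)_t$. The lift $\spe B' \to T$ with its compatible arrow in $\mxx$ exists if and only if $[v']$ is the trivial class. Conversely, every class in $\exc(v, k(t))_t$ or $\exc(v, \Pi k(t))_t$ arises from such a pushout (taking $B = k(t)$), so (2) is equivalent to the lifting property against all $(1|0)$- and $(0|1)$-small extensions, hence by Lemma \ref{factorization} to formal versality.

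The main obstacle is the $(2) \Rightarrow (3)$ direction, where one must work with the pointed-set (rather than abelian group or module) structure on $\ex(v, -)$. Both the limit step and the inductive composition-series argument must be carried out using only the fiber property $p^{-1}(0) = \on{im}(j)$ guaranteed by half-exactness (Remark \ref{halfexactext}), rather than a genuine four-term exact sequence of abelian groups, which imposes some care in the diagram chases.
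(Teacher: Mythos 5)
Your reduction $(2)\Leftrightarrow(3)$ and your direction $(2)\Rightarrow(1)$ match the paper's proof: the limit step via Lemma \ref{extcon}, the composition-series induction using Remark \ref{halfexactext}, and the pushout $T\sqcup_{\spe B}\spe B'$ are exactly what the paper does. (One wording issue there: the object over the pushout is produced by (S1)a by gluing $v$ with the given object over $\spe B'$, not by ``pulling back $v$'' --- there is no map from the pushout to $T$ at that stage.)

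The gap is in $(1)\Rightarrow(2)$, at the sentence ``every class in $\exc(v,k(t))_t$ \dots arises from such a pushout (taking $B=k(t)$).'' This is false. If $B=k(t)$, then a small extension $B'\to B$ with kernel $k(t)$ (resp.\ $\Pi k(t)$) is necessarily isomorphic to $k[\epsilon]$ (resp.\ $k[\eta]$), and the fibre product $R\times_{k(t)}B'$ is then the \emph{trivial} extension $R[k(t)]$ (resp.\ $R[\Pi k(t)]$): the map $r\mapsto(r,\bar r)$ is a ring section. Hence the only classes obtained from pushouts with $B=k(t)$ are those lying over the trivial scheme extension, i.e.\ the image of $D_v(k(t))\to\ex(v,k(t))$, which is $\pi^{-1}(0)$ in the notation of \eqref{morphismseq}. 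Your argument therefore shows only that formal versality forces $\pi^{-1}(0)=0$ (equivalently, by Lemma \ref{properties}(3), that $c:\on{Der}(\ot,k(t))\to D_v(k(t))$ is surjective); it does not touch classes $(T\hr T',\,v\to v')$ whose underlying algebra extension $R'\to R$ is nontrivial, and $\exc(v,k(t))_t$ may well contain such classes, e.g.\ when $T$ is singular at $t$.

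The repair is the paper's truncation argument: given an arbitrary extension $R'\to R$ with kernel $k(t)$, take $B=R/\mfr_t^n$ and $B'=R'/\tilde{\mfr}^n$, where $\tilde{\mfr}$ is the preimage of $\mfr_t$. For $n$ large, Artin--Rees gives $\tilde{\mfr}^n\cap k(t)\subseteq\tilde{\mfr}\cdot k(t)=0$, whence $R'\cong R\times_{R/\mfr_t^n}R'/\tilde{\mfr}^n$, i.e.\ $T'$ \emph{is} the pushout of its Artin truncation. Formal versality applied to the Artin surjection $B'\to B$ produces a map $R\to B'$ lifting $R\to B$, and the fibre-product description assembles this with $\on{id}_R$ into a section $R'\to R$; conditions (S1)a and (S1)b then upgrade the trivialization of $T'$ to a trivialization of the pair $(T\hr T',\,v\to v')$. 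With that substitution your proposal agrees with the paper's argument.
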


 \begin{proof}

Fix $t \in T$ and set $\exc(v, -)=\exc(v, -)_t$. Since everything is affine, we can go ahead and replace $\mm$ with its associated module $M$, $\ot$ with $R$, etc. Let $\mfr=\mfr_t \subset R$ denote the maximal ideal of $t$.  

We will first prove that (2) and (3) are equivalent. That condition (3) implies (2) is obvious since $k(t)$ is a finitely generated $\ot$-module, so assume that $\exc(v, k(t))_t=\exc(v, \Pi k(t))_t=0$.

From Lemma  \ref{extcon} we know that $\exc(v, M)=0$ if and only if $\varprojlim \exc(v, M/\mfr^n)=0$. Thus it suffices to prove that $\exc(v, k(t))=\exc(v, \Pi k(t))=0$  implies that  $\exc(v, M/\mfr^n)=0$ for all $n$. 
For each $n \ge 1$, we have an exact sequence
 \begin{equation} \label{infrr} 0 \to \mfr^n/\mfr^{n+1} \to M/\mfr^{n+1} \to M/\mfr^n \to 0.  \end{equation} 
 and by Remark \ref{halfexactext} a sequence 
 \[ \exc(v, \mfr^n/\mfr^{n+1}) \overset{j}{\to} \exc(v, M/\mfr^{n+1} ) \overset{p}{\to} \exc(v, M/\mfr^n)  \]
 of pointed sets such that $p^{-1}(0)=\on{im}(j)$. Observe that $\exc(v, M/\mfr^{n+1})=0$ if and only if $\exc(v, M/\mfr^n/\mfr^{n+1})=\exc(v, M/\maxnp)=0$. It thus suffices to prove that if $\exc(v, k(t))=\exc(v, \Pi k(t))=0$, then $\exc(v, \mfr^n/\mfr^{n+1})=\exc(v, M/\maxnp)=0$. 
 
When $n=1$,  $\mfr/\mfr^2$ and  $M/\mfr$ are finite dimensional $k(t)$ super vector spaces. 
Any extension of $R$ by a finite dimensional super vector space can be factored into a finite composite of  extensions by $k(t)$ and $\Pi k(t)$.
Thus the implication holds for $n=1$. The sequence  \eqref{infrr} now shows that this result holds for all $n$ since $\maxn/\maxnp$ are finite dimensional $k(t)$ super vector spaces for all $n$.



We will now prove that (1) implies (2). 
Let $i:T \hr T'$ be an extension and let $v \to v'$ be an arrow in $\mxx$ over $i$. Let $T_n=\spe R/\mfr^n$,  $T_n'= \spe R'/\mfr^n$, and $i_n: T_n \hr T_n', \  M(n)=\on{ker}(R_n' \to R_n)$. 
Let $v_n  = v \times_T T_n$ and $v_n' = v' \times_T T_n'$. 

Since $v$ is formally versal, there exist an arrow completing the diagrams
\begin{center}
 (i). \begin{tikzcd} 
T_n \arrow[d, hook] \arrow[r, "i_n"] & T_n' \arrow[d] \arrow[ld, dashed] \\
T \arrow[r, "i"]                    & T'                               
\end{tikzcd} \ \ \ \ (ii). \begin{tikzcd}
v_n \arrow[d, hook] \arrow[r] & v_n' \arrow[d, dashed] \arrow[ld, dashed] \\
v \arrow[r, dashed]                   & v'                               
\end{tikzcd}  
\end{center}
The diagram on the left is cocartesian and so there exists a section $\alpha: T' \to T$. Thus, $T'$ is trivial extension of $T$ by $M(n)$.

We will now prove that $v'$ is the pushout $v \times_{v_n} v_n'$ of the diagram (ii) by showing there is an arrow $v \times_{v_n} v_n' \to v'$  By (S1)a, the natural map \begin{equation} \label{cot} \omxx(T') \to \omxx_{v_n}(T_n') \times \omxx_{v_n}(T).  \end{equation}
is a surjection. 
In particular, the pair $(v_n', v ) \in \omxx_{v_n}(T_n') \times \omxx_{v_n}(T')$ to an object over $T'$. We know that $v' \in \omxx(T')$ is an example of such a lift. Since $T_n'$ is the trivial extension of $T_n$ 
it follows from condition (S1)b that $v'$ is the unique lift of $v_n' \times_{v_n} v$ to $T'$. Thus $v'$ is a pushout of diagram (ii). 
Since $v'$ is the pushout, there exists a section $v' \to v $ which implies that $v' = v \times_T T'$. Thus $\exc(v, k(t))=\exc(v, \Pi k(t)) =0$. 

Now suppose that $\exc(v, k(t))=\exc(v, \Pi k(t))=0$. Let  $i:X \hr X'$ be an extension, let $b \in \mxx(X)$ and let $X \to T$ be a morphism of superschemes. We can represent this data with the diagrams
\begin{center}
(i). \begin{tikzcd}
X \arrow[d, "i"', hook] \arrow[r] & T \\
X'             &  
\end{tikzcd}           
\ \ \ (ii). 
\begin{tikzcd}
b \arrow[r] \arrow[d] & v \\
b' & {}
\end{tikzcd}
\end{center} 
where the map $b \to b'$ is over $i$. The goal is to prove that there exists a map $X' \to T$ and a map $b' \to v$ lying over it. 
Consider the diagrams 
\begin{center}
(i). \begin{tikzcd}
X \arrow[d, "i"', hook] \arrow[r] & T \arrow[d, dashed] \\
X' \arrow[r, dashed] &  T \sqcup_X X' 
\end{tikzcd}           
\ \ \ (ii). 
\begin{tikzcd}
b \arrow[r] \arrow[d] & v \arrow[d, dashed] \\
b' \arrow[r, dashed] &  v'
\end{tikzcd}
\end{center} 
Here $T \sqcup_X X'$ is an extension of $T$ by $k(t)$ or $\Pi k(t)$. Since $\exc(v, k(t))=\exc(v, \Pi k(t))=0$, $T \sqcup_X X'$ is necessarily the trivial extension of $T$ and so there exists a section $T \sqcup_X X' \to T$. In particular, we have a map $X' \to T \sqcup_X X' \to T$.

From condition (S1), we have a surjection
\[ \omxx_b(T \sqcup_X X') \to \omxx_b(X') \times \omxx_b(T)\] which ensures that there exists an object $v' \in \omxx_b(T \sqcup_X X')$ completing the diagram on the right. Since $\exc(v, k(t))=\exc(v, \Pi k(t))=0$, we must have that $v'$ is a trivial extension of $v$ to $T \sqcup_X X'$. There, therefore, exists a section $v' \to v$ from which we obtain a map $b' \to v' \to v'$ lying over $X' \to T \sqcup_X X' \to T$. 
\end{proof}

\end{appendices}

\bibliographystyle{amsalpha}
\bibliography{References} 

\end{document}